\providecommand{\Fourier}{\mathcal{F}}
\renewcommand{\phi}{\varphi}
\DeclareMathOperator{\Log}{Log}
\newcommand{\PP}{\mathbb{P}}
\newcommand{\E}{\mathbb{E}}
\newcommand{\R}{\mathbb{R}}
\newcommand{\N}{\mathbb{N}}
\newcommand{\V}{\mathbb{V}}
\numberwithin{equation}{section}
\newtheorem{theorem}{Theorem}[section]
\newtheorem{lemma}{Lemma}[section]
\renewcommand{\d}{\, \textnormal{d}  }
\renewcommand{\tilde}{\widetilde}
\DeclareMathOperator{\lk}{L}
\renewcommand{\hat}{\widehat}
\providecommand{\ephi}{\hat{\phi} }
\providecommand{\dephi}{\hat{\phi}' }
\providecommand{\dphi}{{\phi}' }
\providecommand{\intm}{\int_{-m}^{m} }
\providecommand{\eps}{\varepsilon}
\begin{document}
\title{An adaptive procedure for Fourier estimators: illustration to deconvolution and decompounding}
\author{C\'eline Duval\thanks{MAP5, UMR CNRS
8145, Universit\'e Paris Descartes.}
 \hspace{0.1cm}and
Johanna Kappus\thanks{Institut f\"{u}r Mathematik, Universit\"{a}t Rostock.}
}
\date{}

\maketitle

\begin{abstract} 
We introduce a new procedure to select the optimal cutoff parameter for Fourier density estimators that leads to adaptive rate optimal estimators, up to a logarithmic factor. This adaptive procedure applies for different inverse problems. We illustrate it on two classical examples: deconvolution and decompounding, i.e. non-parametric estimation of the jump density of a compound Poisson process from the observation of $n$ increments of length $\Delta>0$. For this latter example, we first build an estimator for which we provide an upper bound for its $\lk^2$-risk that is valid simultaneously for sampling rates $\Delta$ that can vanish, $\Delta:=\Delta_{n}\to0$, can be fixed, $\Delta_{n}\to\Delta_{0}>0$ or can get large, $\Delta_{n}\to\infty$ slowly. This last result is new and presents interest on its own. Then, we show that the adaptive procedure we present leads to an adaptive and rate optimal (up to a logarithmic factor) estimator of the jump density. \end{abstract}
\noindent {\sc {\bf Keywords.}} {\small  Adaptive density estimation, deconvolution, decompounding, model selection } \\
\noindent {\sc {\bf AMS Classification.}} 62C12, 62C20, 62G07.

\section{Introduction}

\subsection{Motivation}

In the literature on non-parametric statistics, and in particular in the literature dedicated to building minimax estimators, a lot of space is dedicated
 to adaptive procedures. Adaptivity may be understood as minimax-adaptivity, i.e. optimal rates of convergence are attained simultaneously over a collection of class of densities, for example, over a collection of Sobolev-balls. Adaptivity may also refer to proving non-asymptotic oracle bounds, i.e. having a procedure that mimics, up to a constant, the estimator that minimizes a given loss function. It is this last notion of adaptivity we adopt in this article.

  Achieving adaptivity is a particular model selection issue for which there exist nume\-rous techniques.  Hereafter we mention some of them together with a non exhaustive list of references. Loosely speaking there exist three main approaches; thresholding techniques for wavelet density estimators (see e.g. \cite{donoho1994ideal,donoho1995wavelet,reynaud2011adaptive}), penalized estimators (see e.g. \cite{birge1998minimum,barron1999risk,massart2007concentration,lerasle2012optimal}) and pair wise comparison of estimators such as the Goldenshluger and Lepskii's procedure (see e.g. \cite{lepskii1991problem,lepskii1992asymptotically,goldenshluger2008universal,goldenshluger2011bandwidth,goldenshluger2013general,goldenshluger2014adaptive}). These techniques have been developed in a wide variety of contexts (for instance inverse problems or anisotropic multidimensional density estimation). Recently Lacour et al. \cite{lacour2017estimator} (see also the references therein) have introduced a new adaptive procedure for kernel density estimators, which is a modification of the Goldenshluger and Lepskii's method that has similar theoretical properties but is numerically more efficient.
 All the afore mentioned methods rely on the choice of an hyper parameter that needs to be calibrated, as explained in \cite{lacour2017estimator} numerical performances of the selected estimator  are ``very sensitive to this choice'' and  many studies have been devoted to the calibration of this hyper parameter (see e.g. Baudry et al. \cite{baudry2012slope}).
 Another technique that is popular, especially in numerical studies, is cross validation, which does not rely on the calibration of an hyper parameter. However cross validation does not permit to obtain theoretical results in general, is time-consuming in practice and in  some cases has very poor performances as explained in Delaigle and Gibels \cite{delaigle2004practical}.\\

In the present paper, we  put the stress on an adaptive method that was successfully applied in Duval and Kappus \cite{duval2017nonparametric} and whose scope goes beyond the grouped data setting studied therein. The adaptive procedure presented below does not outperform existing techniques, it suffers a logarithmic loss, but has the advantage of being numerically simple and fast and permits to compute the optimal cutoff parameter for different classical inverse pro\-blems. This method also depends on the calibration of an hyper parameter, on the numerical study it seems that the method shows little sensibility to this parameter. To illustrate it both theoretically and numerically, we focus on two classical inverse problems: deconvolution, the density estimation problem is a particular case, and decompounding. These issues have  raised a great interest in the literature (see the numerous references listed in Sections \ref{sec:exD} and \ref{sec:exP}). For those two inverse problems we provide adaptive estimators that minimizes, up to a multiplicative constant, the upper bound which brings optimal rates of convergence.

In the deconvolution problem, one observes $n$ i.i.d. realizations of $Y_{i}=X_{i}+\eps_{i}$, where $X_{i}$ and $\eps_{i}$ are independent, the density of $\eps_{1}$ is known and the density $f$ of $X_{1}$ is the quantity of interest.  Our study relies on a well studied optimal Fourier estimator and for which the optimal upper bound for the $\lk^{2}$-risk is known. Starting from there, we make explicit our cutoff parameter in this context and show that its $\lk^{2}$-risk minimizes the optimal upper bound up to a logarithmic factor. We conduct an extensive simulation study which illustrates the stability of the procedure and we compare our results with a penalization procedure for which many results have been developed in this context.

In the decompounding problem, one discretely observes one trajectory of a compound Poisson process $Z$,
\[Z_{t}=\sum_{i=j}^{N_{t}}X_{j},\quad t\geq 0,\] where $N$ is a Poisson process with intensity $\lambda$ independent of the i.i.d. random variables $(X_{j})_{j\in\N}$ with common density $f$. Let $\Delta>0$, suppose we observe $(Z_{i\Delta},\ i=1,\ldots,n)$, we aim at estimating $f$ from these observations. The cases $\Delta\to 0$ (high frequency observations) or $\Delta$ being fixed (low frequency observations, often $\Delta=1$) have been broadly studied in the literature (see the references given in Section \ref{sec:exP}) but were considered as separate cases. The case where $\Delta$ grows to infinity has never been studied. Therefore, we first study the problem of estimating the jump density $f$ of a compound Poisson process $Z$ from $(Z_{i\Delta},\ i=1,\ldots,n)$ for general sampling rates $\Delta$. We invert the L\'evy-Kintchine formula, relating the characteristic function of $Z_{\Delta}$ to the one of $X_{1}$. This approach is classical in the decompounding literature. We establish an upper bound for its $\lk^2$-risk where the dependency in $\Delta$ is made explicit. This upper bound is optimal simultaneously for $\Delta:=\Delta_{n}\to\Delta_{0}\in[0,\infty)$, it is also valid for $\Delta_{n}\to\infty$ under additional constraints (see Section \ref{sec:exP}). This result is new and presents interest on its own. The dependency in $\Delta_{0}$ of the upper bound shows a deterioration as $\Delta_{0}$ increases, which is expected. But, we identify regimes where $\Delta_{n}\to\infty$ such that $\Delta_{n}<1/4\log(n\Delta_{n})$ as $n\to\infty$ and where the estimator remains consistent, and presumably rate optimal.  Heuristically, if the jump density has finite variance, for simplicity assume $f$ is centered with unit variance, then, the law of each increment can be approximated as follows $X_{\Delta}=\sqrt{\lambda\Delta}\zeta_{\Delta},$ where $\zeta_{\Delta}\to\mathcal{N}(0,1)$ as $\Delta\to\infty$.   Therefore, one would expect that in these regimes  non-parametric estimation is impossible as each increment is close, in law, to a parametric Gaussian variable.  When $\Delta$ goes too rapidly to infinity, namely as a power of $n\Delta_{n}$, Duval \cite{duval2014no} shows that consistent non-parametric estimation of $f$ is impossible, regardless of the choice of the loss function, by showing that it is always possible to build two different compound Poisson processes with different jump densities for which the statistical experiments generated by their increments are asymptotically equivalent. The results of the present paper complement the knowledge on decompounding. Finally, we show that our adaptive procedure leads to an adaptive and rate optimal estimator of the jump density $f$, up to a logarithmic loss, for all sampling rates such that $\Delta_{n}<1/4\log(n\Delta_{n})$ as $n\to\infty$, this condition is fulfilled for fixed or vanishing $\Delta$. \\

 The article is organized as follows. In the remaining of this Section we describe the  idea of our adaptive procedure. In Section \ref{sec:exD} we illustrate, both theoretically and numerically, its performances for the deconvolution problem. In the numerical part we compare our procedure with a penalized adaptive optimal estimator. Section \ref{sec:exP} is dedicated to the decompounding problem. Finally, Section \ref{sec:proof} gathers the proofs of the main results.

\subsection{Methodology}

\paragraph{Notations.}
We introduce  some notations which are used throughout the rest of the text. Given a random variable $Z$,
$\phi_Z(u)=\E[e^{iu Z} ]$ denotes the characteristic function of $Z$. For $f\in \lk^1(\R)$, $\Fourier f(u) =\int 
e^{iux } f(x) \d x$ is understood to be the Fourier transform of $f$.  Moreover, we denote by $\| \cdot  \|$ the $\lk^2$-norm of functions, $\| f\|^2:= \int |f(x)|^2 \d x$.  Given some function $f\in \lk^1(\R) \cap \lk^2(\R)$, we denote by $f_m$ the  uniquely defined function with Fourier transform $\Fourier f_m= (\Fourier f)\mathds{1}_{[-m,m]}$.

\paragraph{Statistical setting.}

Consider $n$ i.i.d. realizations $Y_j,\, j=1,...,n$ of a random variable $Y$ with Lebesgue-density  $f_{Y}$. Suppose $Y$ is related to a variable $X$, with Lebesgue-density $f$ through a known transformation $\mathbf{T}$ relating their characteristic functions: $\phi_{Y}=\mathbf{T}(\phi_{X})$, where $\mathbf{T}$ can be linear (e.g. deconvolution) or not (grouped data (see \cite{meister2007optimal,duval2017nonparametric}), decompounding). We are interested in estimating the density $f$ of $X$ from the $(Y_j)$. 

We focus on estimators based on Fourier methods, which is convenient for several classes of inverse problems as the convolution operation corresponds to multiplication.
If the transformation $\mathbf{T}$ admits a continuous inverse, we build an estimator $\hat\phi_{X,n}$ of  $\phi_{X}$ from the observations $(Y_{1},\ldots,Y_{n})$,
\[\hat\phi_{X,n}(u)=\mathbf{T}^{-1}\big(\hat\phi_{Y,n}(u)\big)\quad\mbox{where}\ \hat\phi_{Y,n}(u):=\frac{1}{n}\sum_{j=1}^{n}e^{iuY_{j}},\ u\in\R.\]
Cutting off in the spectral domain and applying Fourier inversion gives  an estimator of $f$
\[
\hat f_{m}(x)=\frac{1}{2\pi}\int_{- m}^{ m}e^{-iux}\hat\phi_{X,n}(u)\d u,\quad \forall m>0,\ x\in\R.
\]
The performance of the estimator is measured with $\lk^2$-loss. The choice of the cutoff parameter is crucial: the goal is to select $m$ that mimics the optimal cutoff $m^{\star}$ which minimizes the $\lk^{2}$-risk,
\begin{align}\label{eq:mUB}
\E\big[\|\hat f_{m^{\star}}-f\|^{2}\big]=\underset{m\geq 0}{\inf}\Big\{\frac{1}{2\pi}\int_{[-m,m]^{c}}|\phi_{X}(u)|^{2}\d u+\frac{1}{2\pi}\int_{-m}^{m}\E\big[|\hat \phi_{X,n}(u)-\phi_{X}(u)|^{2}\big]\d u\Big\}.\end{align}
 This optimal value $m^{*}$ usually depends on the unknown regularity of $f$ and is hence not feasible. We propose a procedure to select a  random cutoff $\hat m_{n}$,  which can be calculated from the observations, and for which  the  $\lk^2$-risk is close to the one of $\hat f_{m^{*}}$, meaning that we can establish an oracle bound
\[\E\big[\|\hat f_{\hat m_{n}}-f\|^{2}\big]\leq C \underset{m\geq 0}{\inf}\Big\{\frac{1}{2\pi}\int_{[-m,m]^{c}}|\phi_{X}(u)|^{2}\d u+\frac{1}{2\pi}\int_{-m}^{m}\E\big[|\hat \phi_{X,n}(u)-\phi_{X}(u)|^{2}\big]\d u\Big\}+r_{n},\] for a positive constant $C$ and $r_{n}$ a negligible remainder. We call  $\hat f_{\hat m_{n}}$ adaptive rate optimal estimator of $f$.

 \paragraph{Heuristic of the adaptive procedure.} If $\mathbf{T}^{-1}$ is differentiable and if we can show that for some positive constant $C$, $\E\big[|\hat \phi_{X,n}(u)-\phi_{X}(u)|^{2}\big]\leq C |(\mathbf{T}^{-1})^{\prime}\big(\phi_{Y}(u)\big)|^{2}\E\big[|\hat \phi_{Y,n}(u)-\phi_{Y}(u)|^{2}\big]$, we have, 
 \begin{align}\label{eq:UBintro}
\E\big[\|\hat f_{m}-f\|^{2}\big]\leq \frac{1}{2\pi}\int_{[-m,m]^{c}}|\phi_{X}(u)|^{2}\d u+\frac{C}{n}\int_{-m}^{m}\big|(\mathbf{T}^{-1})^{\prime}\big(\phi_{Y}(u)\big)\big|^{2}\d u,\quad m\geq 0.\end{align}
  The quantity $(\mathbf{T}^{-1})^{\prime}$ is explicit in the grouped data setting (see \cite{meister2007optimal,duval2017nonparametric}), but also in the deconvolution and decompounding cases (see e.g. Sections \ref{sec:exD} and \ref{sec:exP} hereafter). The second term in the right hand side of the latter inequality is a majorant of the integrated variance of the estimator; using a majorant of the variance term is the starting point of many adaptive procedures such as penalized procedures or the Goldenshluger and Lepskii's procedure, where from this majorant, one tries to find a cutoff such that the empirical bias and the majorant are of the same order. 

Denote by $\overline{m}_{n}$ the arginf of the right hand side of \eqref{eq:UBintro}. If the upper bound \eqref{eq:UBintro} is optimal, meaning that it has the same order as \eqref{eq:mUB}, then asymptotically it holds that $m^{\star}\asymp \overline{m}_{n}.$ Differentiating in $m$ the right hand side of \eqref{eq:UBintro} gives that the cutoff $\overline{m}_{n}$ that minimizes this quantity satisfies
\begin{align}\label{eq:mheur}|\phi_{X}(\overline{m}_{n})|^{2}=\frac{C}{n}\big|(\mathbf{T}^{-1})^{\prime}\big(\phi_{Y}(\overline{m}_{n})\big)\big|^{2}\Longleftrightarrow|\mathbf{T}(\phi_{Y}(\overline{m}_{n}))|^{2}=\frac{C}{n}\big|(\mathbf{T}^{-1})^{\prime}\big(\phi_{Y}(\overline{m}_{n})\big)\big|^{2}.\end{align}Clearly, \eqref{eq:mheur} has an empirical version and it is tempting to select $\hat{m}_{n}$ accordingly. This inspires to select the cutoff parameter in the following ensemble, 
\begin{align}\label{eq:mheuremp}\hat m_{n}\in \Big\{m>0, \Big|\frac{\mathbf{T}(\hat\phi_{Y,n}(m))}{\big(\mathbf{T}^{-1})^{\prime}(\hat\phi_{Y,n}(m))}\Big|= \frac{1}{\sqrt{n}}\big(1+\kappa\sqrt{\log n}\big)\Big\}\wedge n,\end{align}
 for some $\kappa>0$. However, the solution of \eqref{eq:mheur} may not be unique; but considering the minimum (as in \cite{duval2017nonparametric} where the density of interest also plays the role of the noise) or the maximum of this ensemble (as in the sequel), it is uniquely determined. Many adaptive procedures such as penalization methods minimizes an empirical version of the upper bound \eqref{eq:UBintro} whereas the spirit of \eqref{eq:mheuremp} consists in finding the zeroes of an empirical version of the derivative in $m$  of the upper bound \eqref{eq:UBintro}. Roughly speaking, the difference between our procedure and a penalization procedure is the same as the difference between Z-estimators and M-estimators.

The quantity involved in the definition of $\hat m_{n}$ also appears in the definition of the estimator. This explains why the procedure  performs numerically fast. It has proven to be asymptotically minimax (up to a logarithmic loss) in the grouped data setting and we show that it also leads to adaptive rate optimal estimators  (up to a logarithmic loss) in deconvolution and decompounding inverse problems.

\section{Deconvolution\label{sec:exD}}

\subsection{Statistical setting}

Suppose that $X_1,\dots, X_n$ are i.i.d. with density $f$ and are accessible through the noisy observations 
\[
Y_j = X_j +\eps_j, \ j=1, \ldots, n.
\]
Assume that the  $(\eps_j)$ are i.i.d., independent of the $(X_j)$ and such  that $\forall u\in \R,\ \phi_{\eps}(u)\ne 0$.  Suppose that the distribution of $\eps_1$ is known. This last assumption can be softened, the procedure allows a straightforward generalization to the case where the distribution of $\eps_1$ can be estimated from an additional sample, see Neumann \cite{MR1460203}.

A deconvolution estimator of the characteristic function $\phi_X$ of $X$ is given by 
\begin{align*}
\frac{\hat\phi_{Y,n}(u)}{\phi_{\eps}(u)},\quad \mbox{with }\ \hat\phi_{Y,n}(u):=\frac1n {\sum_{j=1}^{n} e^{iuY_j}  } ,\quad u\in\R, 
\end{align*}
denoting the empirical characteristic function.  Since $\phi_X$ is a characteristic function, its absolute value is bounded by 1 and the estimator can hence be improved by using the definition 
\begin{align}\label{eq:hatphiD}
\hat{\phi}_{X,n}(u): =\frac{\hat\phi_{Y,n}(u)}{\phi_{\eps}(u)} \frac1 { \max\{1,\big |\frac{\hat\phi_{Y,n}(u)}{\phi_{\eps}(u)}\big|\}  },\quad u\in\R.
\end{align}
Cutting off in the spectral domain and applying Fourier inversion gives  the estimator
\begin{align}\label{eq:fhatD}
\hat{f}_m(x)  = \frac{1}{2\pi} \int_{-m}^{m}e^{- iux} \hat{\phi}_{X,n}(u) \d u,\quad x\in\R. 
\end{align} 
This estimator and adaptation techniques have been extensively studied in the literature, including  in more general settings than above. Optimal rates of convergence and adaptive procedures are well known if $d=1$ (see {e.g.}  \cite{carroll1988optimal,stefanski1990rates,stefanski1990deconvolving,fan1991optimal,butucea2004deconvolution,MR2354572,MR2742504,pensky1999adaptive,MR2416478} for $\lk^2$-loss functions or \cite{lounici2011uniform} for the $\lk^{\infty}$-loss). Results have also been established for multivariate anisotropic densities (see {e.g.} \cite{comte2013anisotropic} for $\lk^2$-loss functions or \cite{rebelles2016structural} for $\lk^{p}$-loss functions, $p\in [1,\infty]$). Deconvolution with unknown error distribution has also been studied (see {e.g.}  \cite{MR1460203,MR2396811,johannes2009deconvolution,zbMATH05358925}, if an additional error sample is available, or \cite{comte2011data,delattre2012blockwise,johannes2013adaptive,comte2015density,kappus2014adaptive} under other set of assumptions).

\subsection{Risk bounds and adaptive bandwidth selection}
The following risk bound is well known in the literature on deconvolution. 
\begin{align}
\hspace{-0.3cm}\E\big[\|\hat f_{m}-f\|^{2}\big]\leq&  \|f-f_{m}\|^{2} +  \frac{1}{2\pi n}  \int_{-m}^{m}  \limits
\frac{  \d u}{|\phi_\eps(u)|^2}=\frac{1}{2\pi}\Big(\hspace{-0.3cm}\int_{[-m,m]^{c}}\limits\hspace{-0.3cm}|\phi_{X}(u)|^{2}\d u+\frac{1}{n}\int_{-m}^{m}\limits\frac{\d u}{|\phi_{\eps}(u)|^{2}}\Big). \label{eq:UBD}
\end{align}This upper bound is the sum of a bias term that decreases with $m$ and a variance term, increasing with $m$.
We select $\overline m_{n}$, the optimal cutoff parameter, such that the upper bound \eqref{eq:UBD} is minimal
\[\overline m_{n}\in\underset{m\geq0}{\mbox{arginf}}\Big\{\hspace{-0.3cm}\int_{[-m,m]^{c}}\limits\hspace{-0.3cm}|\phi_{X}(u)|^{2}\d u+\frac{1}{n}\int_{-m}^{m}\limits\frac{\d u}{|\phi_{\eps}(u)|^{2}}\Big\}.
\]
At $\overline m_{n}$ both terms in \eqref{eq:UBD} are of the same order which gives the optimal cutoff. Differentiating the right hand side with respect to  $m$, we find that the following holds for the optimal cutoff parameter:
\begin{align}\label{eq:equalD}
|\phi_{X}(\overline m_{n})|^{2}= \frac{1}{n|\phi_{\eps}(\overline m_{n} )|^{2}}\quad \Longleftrightarrow\quad |\phi_{X}(\overline m_{n})\phi_{\eps}(\overline m_{n})|=|\phi_{Y}(\overline m_{n})|=\frac{1}{\sqrt{ n}}.
\end{align}
This equality has an empirical version and we select $\hat{m}_{n}$ accordingly.  In order to ensure adaptivity the following heuristic consideration is helpful. When the characteristic function is replaced by its empirical version, the standard deviation is of the order $n^{-1/2}$.  Consequently, estimating $\phi_Y$ by $\hat{\phi}_{Y,n}$ makes sense for $|\phi_Y|\geq n^{-1/2}$.  If $|\phi_Y|<n^{-1/2}$, the noise is dominant so the estimator might be set to zero. 
This inspires to re-define the estimator of $\phi_Y$ as follows: 
\[
\tilde{\phi}_{Y,n}(u)=\hat{\phi}_{Y,n}(u) \mathds{1}_{\{|\hat{\phi}_{Y,n}(u)|\geq {\kappa}_n n^{-1/2}\}},\quad u\in\R,
\]
with the threshold value ${\kappa}_n:= (1 +\kappa \sqrt{\log n}  )$. The constant $\kappa>0$ is specified below.  Then, the estimator of $f$ given in \eqref{eq:fhatD} is modified using the following instead of \eqref{eq:hatphiD}\[
\tilde{\phi}_{X,n}(u): =\frac{\tilde\phi_{Y,n}(u)}{\phi_{\eps}(u)} \frac1 { \max\{1,\big |\frac{\tilde\phi_{Y,n}(u)}{\phi_{\eps}(u)}\big|\}  },\quad u\in\R
\]  and \[
\tilde{f}_m(x)  = \frac{1}{2\pi} \int_{-m}^{m}e^{- iux} \tilde{\phi}_{X,n}(u) \d u,\quad x\in\R. 
\] Note that the upper bound \eqref{eq:UBD} remains valid for the estimator $\tilde f_{m}$, thanks to the result:
\begin{lemma}
\label{lem:tildehat}Let $z=re^{i\theta}$, with $r\leq 1$, $\hat z=\rho e^{iw}$, $\rho> 1$, and $\tilde z=e^{iw}$. Then, $|\tilde z-z|\leq|\hat z-z|.$
\end{lemma}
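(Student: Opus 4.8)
The plan is to reduce the inequality to a direct computation of squared moduli. Writing $w-\theta$ for the angular difference and using the elementary identity $|ae^{i\alpha}-be^{i\beta}|^{2}=a^{2}+b^{2}-2ab\cos(\alpha-\beta)$, I would compute
\[
|\hat z - z|^{2}=\rho^{2}+r^{2}-2\rho r\cos(w-\theta),\qquad |\tilde z - z|^{2}=1+r^{2}-2r\cos(w-\theta).
\]
Subtracting the second from the first, the $r^{2}$ terms cancel and the remaining expression factors:
\[
|\hat z - z|^{2}-|\tilde z - z|^{2}=(\rho^{2}-1)-2r(\rho-1)\cos(w-\theta)=(\rho-1)\big(\rho+1-2r\cos(w-\theta)\big).
\]

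It then remains to check that both factors on the right are nonnegative. Since $\rho>1$ we have $\rho-1>0$; and since $r\leq 1$ and $\cos(w-\theta)\leq 1$, the second factor satisfies $\rho+1-2r\cos(w-\theta)\geq \rho+1-2r\geq \rho-1>0$. Hence $|\hat z - z|^{2}-|\tilde z - z|^{2}\geq 0$, and taking square roots gives the claim.

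I do not expect any real obstacle here: the statement is simply the one-dimensional instance of the fact that the nearest-point projection onto the closed unit disk (a convex set) is $1$-Lipschitz, with $\tilde z$ the projection of $\hat z$ and $z$ its own projection, so one could alternatively invoke that general principle. I would keep the explicit computation above, since it is short and self-contained, and at most mention the geometric interpretation in passing.
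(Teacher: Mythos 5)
Your computation is correct: the difference of squared moduli indeed factors as $(\rho-1)\bigl(\rho+1-2r\cos(w-\theta)\bigr)$, and both factors are nonnegative under the hypotheses $\rho>1$, $0\le r\le 1$, $\cos(w-\theta)\le 1$. The paper itself states Lemma \ref{lem:tildehat} without proof, so there is no argument to compare against; your short explicit verification (or equivalently the observation that $\tilde z$ is the projection of $\hat z$ onto the closed unit disk and projection onto a convex set is $1$-Lipschitz, $z$ being its own projection) is exactly the kind of justification the paper implicitly relies on.
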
 We define the empirical cutoff parameter $\hat m_{n}$ as follows. Since $\hat \phi_{Y,n}$ may show an oscillatory behavior and the solution of \eqref{eq:equalD} may not be unique, we consider 
\begin{align}\label{eq:mD}
\hat{m}_{n}=\max\Big\{m>0: |\hat{\phi}_{Y,n}(m) |  ={\kappa}_n n^{-1/2}\Big\}\wedge n^\alpha,
\end{align}
for some $\alpha\in(0,1]$. It is worth emphasizing that the calculation of $\hat{m}_{n}$ does only rely on the empirical characteristic function $\hat{\phi}_{Y,n}$ and does  not require the evaluation of penalty terms  depending  on the (perhaps unknown) $\phi_\eps$.

\begin{theorem}\label{thm:AD}
Let $\hat m_{n}$ defined as in \eqref{eq:mD}, with $ \kappa > 0$ and $\alpha\in(0,1]$. Then, 
there exist a positive constant $C_1$ depending only on the choice of $\kappa$ and a universal positive constants $C_2$  such that 
\[
\E[  \|f - \tilde{f}_{\hat m_{n}}\|^2 ]  \leq C_1\underset{m\in[0,n^{\alpha}]}{\inf}\Big\{\int_{[-m,m]^{c}}\limits\hspace{-0.3cm}|\phi_{X}(u)|^{2}\d u+\frac{\log n}{n}\int_{-m}^{m}\frac{\d u}{|\phi_{\eps}(u)|^{2}}\Big\}+ C_2 n^{\alpha-\kappa^{2}/2 }. 
\]
\end{theorem}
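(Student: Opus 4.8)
The strategy is the standard oracle-inequality decomposition, but adapted to the fact that the random cutoff $\hat m_n$ is defined through a level crossing of $|\hat\phi_{Y,n}|$ rather than through a penalized criterion. The starting point is the elementary bound, valid for any (possibly random) $m\le n^\alpha$,
\[
\|f-\tilde f_m\|^2 \le 2\|f-f_{m^\star}\|^2 + 2\|f_{m^\star}-\tilde f_m\|^2,
\]
where $m^\star=\overline m_n\wedge n^\alpha$ is the deterministic oracle cutoff minimizing the right-hand side of \eqref{eq:UBD} over $[0,n^\alpha]$. The first term is already the bias part of the oracle bound. For the second term one uses Parseval and splits the integral over $[-m^\star,m^\star]$ and its complement, treating separately the events $\{\hat m_n<m^\star\}$ and $\{\hat m_n\ge m^\star\}$; on each event one controls $\Fourier\tilde f_m-\Fourier f_{m^\star}$ by a combination of the deconvolution stochastic error $(\hat\phi_{Y,n}-\phi_Y)/\phi_\eps$ on the overlap of the two frequency windows and of $\phi_X$ itself on the symmetric difference of the windows. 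Lemma \ref{lem:tildehat} is what guarantees that the thresholded/projected estimator $\tilde\phi_{X,n}$ is no further from $\phi_X$ than the raw ratio, so that the usual variance bound $\int_{-m}^m|\phi_\eps|^{-2}$ is not degraded.

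The heart of the argument is to show that $\hat m_n$ concentrates around $\overline m_n$, i.e. that on a large-probability event $\hat m_n$ lies in a band $[\overline m_n(1-o(1)),\,\overline m_n(1+o(1))]$ (or at least that the risk incurred by its deviations is controlled). Because $\hat m_n$ is defined by $|\hat\phi_{Y,n}(\hat m_n)|=\kappa_n n^{-1/2}$ while $\overline m_n$ satisfies $|\phi_Y(\overline m_n)|=n^{-1/2}$ (equation \eqref{eq:equalD}), one needs a uniform-in-$u$ deviation bound for the empirical characteristic function: there is a classical result (Neumann-type, or a direct chaining / Bernstein argument on the process $u\mapsto \hat\phi_{Y,n}(u)-\phi_Y(u)$) giving
\[
\PP\Big(\sup_{|u|\le n^\alpha}|\hat\phi_{Y,n}(u)-\phi_Y(u)| > \kappa\sqrt{\tfrac{\log n}{n}}\Big)\le C\, n^{-\kappa^2/2}.
\]
It is exactly this event whose complement contributes the remainder term $C_2 n^{\alpha-\kappa^2/2}$ (the $n^\alpha$ factor coming from the trivial bound $\|\tilde f_m - f\|^2\lesssim \int_{-n^\alpha}^{n^\alpha}1\,du \lesssim n^\alpha$, using $|\tilde\phi_{X,n}|\le 1$, on that bad event). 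On the good event, the crossing definition of $\hat m_n$ together with the threshold factor $\kappa_n=1+\kappa\sqrt{\log n}$ forces $|\phi_Y(\hat m_n)|$ to be comparable, up to constants and $\sqrt{\log n}$ factors, to $n^{-1/2}$, hence $\hat m_n$ is comparable to $\overline m_n$; the extra $\log n$ in front of the variance term in the statement is precisely the price of replacing $n^{-1/2}$ by $\kappa_n n^{-1/2}$.

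Once the localization of $\hat m_n$ is in hand, one finishes by: (i) on $\{\hat m_n\le \overline m_n\}$, bounding the extra bias $\int_{\hat m_n}^{\overline m_n}|\phi_X|^2$ by using that at $\hat m_n$ one has $|\phi_Y(\hat m_n)|\asymp \kappa_n n^{-1/2}$, i.e. $|\phi_X(\hat m_n)|^2\asymp \kappa_n^2 (n|\phi_\eps(\hat m_n)|^2)^{-1}$, which matches the variance rate up to the $\log n$ factor and monotonicity of $|\phi_X|$-type quantities; (ii) on $\{\hat m_n\ge \overline m_n\}$, bounding the extra variance $\frac1n\int_{\overline m_n}^{\hat m_n}|\phi_\eps|^{-2}$ similarly, since on the good event $|\hat\phi_{Y,n}(u)|\ge \kappa_n n^{-1/2}$ for all $u\le \hat m_n$ forces $|\phi_\eps(u)|^{-2}\lesssim \log n\cdot n|\phi_X(u)|^2 \cdot (\text{const})$ there. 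Summing these contributions reconstructs $C_1\inf_{m\le n^\alpha}\{\int_{[-m,m]^c}|\phi_X|^2 + \frac{\log n}{n}\int_{-m}^m|\phi_\eps|^{-2}\}$. The main obstacle is the two-sided control of $\hat m_n$ versus $\overline m_n$: the function $m\mapsto|\phi_Y(m)|$ need not be monotone, so one cannot simply invert it, and care is needed because $\hat m_n$ is defined as the \emph{largest} crossing point — one must argue that no spurious large crossing occurs on the good event, which again reduces to the uniform deviation inequality above combined with the fact that $|\phi_Y(u)|$ is eventually below $n^{-1/2}$ in the relevant range (or is cut at $n^\alpha$).
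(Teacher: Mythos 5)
Your plan deviates from the paper's proof in two essential places, and both are genuine gaps rather than routine details. First, the uniform deviation bound $\PP\big(\sup_{|u|\le n^\alpha}|\hat\phi_{Y,n}(u)-\phi_Y(u)|>\kappa\sqrt{\log n/n}\big)\le Cn^{-\kappa^2/2}$ is not available under the theorem's hypotheses: Hoeffding gives only a pointwise bound, and upgrading it to a supremum requires a discretization/chaining step that (i) needs control of the modulus of continuity of $\hat\phi_{Y,n}$, hence moment assumptions on $Y$ that are not made (the paper explicitly treats Cauchy $X$ and Cauchy $\eps$), and (ii) after a union bound over a grid of $[-n^\alpha,n^\alpha]$ produces an extra polynomial factor, so the remainder would no longer be $C_2n^{\alpha-\kappa^2/2}$ with a universal $C_2$. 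The paper never needs a sup-bound: it keeps Hoeffding pointwise and simply integrates the probability over $|u|\le n^\alpha$, which is exactly where the factor $n^\alpha$ in the remainder comes from.

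Second, the localization step ``$|\phi_Y(\hat m_n)|\asymp\kappa_n n^{-1/2}$ hence $\hat m_n\asymp\overline m_n$'' does not follow, precisely because $m\mapsto|\phi_Y(m)|$ need not be monotone (you flag this but do not resolve it); moreover your step (ii) uses that $|\hat\phi_{Y,n}(u)|\ge\kappa_n n^{-1/2}$ for all $u\le\hat m_n$, which is false since $\hat m_n$ is only the \emph{largest} crossing and $|\hat\phi_{Y,n}|$ may dip below the threshold before it, and step (i) invokes a ``monotonicity of $|\phi_X|$-type quantities'' that does not exist. The paper's argument avoids any comparison of $\hat m_n$ with $\overline m_n$ altogether: for an arbitrary fixed $m\le n^\alpha$ it works on the two events $\{\hat m_n<m\}$ and $\{\hat m_n\ge m\}$. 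On the first, the surplus bias is controlled by $|\phi_X|^2\le 2|\hat\phi_{Y,n}|^2/|\phi_\eps|^2+2|\hat\phi_{Y,n}-\phi_Y|^2/|\phi_\eps|^2$ together with $|\hat\phi_{Y,n}(u)|\le\kappa_n n^{-1/2}$ beyond the last crossing, yielding the $(\log n/n)\int_{-m}^m|\phi_\eps|^{-2}$ term. On the second, the surplus variance over $[m,\hat m_n]$ is split according to $|\phi_Y(u)|>n^{-1/2}$ (where $\E[|\tilde\phi_{Y,n}(u)-\phi_Y(u)|^2]\le 2|\phi_Y(u)|^2$, absorbed into the bias at level $m$) or $|\phi_Y(u)|\le n^{-1/2}$ (where the thresholding and a pointwise Hoeffding bound integrated over $|u|\le n^\alpha$ give $|\phi_X|^2$ plus $8n^{\alpha-\kappa^2/2}$); taking the infimum over $m$ finishes. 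To repair your proof you would essentially have to abandon the localization of $\hat m_n$ and redo this fixed-$m$, pointwise argument, since two-sided concentration of $\hat m_n$ around $\overline m_n$ can fail when $|\phi_Y|$ oscillates (e.g. uniform laws) even on your ``good'' event.
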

Theorem \ref{thm:AD} is non asymptotic and ensures that the estimator $\tilde f_{\hat m_{n}}$ automatically reaches the bias-variance compromise, up to a logarithmic factor and the multiplicative constant $C_{1}$. The logarithmic loss is technical; it permits to control the deviation of the empirical characteristic functions from the true characteristic function by the Hoeffding inequality. Without changing the strategy of the proof removing the additional log factor seems difficult. Proof of Theorem \ref{thm:AD} is self contained.  

 Regarding the choice of $\alpha$ and $\kappa$ in \eqref{eq:mD}, it is always possible to take $\alpha=1$. Note that the case $\alpha>1$ is not interesting as, even in the direct problem $\eps=0$ a.s.,  if $m>n$ the variance term in \eqref{eq:UBD} no longer tends to 0. Taking $\alpha<1$ is possible only if one has additional information on the target density $f$. For instance, if one knowns that $f$ is in a Sobolev class of regularity $\beta$, for some $\beta\geq\beta_{0}>0$,
\begin{align}\label{eq:Sob}f\in \mathscr{S}(\beta,L):=\Big\{f\in \textbf{F},\ \int_{\R}(1+|u|)^{2\beta}|\Fourier{f}(u)|^{2}\d u\leq L\Big\}
\end{align}
where $\textbf{F}$ is the set of densities with respect to the Lebesgue measure. Then, it holds that $\|f-f_{m}\|^{2}\asymp m^{-2\beta}$ and straightforward computations lead to $m^{\star}\lesssim \big(\frac n{\log n}\big)^{\frac{1}{2\beta+1}}$ (regardless the the asymptotic decay of $\phi_{\eps}$). Then, one may restrict the interval for $\hat m_{n}$ to $[0,n^{\alpha}]$ where $1>\alpha>\frac{1}{2\beta_{0}+1}$. Second, the choice of $\kappa$ must be such that $n^{\alpha-\kappa^{2}/2}$ is negligible, the choice $\kappa>\sqrt{2}$ always works. The following numerical study suggests that the procedure is stable in the choice of $\kappa$.

\subsection{Numerical results.}
\paragraph{Stability of the procedure.}
To illustrate the performances of the method and the influence of the parameter $\kappa$  we proceed as follows. For different densities $f$, namely, Uniform $\mathcal{U}[1,3]$, Gaussian $\mathcal{N}(2,1)$, Cauchy, Gamma $\Gamma(2,1)$ and the mixture $0.7\mathcal{N}(4,1)+0.3 \Gamma(2,\frac{1}{2})$, and for different values of $\kappa$ we compute  the adaptive $\lk^2$ risks from $M=1000$  Monte Carlo iterations. The results are displayed on Figures \ref{Fig:direct}, \ref{Fig:DS} and \ref{Fig:DSS}.
We consider three different settings:
\begin{itemize}
\item
{The direct  density estimation problem} (Figure \ref{Fig:direct}): we observe i.i.d. realizations of $f$. It is a particular deconvolution problem where $\eps=0$ a.s. 
\item
{Deconvolution problem with ordinary smooth noise} (Figure \ref{Fig:DS}): the error $\eps$  is Gamma $\Gamma(2,1)$ i.e. $|\phi_{\eps}|$ decays as $|u|^{-2}$ asymptotically.
\item
{Deconvolution problem with super smooth noise} (Figure \ref{Fig:DSS}): the error $\eps$  is Cauchy i.e. $|\phi_{\eps}|$ decays as $e^{-|u|}$ asymptotically.\end{itemize}

On Figures \ref{Fig:direct}, \ref{Fig:DS} and \ref{Fig:DSS} we observe that the adaptive rates are small and that the procedure is stable on the choice of kappa.
We observe on these three cases, that the value of $\kappa$ should not be chosen too large but that for a wide range of values the performances are similar. In practice, the value of $n$ is fixed and there is a natural boundary for $\kappa$, indeed observe that it is useless to increase $\kappa$ if $(1+\kappa\log n)n^{-1/2}\geq 1$ as the selection rule \eqref{eq:mD} will be constant equal to $n^{\alpha}$. Moreover, we expect that if $(1+\kappa\log n)n^{-1/2}$ gets too large, e.g. larger than 1/2 the performances of the adaptive estimator should deteriorate. This practical consideration encourages to choose $\kappa$ smaller than  $({\sqrt{n}-1}){\sqrt{\log(n)}}^{-1}$. In Figures \ref{Fig:direct}, \ref{Fig:DS} and \ref{Fig:DSS}  it appears that for all the meaningful values of $\kappa$, e.g. smaller than $\frac12({\sqrt{n}-1}){\sqrt{\log(n)}}^{-1}$ for instance, the performances of the adaptive estimator are similar.

\begin{figure}[h!]\begin{center}
\includegraphics[width=10cm,height=5cm]{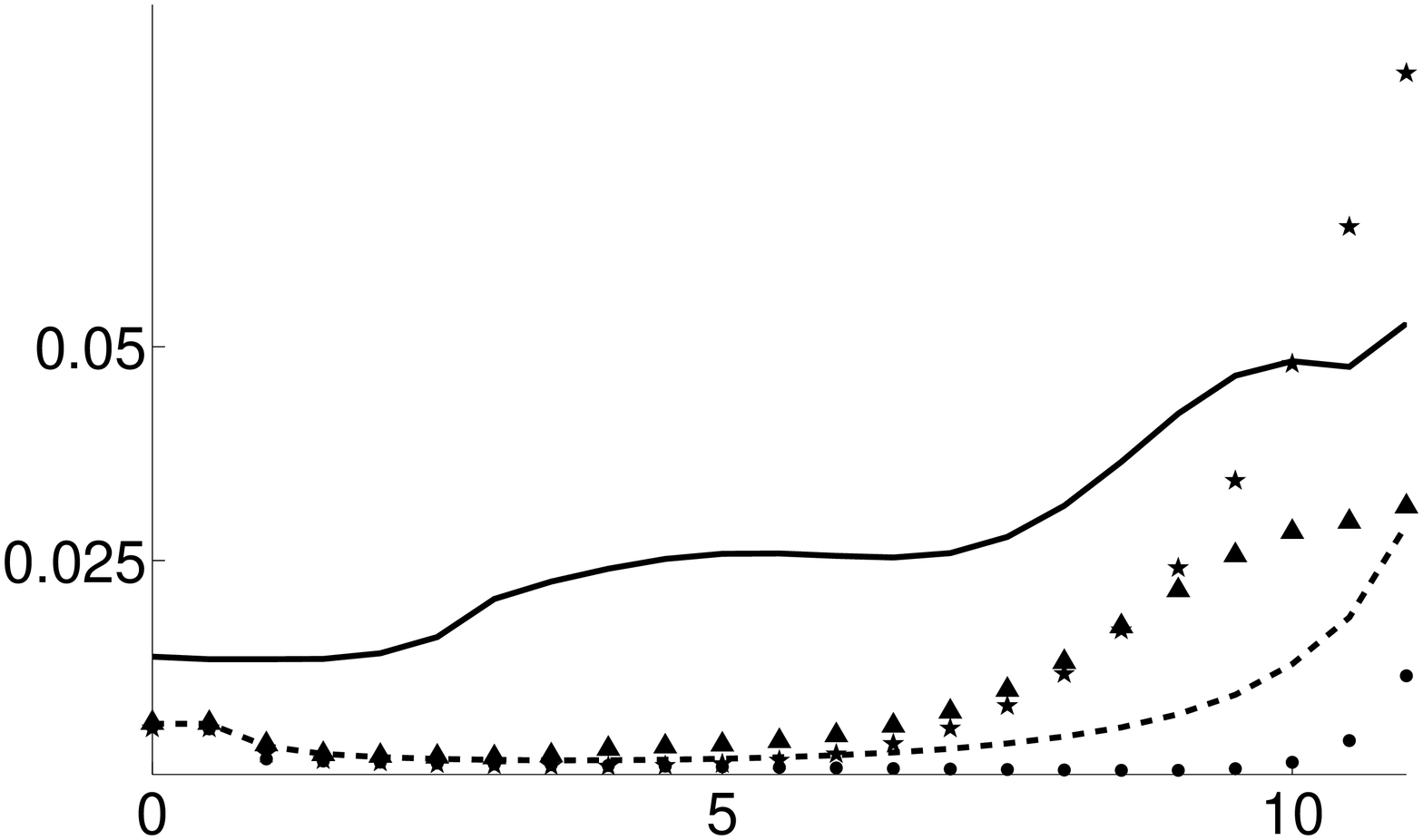}
\caption{\label{Fig:direct}\footnotesize{\textbf{Direct problem:} Computations by $M=1000$ Monte Carlo iterations of the $\lk^2$-risks ($y$ axis) for different values of $\kappa\leq({\sqrt{n}-1}){\sqrt{\log(n)}}^{-1}=11.6$ ($x$ axis). Estimation of $f$ from $n=1000$ i.i.d. direct realizations for different distributions: Uniform $\mathcal{U}[1,3]$ (plain line), Gaussian $\mathcal{N}(2,1)$ (dots), Cauchy (stars), Gamma $\Gamma(2,1)$ (dotted line) and the mixture $0.7\mathcal{N}(4,1)+0.3 \Gamma(2,\frac{1}{2})$ (triangles). }}\end{center}
\end{figure}

\begin{figure}[h!]\begin{center}
\includegraphics[width=10cm,height=5cm]{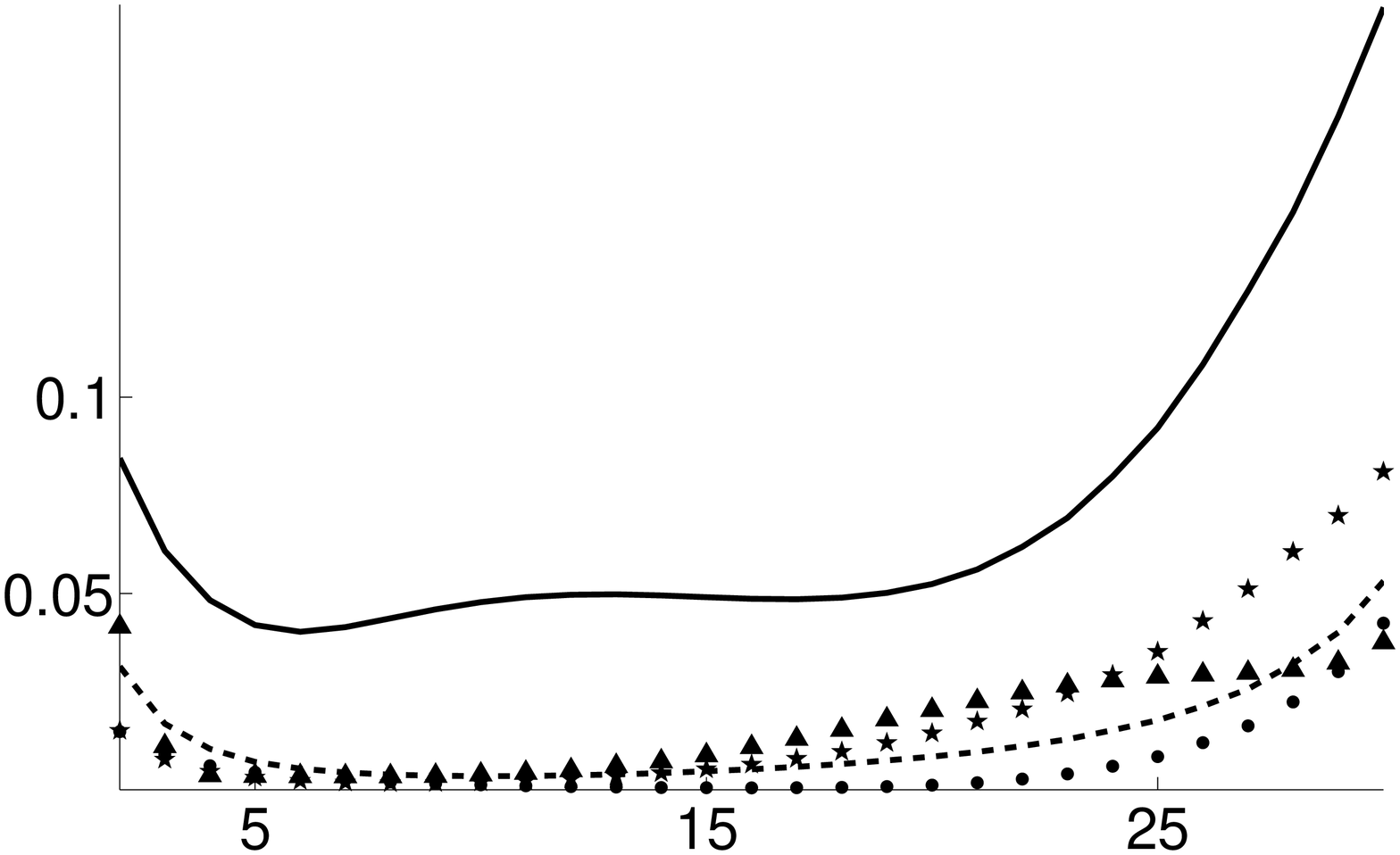}\end{center}
\caption{\label{Fig:DS}\footnotesize{\textbf{Deconvolution problem (ordinary smooth case):} Computations by $M=1000$ Monte Carlo iterations of the $\lk^2$-risks ($y$ axis) for different values of $\kappa\leq({\sqrt{n}-1}){\sqrt{\log(n)}}^{-1}=32.6$ ($x$ axis). Estimation of $f$ from $n=10000$ i.i.d. direct  of $X+\eps$ where $\eps$ has distribution $\Gamma(2,1)$ (i.e. $\phi_{\eps}(u)=(1-iu)^{-2}$)  and for different distributions for $X$: Uniform $\mathcal{U}[1,3]$ (plain line), Gaussian $\mathcal{N}(2,1)$ (dots), Cauchy (stars), Gamma $\Gamma(2,1)$ (dotted line) and the mixture $0.7\mathcal{N}(4,1)+0.3 \Gamma(2,\frac{1}{2})$ (triangles).  }}
\end{figure}

\begin{figure}[h!]\begin{center}
\includegraphics[width=10cm,height=5cm]{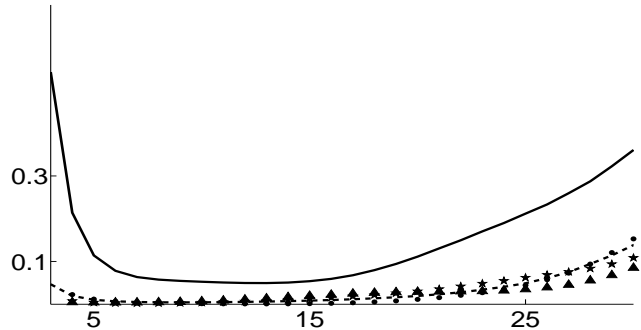}\end{center}
\caption{\label{Fig:DSS}\footnotesize{\textbf{Deconvolution problem (super smooth case):} Computations by $M=1000$ Monte Carlo iterations of the $\lk^2$-risks ($y$ axis) for different values of $\kappa\leq({\sqrt{n}-1}){\sqrt{\log(n)}}^{-1}=32.6$ ($x$ axis). Estimation of $f$ from $n=10000$ i.i.d. realizations of $X+\eps$ where $\eps$ has Cauchy distribution (i.e. $\phi_{\eps}(u)=e^{-|u|}$) and for different distributions for $X$: Uniform $\mathcal{U}[1,3]$ (plain line), Gaussian $\mathcal{N}(2,1)$ (dots), Cauchy (stars), Gamma $\Gamma(2,1)$ (dotted line) and the mixture $0.7\mathcal{N}(4,1)+0.3 \Gamma(2,\frac{1}{2})$ (triangles). For the uniform distribution, the rates where stable around the value 0.5, they do not appear on the Figure not to spoil the readability of the other curves.}}
\end{figure}

\paragraph{Comparison with other procedures.}
We compare the performances of our procedure for $\kappa=8$,  with a penalization procedure and with an oracle. For the penalization procedure, we follow Comte and Lacour \cite{comte2011data} and consider the adaptive estimator $\hat f_{\tilde m_{n}}$ which is the estimator defined in \eqref{eq:fhatD}  where 
\[\tilde m_{n}=\underset{m\in[0,M_{n}]}{\mbox{argmin}}\{-\|\hat f_{m}\|^{2}+\mbox{pen}(m)\},\ \mbox{pen}(m)=K\Big(\frac{\Delta(m)}{\log(m+1)}\Big)^{2}\frac{\Delta(m)}{n},\] where $M_{n}>0,$ $K>0$ and $\Delta(m)=\frac{1}{2\pi}\int_{[-m,m]}|\phi_{\eps}(u)|^{-2}\d u$, which is known in our setting. The  parameter $M_{n}$ is chosen as the maximal integer such that $1\leq\frac{\Delta(m)}{n}\leq 2$. For the parameter $K$ it is calibrated by preliminary simulation experiments. For calibration strategies (dimension jump and slope heuristics), the reader is referred to Baudry et al. \cite{baudry2012slope}. Here, we test a grid of values of the $K$’s from the empirical error point of view, to make a relevant choice; the tests are conducted on a set of densities which are different from the one considered hereafter, to avoid overfitting. After these preliminary experiments, $K$ is chosen equal to $2$ which is the same value as the one considered in Comte and Lacour \cite{comte2011data}. The standard errors are given in parenthesis. The running times for each risks of the penalization procedure and our procedure are similar; our procedure is barely faster.  However, one should take into account that a preliminary calibration step seems obsolete in our case. In deconvolution problems, the theoretical optimal $K$ can be in some cases far away from the practically optimal $K$ and may vary with the sample size explaining the nessecity of this calibration step (see e.g. Kappus and Mabon \cite{kappus2014adaptive} where the practical optimal value of $K$ was much smaller than the value predicted by the theory).

Second, an oracle ”estimator” is computed $\hat f_{m^{\star}}$, which is the estimator defined in \eqref{eq:fhatD} where $m^{\star}$ corresponds to the following oracle bandwidth
\[m^{\star} =\underset{m>0}{\mbox{argmin }}\E[\|f-\hat f_{m}\|^{2}].\]
This oracle can be explicitly evaluated when $f$ is known. We denote these different risks by $R$, for the risk of our procedure, $R_{pen}$ for the penalized estimator and $R_{or}$ for the oracle procedure. All these risks are computed on $1000$ Monte Carlo iterations. The results are gathered in Tables \ref{Table1} for the Gamma density, \ref{Table2} for the mixture  and \ref{Table3} for the Cauchy density where $\mathcal{C}$ stands for the Cauchy distribution. In each case both an ordinary smooth and a super smooth errors are considered.  \\

\begin{table}
\begin{center}
\begin{tabular}{c c cccccc}
\multirow{2}{*}{\diagbox{$f_{\eps}$}{$f$} }& & \multicolumn{6}{c}{$\Gamma(2,1)$} \\
 & $n$ & $R$  & $\hat m$ & $R_{pen}$ & $\tilde m$ & $R_{or}$  & $m^{\star}$\\
 \hline
 \hline
\multirow{6}{*}{$\Gamma(2,1)$}&  \multirow{2}{*}{ 500} & {\small $4.31\times 10^{-2}$ }   & {\small 1.05}&{\small $1.97\times 10^{-2}$ }&{\small 0.80} &{\small $0.74\times 10^{-2}$ }&{\small 0.66 }  \\
&   &  {\scriptsize{($0.20$)}}  & {\scriptsize{($0.07$)}}& {\scriptsize{($0.01$)}}&{\scriptsize{($0.05$)}}&{\scriptsize{($  0.36\times 10^{-2}$)}} &{\scriptsize{($0.14  $)}} \\
\cline{2-8}
& \multirow{2}{*}{ 1000} & {\small $1.74\times 10^{-2}$ }   & {\small 0.98}&{\small $1.70\times 10^{-2}$ }&{\small 0.94} &{\small $0.59\times 10^{-2}$ }&{\small 0.72}  \\
  & &  {\scriptsize{($0.13$)}}  & {\scriptsize{($0.04$)}}& {\scriptsize{($0.03$)}}&{\scriptsize{($0.03  $)}}&{\scriptsize{($0.28\times 10^{-2}  $)}} & {\scriptsize{($0.14  $)}} \\
  \cline{2-8}
  &  \multirow{2}{*}{ 5000} & {\small $0.40\times 10^{-2}$ }   & {\small 0.85}&{\small $1.30\times 10^{-2}$ }&{\small 1.32} &{\small $0.31\times 10^{-2}$ }&{\small 0.91 }  \\
&   &  {\scriptsize{($0.06$)}}  & {\scriptsize{($0.01$)}}& {\scriptsize{($0.01$)}}&{\scriptsize{($0.05$)}}&{\scriptsize{($  0.13\times 10^{-2}$)}} &{\scriptsize{($0.15  $)}} \\
\hline
\hline
\multirow{6}{*}{$\mathcal{C}$}&  \multirow{2}{*}{ 500} & {\small $5.27\times 10^{-2}$ }   & {\small 0.90}&{\small $1.21\times 10^{-2}$ }&{\small 0.56 } &{\small $0.92\times 10^{-2}$ }&{\small 0.61}  \\
&   &  {\scriptsize{($0.23$)}}  & {\scriptsize{($0.07$)}}& {\scriptsize{($0.69\times 10^{-2}$)}}&{\scriptsize{($  0.04$)}}&{\scriptsize{($  0.48\times 10^{-2}$)}} & {\scriptsize{($ 0.12$)}} \\
\cline{2-8}
& \multirow{2}{*}{ 1000} & {\small $1.84\times 10^{-2}$ }   & {\small 0.84}&{\small $0.98\times 10^{-2}$ }&{\small 0.70 } &{\small $0.70\times 10^{-2}$ }&{\small 0.67}  \\
  & &  {\scriptsize{($0.13$)}}  & {\scriptsize{($0.04$)}}& {\scriptsize{($0.61\times 10^{-2}$)}}&{\scriptsize{($0.03 $)}}&{\scriptsize{($ 0.34\times 10^{-2} $)}} &{\scriptsize{($0.13$)}} \\
\cline{2-8}
& \multirow{2}{*}{ 5000} & {\small $0.51\times 10^{-2}$ }   & {\small 0.70}&{\small $0.71\times 10^{-2}$ }&{\small 1.10 } &{\small $0.39\times 10^{-2}$ }&{\small 0.82}  \\
  & &  {\scriptsize{($0.07$)}}  & {\scriptsize{($0.01$)}}& {\scriptsize{($0.01\times 10^{-2}$)}}&{\scriptsize{($0.02 $)}}&{\scriptsize{($ 0.17\times 10^{-2} $)}} &{\scriptsize{($0.13$)}} \\
\hline
\end{tabular}\caption{\label{Table1} \footnotesize{Comparaison of the different adaptive estimators for the Gamma distribution.}}
\end{center}

\end{table}

\begin{table}
\begin{center}
\begin{tabular}{c c cccccc}
\multirow{2}{*}{\diagbox{$f_{\eps}$}{$f$} }& & \multicolumn{6}{c}{$0.7\mathcal{N}(4,1)+0.3\Gamma(4,\frac12)$} \\
 & $n$ & $R$  & $\hat m$ & $R_{pen}$ & $\tilde m$ & $R_{or}$  & $m^{\star}$\\
 \hline
 \hline
\multirow{6}{*}{$\Gamma(2,1)$}&  \multirow{2}{*}{ 500} & {\small $1.77\times 10^{-2}$ }   & {\small 0.92}&{\small $0.78\times 10^{-2}$ }&{\small 0.78} &{\small $0.33\times 10^{-2}$ }&{\small 0.59 }  \\
&   &  {\scriptsize{($0.13$)}}  & {\scriptsize{($0.05$)}}& {\scriptsize{($0.65\times 10^{-2}$)}}&{\scriptsize{($0.03$)}}&{\scriptsize{($  0.17\times 10^{-2}$)}} &{\scriptsize{($0.16  $)}} \\
\cline{2-8}
& \multirow{2}{*}{ 1000} & {\small $0.74\times 10^{-2}$ }   & {\small 0.89}&{\small $0.73\times 10^{-2}$ }&{\small 0.87} &{\small $0.26\times 10^{-2}$ }&{\small 0.67}  \\
  & &  {\scriptsize{($0.08$)}}  & {\scriptsize{($0.03$)}}& {\scriptsize{($0.64\times 10^{-2}$)}}&{\scriptsize{($0.03 $)}}&{\scriptsize{($0.15\times 10^{-2}  $)}} & {\scriptsize{($0.14  $)}} \\
  \cline{2-8}
& \multirow{2}{*}{ 5000} & {\small $0.13\times 10^{-2}$ }   & {\small 0.79}&{\small $0.38\times 10^{-2}$ }&{\small 1.10} &{\small $0.01\times 10^{-2}$ }&{\small 0.82}  \\
  & &  {\scriptsize{($0.03$)}}  & {\scriptsize{($0.01$)}}& {\scriptsize{($0.30\times 10^{-2}$)}}&{\scriptsize{($0.01 $)}}&{\scriptsize{($0.06\times 10^{-3}  $)}} & {\scriptsize{($0.11  $)}} \\
\hline
\hline
\multirow{6}{*}{$\mathcal{C}$}&  \multirow{2}{*}{ 500} & {\small $2.78\times 10^{-2}$ }   & {\small 0.82}&{\small $0.73\times 10^{-2}$ }&{\small 0.55 } &{\small $0.43\times 10^{-2}$ }&{\small 0.50}  \\
&   &  {\scriptsize{($0.15$)}}  & {\scriptsize{($0.05$)}}& {\scriptsize{($0.50\times 10^{-2}$)}}&{\scriptsize{($ 0.05$)}}&{\scriptsize{($  0.20\times 10^{-2}$)}} & {\scriptsize{($ 0.14$)}} \\
\cline{2-8}
& \multirow{2}{*}{ 1000} & {\small $1.02\times 10^{-2}$ }   & {\small 0.77}&{\small $0.65\times 10^{-2}$ }&{\small 0.67 } &{\small $0.34\times 10^{-2}$ }&{\small 0.58}  \\
  & &  {\scriptsize{($0.10$)}}  & {\scriptsize{($0.03$)}}& {\scriptsize{($0.52\times 10^{-2}$)}}&{\scriptsize{($0.03$)}}&{\scriptsize{($ 0.16\times 10^{-2} $)}} &{\scriptsize{($0.15$)}} \\
  \cline{2-8}
& \multirow{2}{*}{ 5000} & {\small $0.21\times 10^{-2}$ }   & {\small 0.66}&{\small $0.59\times 10^{-2}$ }&{\small 0.94 } &{\small $0.16\times 10^{-2}$ }&{\small 0.74}  \\
  & &  {\scriptsize{($0.04$)}}  & {\scriptsize{($0.01$)}}& {\scriptsize{($0.48\times 10^{-2}$)}}&{\scriptsize{($0.02 $)}}&{\scriptsize{($ 0.10\times 10^{-2} $)}} &{\scriptsize{($0.11$)}} \\
\hline
\end{tabular}\caption{\label{Table2} \footnotesize{Comparaison of the different adaptive estimators on a mixture.}}
\end{center}

\end{table}

\begin{table}
\begin{center}
\begin{tabular}{c c cccccc}
\multirow{2}{*}{\diagbox{$f_{\eps}$}{$f$} }& & \multicolumn{6}{c}{$\mathcal{C}$} \\
 & $n$ & $R$  & $\hat m$ & $R_{pen}$ & $\tilde m$ & $R_{or}$  & $m^{\star}$\\
 \hline
 \hline
\multirow{6}{*}{$\Gamma(2,1)$}&  \multirow{2}{*}{ 500} & {\small $2.69\times 10^{-2}$ }   & {\small 0.59}&{\small $1.00\times 10^{-2}$ }&{\small 0.68} &{\small $0.67\times 10^{-2}$ }&{\small 0.62 }  \\
&   &  {\scriptsize{($0.16$)}}  & {\scriptsize{($0.07$)}}& {\scriptsize{($0.67\times 10^{-2}$)}}&{\scriptsize{($0.03$)}}&{\scriptsize{($  0.29\times 10^{-2}$)}} &{\scriptsize{($0.10  $)}} \\
\cline{2-8}
& \multirow{2}{*}{ 1000} & {\small $1.00\times 10^{-2}$ }   & {\small 0.84}&{\small $0.97\times 10^{-2}$ }&{\small 0.82} &{\small $0.49\times 10^{-2}$ }&{\small 0.67}  \\
  & &  {\scriptsize{($0.09$)}}  & {\scriptsize{($0.04$)}}& {\scriptsize{($0.70\times 10^{-2}$)}}&{\scriptsize{($0.05  $)}}&{\scriptsize{($0.21\times 10^{-2}  $)}} & {\scriptsize{($0.10  $)}} \\
\cline{2-8}
& \multirow{2}{*}{ 5000} & {\small $0.27\times 10^{-2}$ }   & {\small 0.69}&{\small $0.81\times 10^{-2}$ }&{\small 1.18} &{\small $0.21\times 10^{-2}$ }&{\small 0.82}  \\
  & &  {\scriptsize{($0.05$)}}  & {\scriptsize{($0.01$)}}& {\scriptsize{($0.57\times 10^{-2}$)}}&{\scriptsize{($0.04 $)}}&{\scriptsize{($0.10\times 10^{-2}  $)}} & {\scriptsize{($0.10  $)}} \\
\hline
\hline
\multirow{6}{*}{$\mathcal{C}$}&  \multirow{2}{*}{ 500} & {\small $2.50\times 10^{-2}$ }   & {\small 0.74}&{\small $1.12\times 10^{-2}$ }&{\small 0.45 } &{\small $0.86\times 10^{-2}$ }&{\small 0.56}  \\
&   &  {\scriptsize{($0.16$)}}  & {\scriptsize{($0.07$)}}& {\scriptsize{($0.27\times 10^{-2}$)}}&{\scriptsize{($ 0.01$)}}&{\scriptsize{($  0.34\times 10^{-2}$)}} & {\scriptsize{($ 0.09$)}} \\
\cline{2-8}
& \multirow{2}{*}{ 1000} & {\small $1.00\times 10^{-2}$ }   & {\small 0.68}&{\small $0.74\times 10^{-2}$ }&{\small 0.59} &{\small $0.62\times 10^{-2}$ }&{\small 0.62}  \\
  & &  {\scriptsize{($0.10$)}}  & {\scriptsize{($0.04$)}}& {\scriptsize{($0.32\times 10^{-2}$)}}&{\scriptsize{($0.03  $)}}&{\scriptsize{($ 0.24\times 10^{-2} $)}} &{\scriptsize{($0.09$)}} \\
  \cline{2-8}
& \multirow{2}{*}{ 5000} & {\small $0.52\times 10^{-2}$ }   & {\small 0.54}&{\small $0.73\times 10^{-2}$ }&{\small 0.93 } &{\small $0.29\times 10^{-2}$ }&{\small 0.74}  \\
  & &  {\scriptsize{($0.07$)}}  & {\scriptsize{($0.01$)}}& {\scriptsize{($0.52\times 10^{-2}$)}}&{\scriptsize{($0.03  $)}}&{\scriptsize{($ 0.11\times 10^{-2} $)}} &{\scriptsize{($0.09$)}} \\
\hline
\end{tabular}\caption{\label{Table3} \footnotesize{Comparaison of the different adaptive estimators for the Cauchy distribution.}}
\end{center}

\end{table}

\textbf{Comparaison of the different methods. } Tables \ref{Table1}, \ref{Table2} and \ref{Table3} show that all the procedures behave as expected; the $\lk^{2}$-risks decreases with $n$ and are smaller in the case of an ordinary smooth deconvolution problem than in the case of a super smooth deconvolution problem. The estimator with the smallest risk is the oracle, and the penalized risks are most of the time smaller than our procedure which is consistent with the fact that our procedure has a logarithmic loss and is asymptotic. More precisely  for small values of $n$ our procedure does not perform as well as the penalized method. 
But for larger values of $n$ it is competitive. We can exhibit particular cases where our procedure is more stable in the choice of the hyper parameter than the penalized procedure, even on large sample sizes (see Figure \ref{FigComp} for example).
This is due to the fact that the penalized constant $K$ that is suitable for small values of $n$ is different than for larger values of $n$. In practice a logarithmic term in $n$ is added in the penalty term, that is theoretically unnecessary and entails a logarithmic loss but improves the numerical results. If we add this logarithmic term (we replace $K=2$ with $\tilde K\log(n)^{2.5}$ with $\tilde K=0.3$ and the multiplying $\log(n)^{2.5}$ factor as suggested in Comte et al. \cite{comte2007finite}). This second penalty procedure performs well for all values of $n$ and when $n$ gets large it has similar  performances as our procedure (see Table \ref{Table4}). For our procedure, changing $\kappa$ for smaller values of $n$ does not improve the results.

\begin{figure}\begin{center}
\begin{tabular}{ccc}
\multicolumn{3}{c}{\textbf{Ordinary smooth case $\eps\sim\mathcal{G}(2,1)$}}\\
\multicolumn{3}{c}{{Penalized adaptive estimator}}\\
$K=2$ & $K=5$ & $K=8$\\
\includegraphics[scale=0.15]{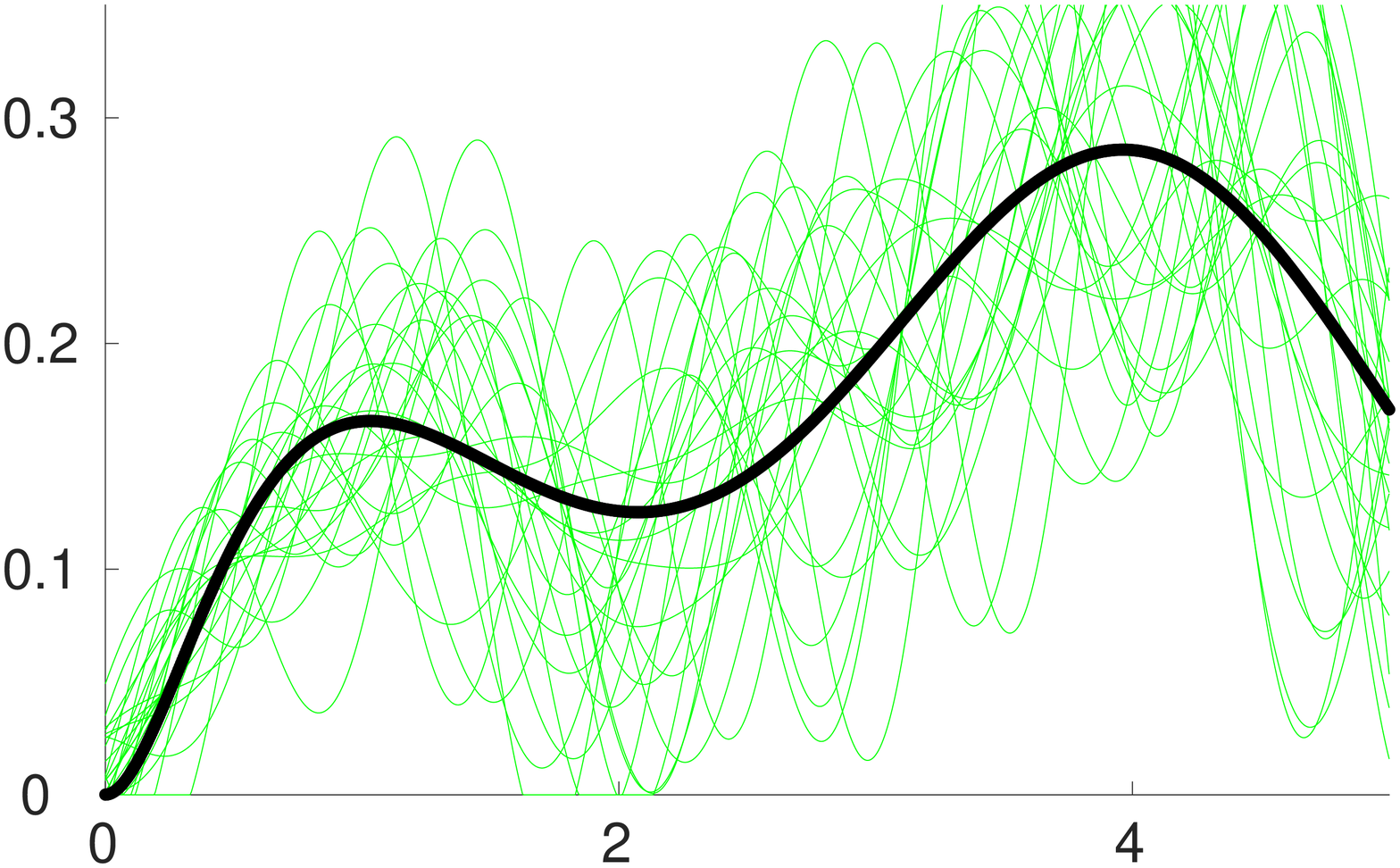}  &\includegraphics[scale=0.15]{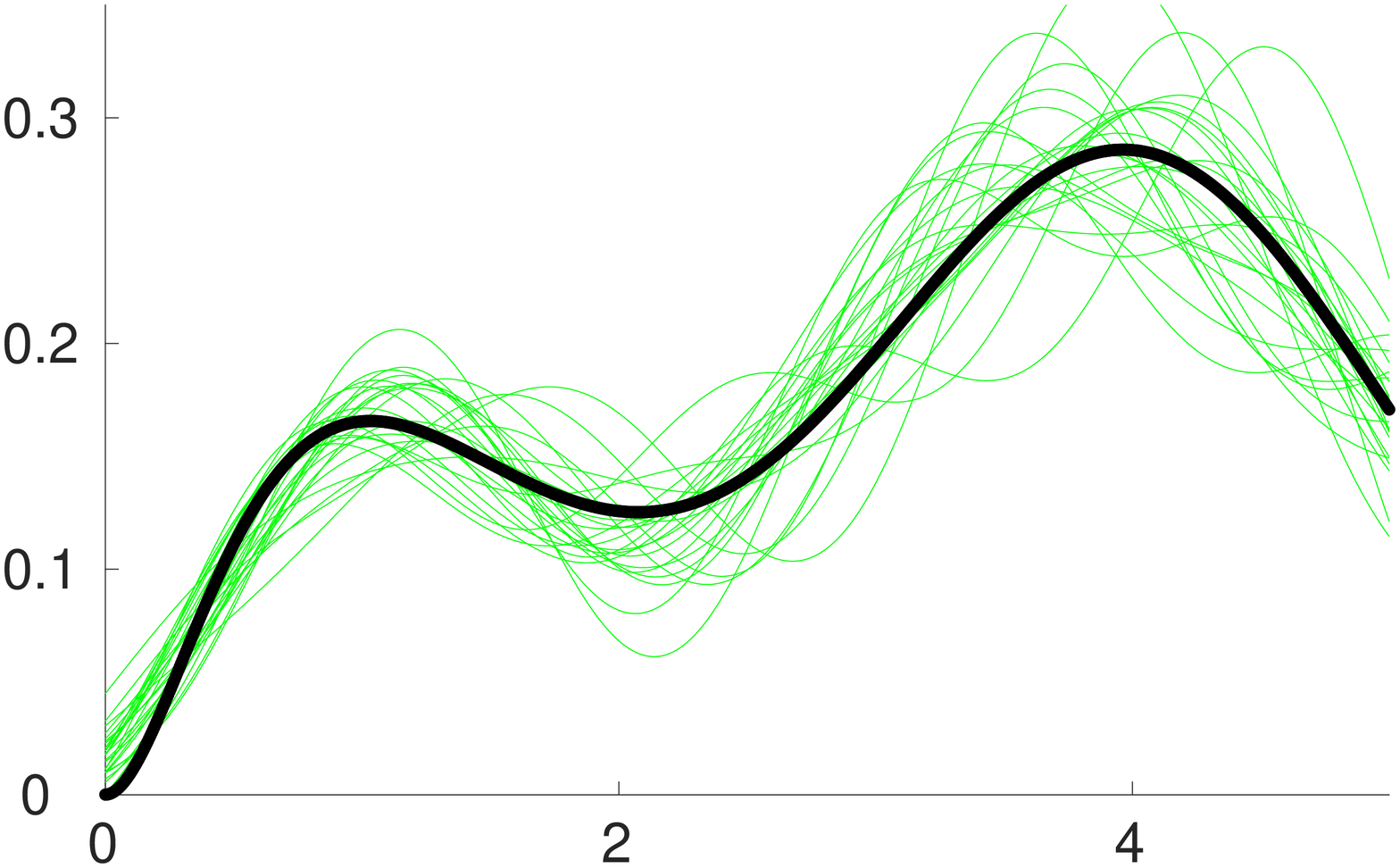}  &\includegraphics[scale=0.15]{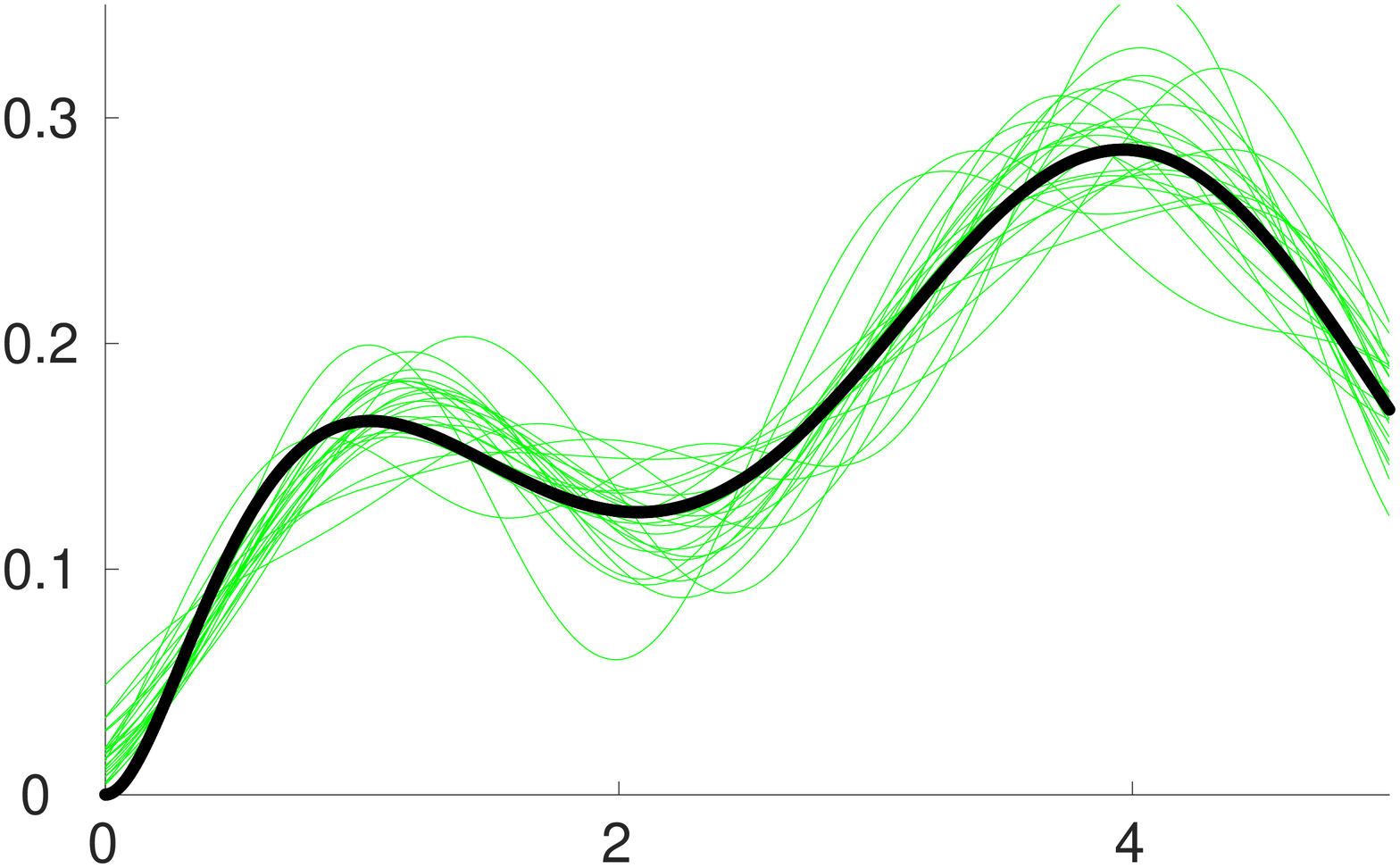}  \\
\multicolumn{3}{c}{{Our adaptive estimator}}\\
$\kappa=2$ & $\kappa=5$ & $\kappa=8$\\

\includegraphics[scale=0.15]{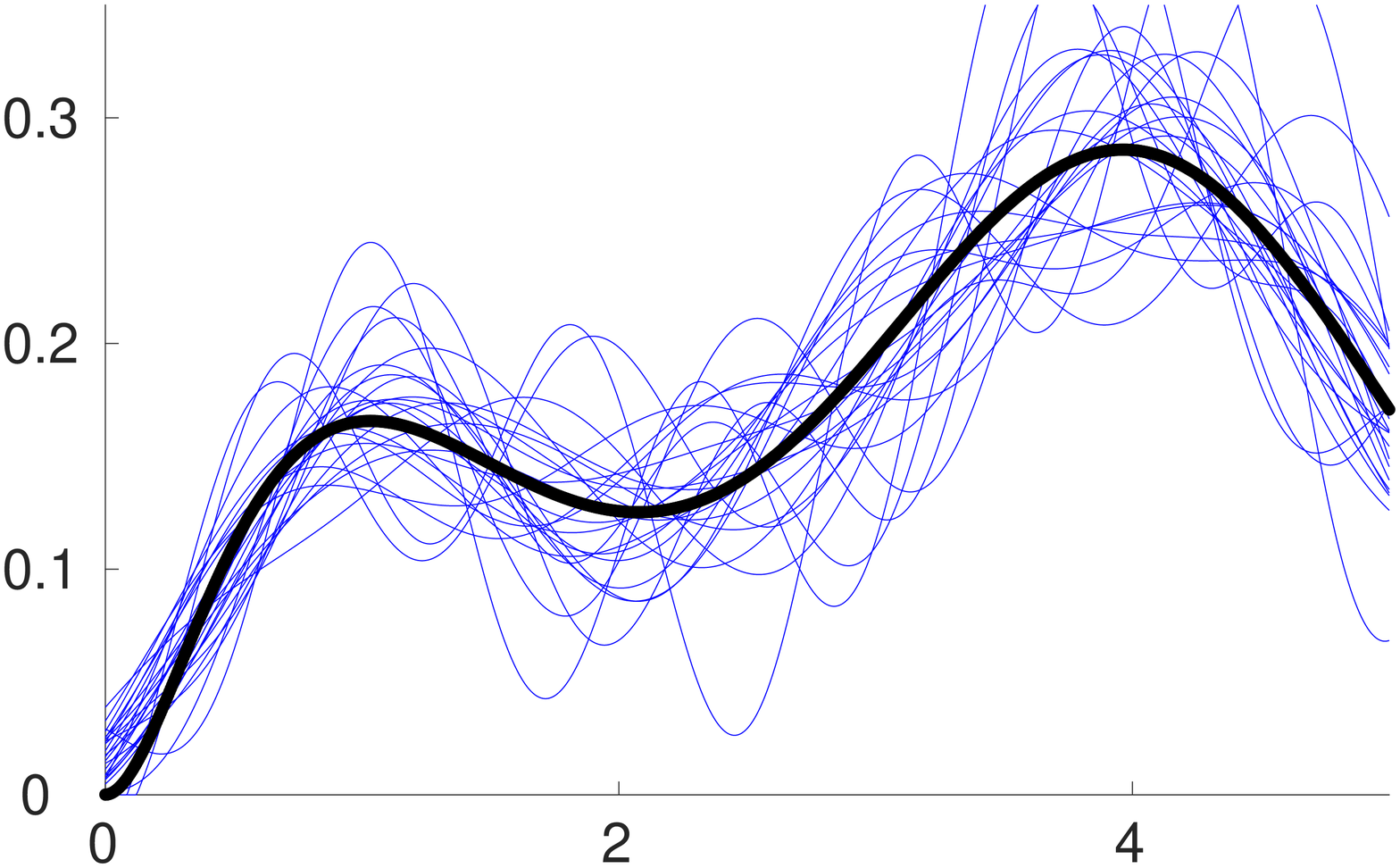}  &\includegraphics[scale=0.15]{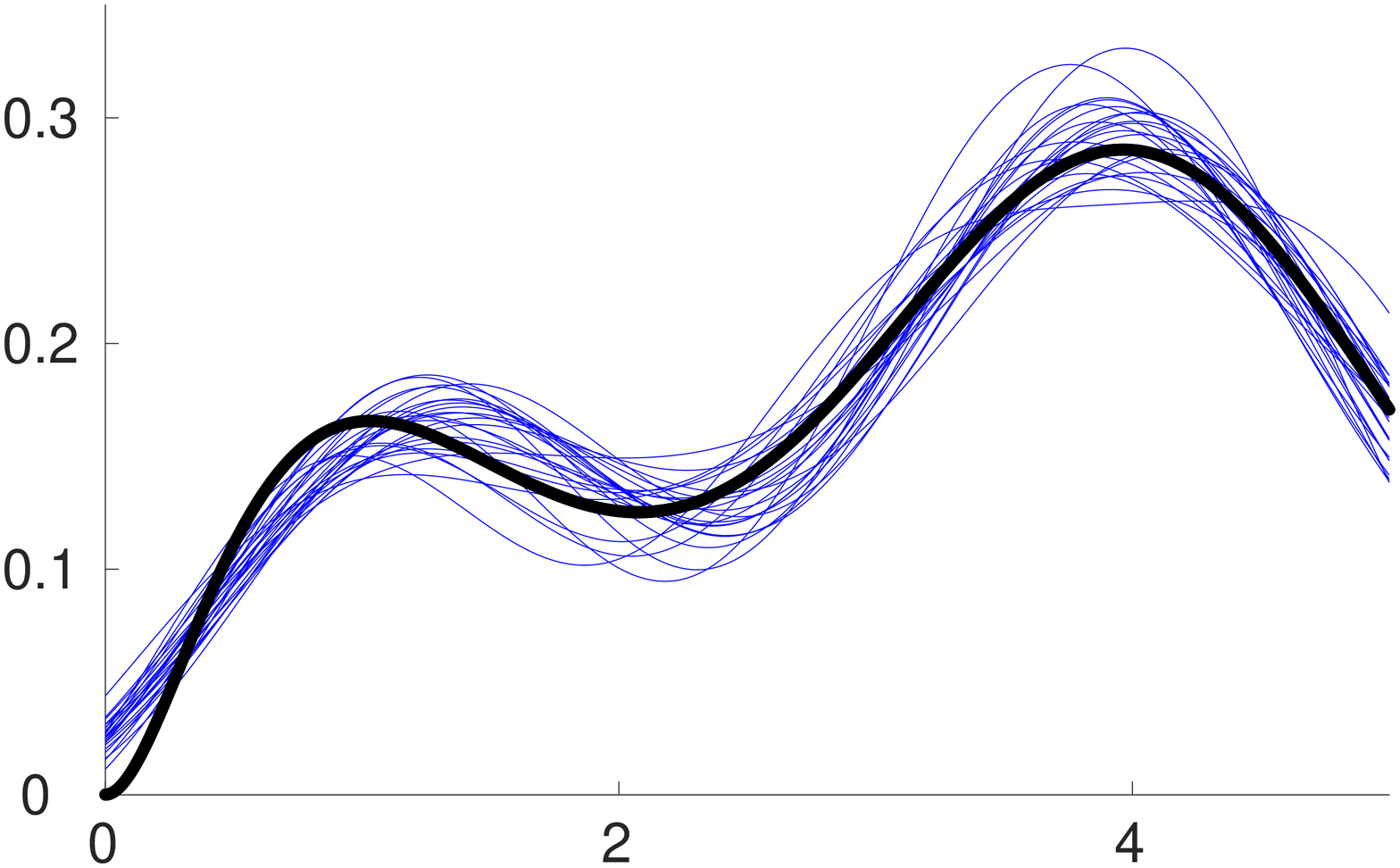}  &\includegraphics[scale=0.15]{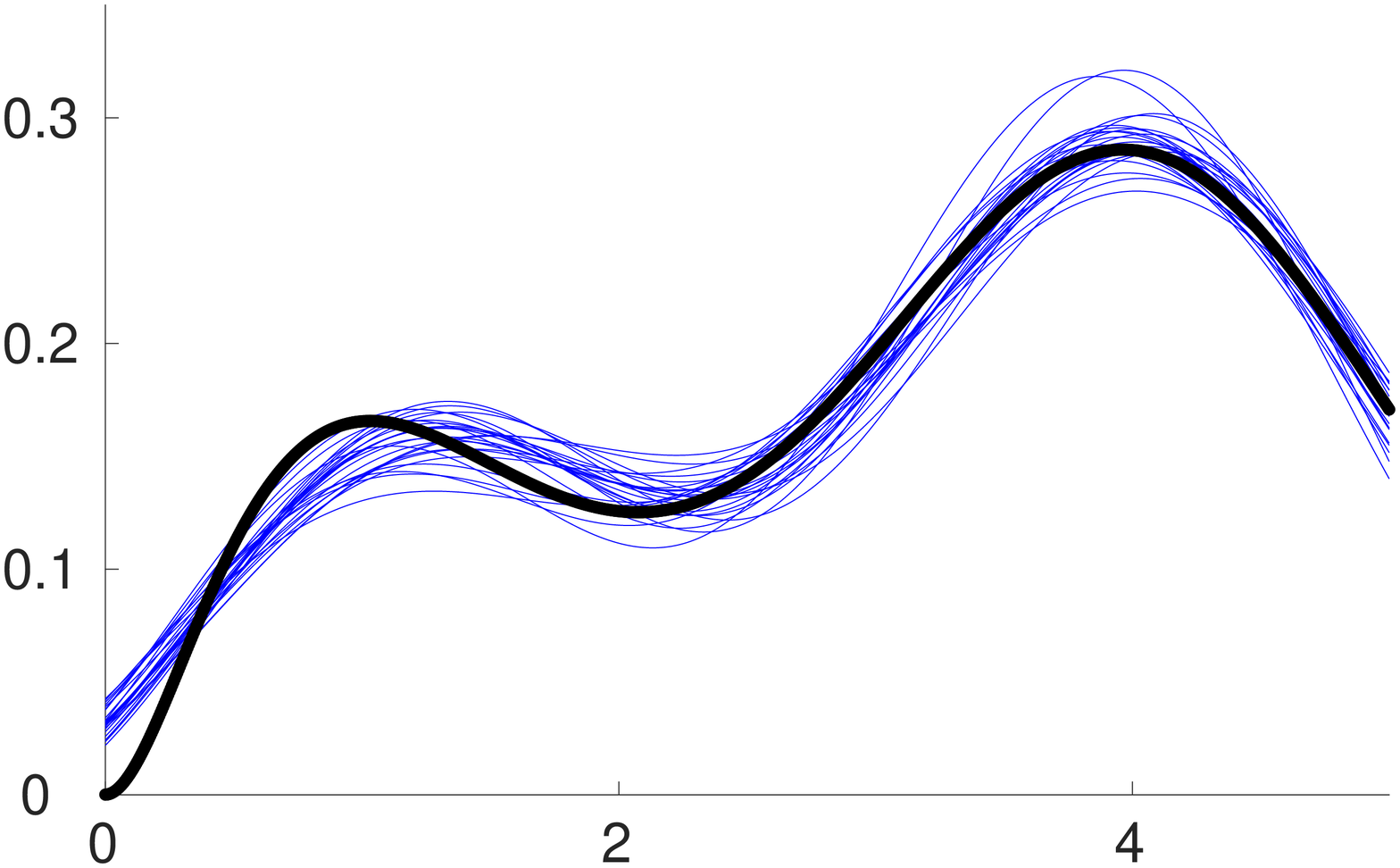}  \\
\multicolumn{3}{c}{\textbf{Super smooth  case $\eps\sim\mathcal{C}(0,1)$}}\\
\multicolumn{3}{c}{{Penalized adaptive estimator}}\\
$K=2$ & $K=5$ & $K=8$\\
\includegraphics[scale=0.15]{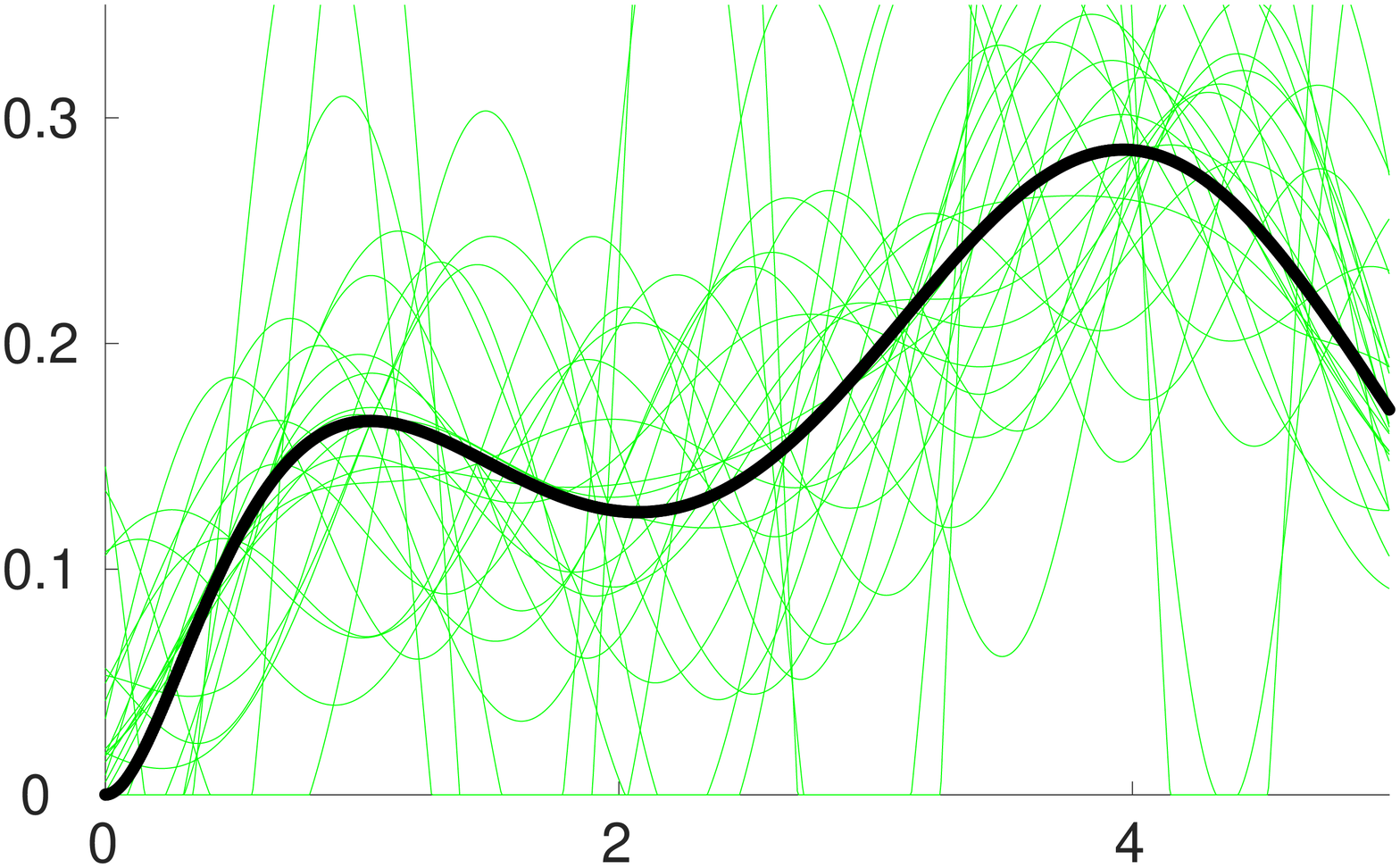}  &\includegraphics[scale=0.15]{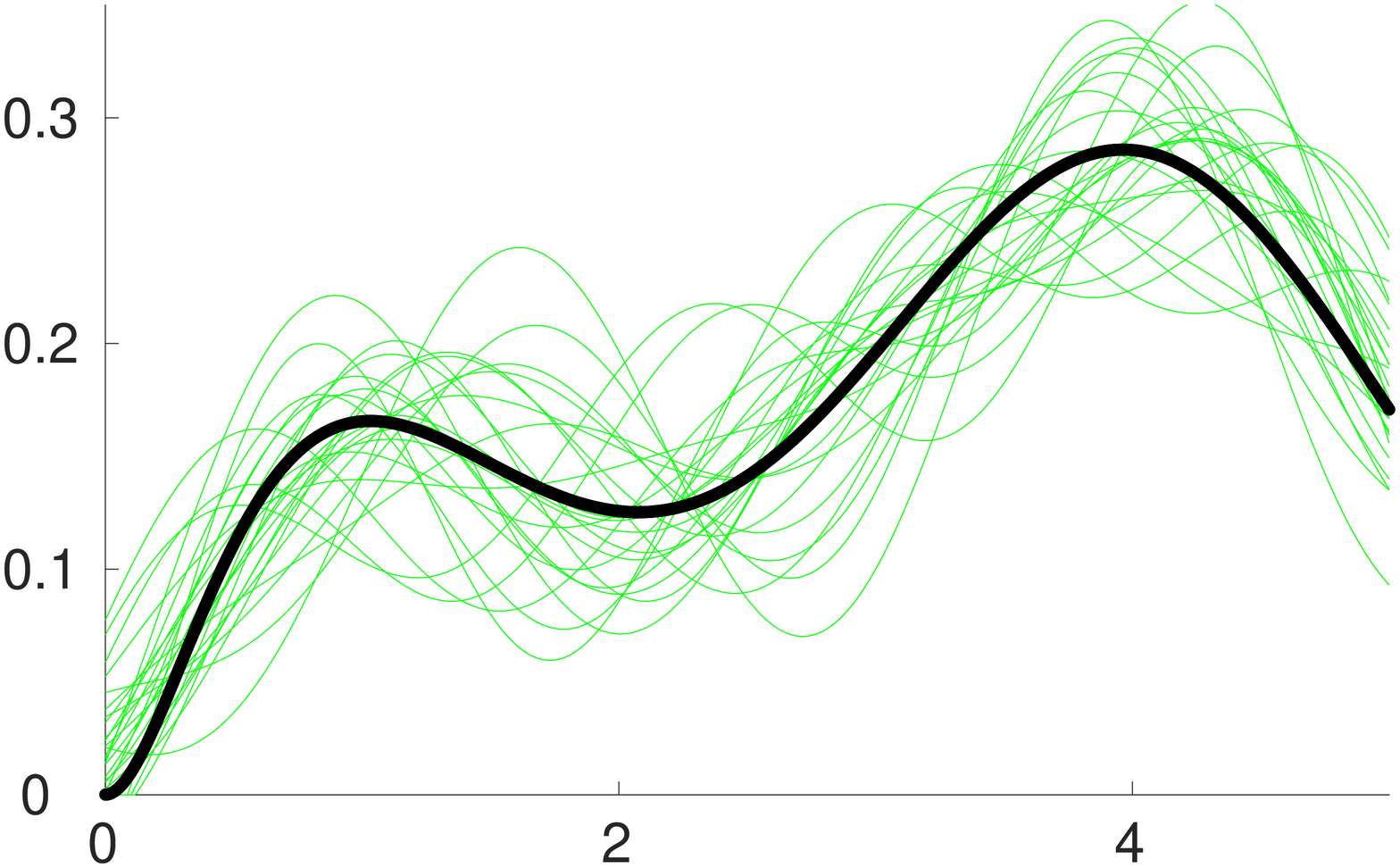}  &\includegraphics[scale=0.15]{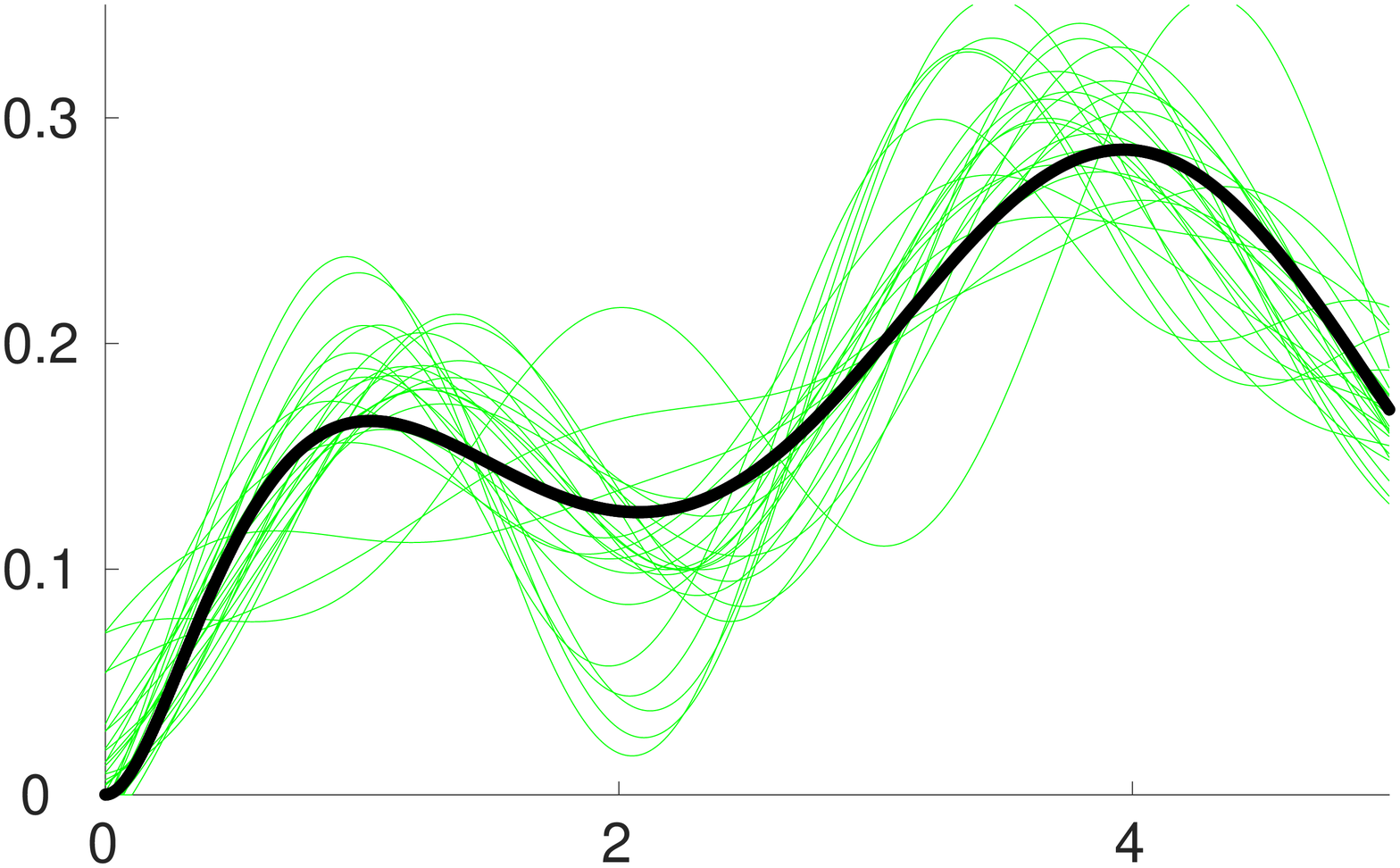}  \\
\multicolumn{3}{c}{{Our adaptive estimator}}\\
$\kappa=2$ & $\kappa=5$ & $\kappa=8$\\

\includegraphics[scale=0.15]{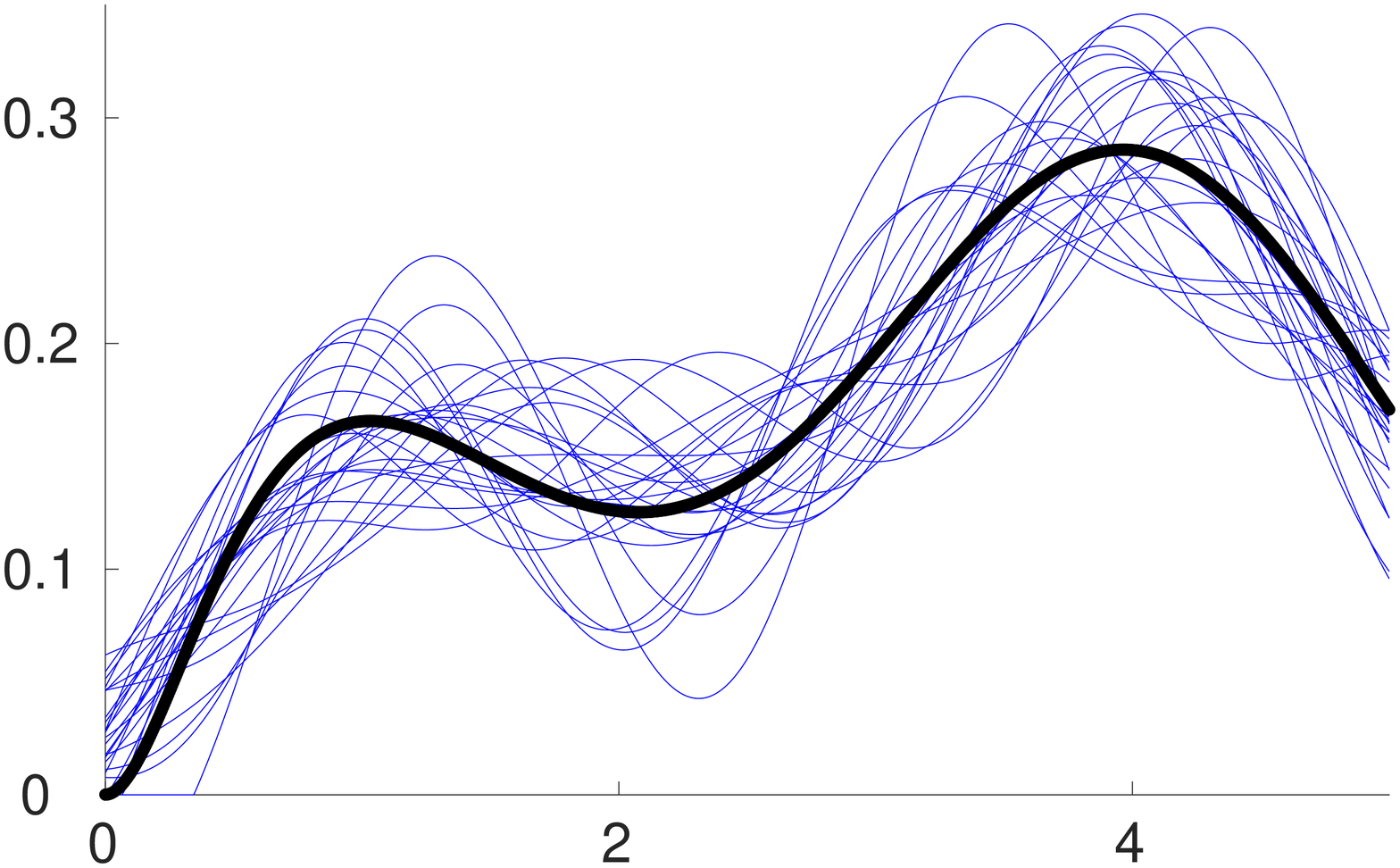}  &\includegraphics[scale=0.15]{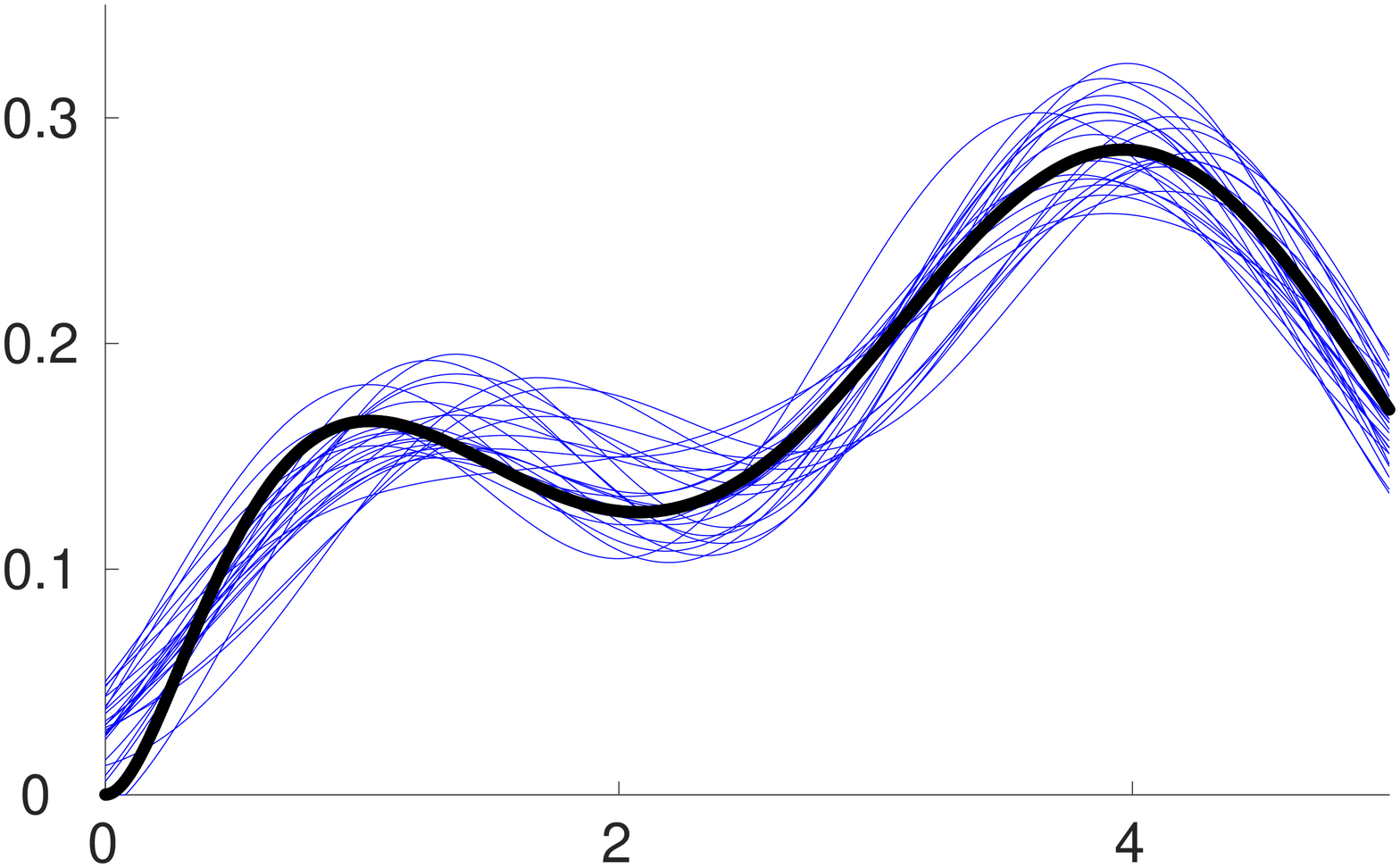}  &\includegraphics[scale=0.15]{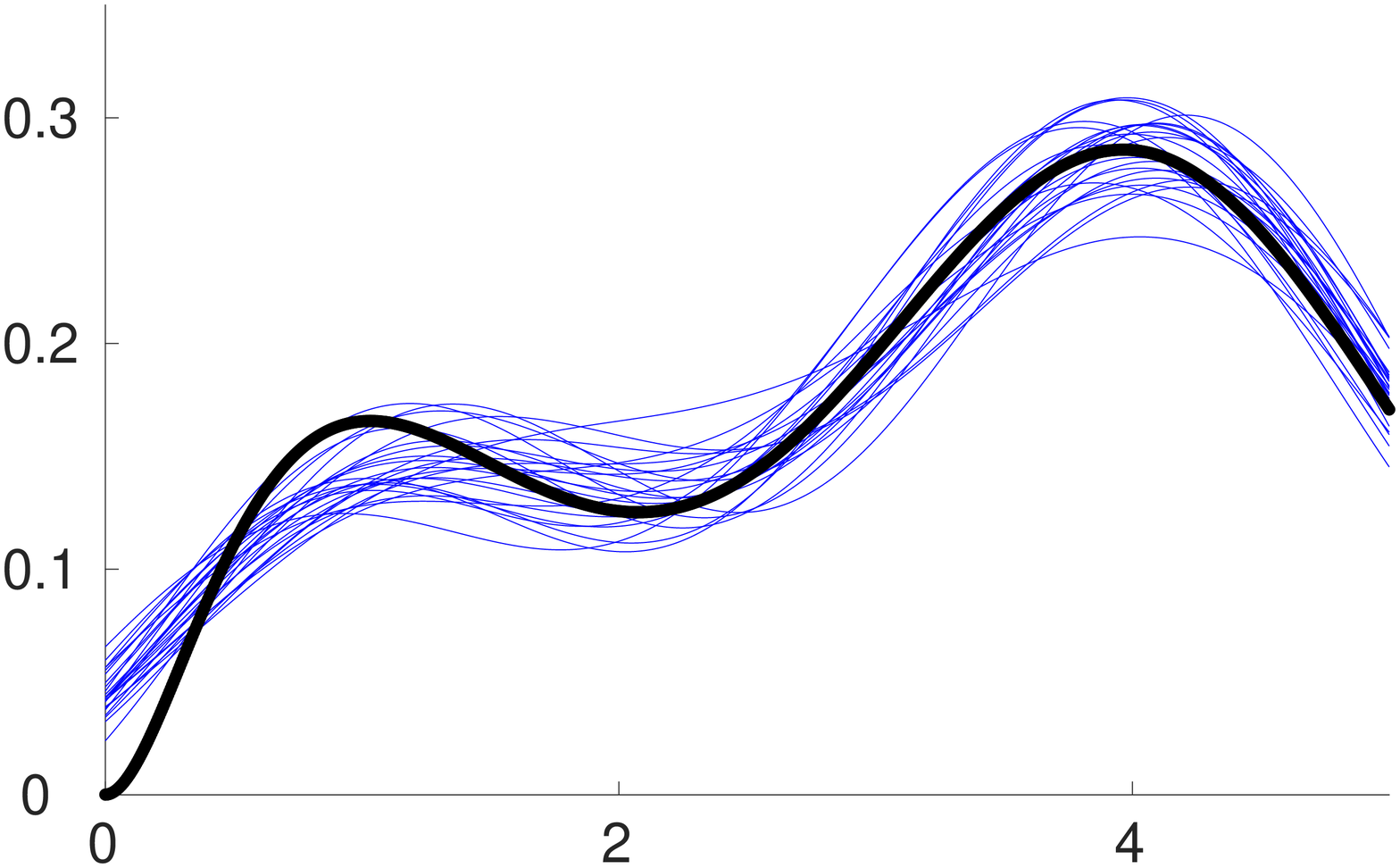}  \\\end{tabular}
\caption{\label{FigComp}Comparison for different values of $K$ and $\kappa$ the penalized estimator (green) and our adaptive estimator (blue). Estimation of $f\sim\big(0.3\mathcal{G}(3,\frac{1}{2})+0.7\mathcal{G}(4,1)\big)$ (bold black) from $n=10000$ observations. }

\end{center}
\end{figure}

\begin{table}
\begin{center}
\begin{tabular}{c c cccccc}
\multirow{2}{*}{\diagbox{$f_{\eps}$}{$f$} }& & \multicolumn{2}{c}{$\mathcal{G}$} & \multicolumn{2}{c}{$\mathcal{M}$}& \multicolumn{2}{c}{$\mathcal{C}$} \\
 & $n$ & $R_{\tilde{pen}}$ & $\tilde m_{\tilde{pen}}$ &$R_{\tilde{pen}}$ & $\tilde m_{\tilde{pen}}$ &$R_{\tilde{pen}}$ & $\tilde m_{\tilde{pen}}$ \\
 \hline
 \hline
\multirow{6}{*}{$\Gamma(2,1)$}&  \multirow{2}{*}{ 500} & {\small $1.17\times 10^{-2}$ }   & {\small 0.72}&{\small $0.63\times 10^{-2}$ }&{\small 0.69} &{\small $0.83\times 10^{-2}$ }&{\small 0.62 }  \\
&   &  {\scriptsize{($0.81\times 10^{-2}$)}}  & {\scriptsize{($0.03$)}}& {\scriptsize{($0.51\times 10^{-2}$)}}&{\scriptsize{($0.02$)}}&{\scriptsize{($  0.46\times 10^{-2}$)}} &{\scriptsize{($0.005  $)}} \\
\cline{2-8}
& \multirow{2}{*}{ 1000} & {\small $0.94\times 10^{-2}$ }   & {\small 0.82}&{\small $0.40\times 10^{-2}$ }&{\small 0.75} &{\small $0.59\times 10^{-2}$ }&{\small 0.69}  \\
  & &  {\scriptsize{($0.63\times 10^{-2}$)}}  & {\scriptsize{($0.04$)}}& {\scriptsize{($0.30\times 10^{-2}$)}}&{\scriptsize{($0.02 $)}}&{\scriptsize{($0.32\times 10^{-2}  $)}} & {\scriptsize{($0.03  $)}} \\
\cline{2-8}
& \multirow{2}{*}{ 5000} & {\small $0.62\times 10^{-2}$ }   & {\small 1.08}&{\small $0.20\times 10^{-2}$ }&{\small 0.94} &{\small $0.31\times 10^{-2}$ }&{\small 0.91}  \\
  & &  {\scriptsize{($0.62\times 10^{-2}$)}}  & {\scriptsize{($0.01$)}}& {\scriptsize{($0.16\times 10^{-2}$)}}&{\scriptsize{($0.02$)}}&{\scriptsize{($0.19\times 10^{-2}  $)}} & {\scriptsize{($0.02  $)}} \\
\hline
\hline
\multirow{6}{*}{$\mathcal{C}$}&  \multirow{2}{*}{ 500} & {\small $1.34\times 10^{-2}$ }   & {\small 0.45}&{\small $0.60\times 10^{-2}$ }&{\small 0.45 } &{\small $1.31\times 10^{-2}$ }&{\small 0.41}  \\
&   &  {\scriptsize{($0.43\times 10^{-2}$)}}  & {\scriptsize{($0.01$)}}& {\scriptsize{($0.25\times 10^{-2}$)}}&{\scriptsize{($ 0.01$)}}&{\scriptsize{($  0.24\times 10^{-2}$)}} & {\scriptsize{($ 0.02$)}} \\
\cline{2-8}
& \multirow{2}{*}{ 1000} & {\small $0.91\times 10^{-2}$ }   & {\small 0.59}&{\small $0.74\times 10^{-2}$ }&{\small 0.51} &{\small $0.94\times 10^{-2}$ }&{\small 0.45}  \\
  & &  {\scriptsize{($0.43\times 10^{-2}$)}}  & {\scriptsize{($0.02$)}}& {\scriptsize{($0.51\times 10^{-2}$)}}&{\scriptsize{($0.003  $)}}&{\scriptsize{($ 0.14\times 10^{-2} $)}} &{\scriptsize{($0.002$)}} \\
  \cline{2-8}
& \multirow{2}{*}{ 5000} & {\small $0.56\times 10^{-2}$ }   & {\small 0.81}&{\small $0.21\times 10^{-2}$ }&{\small 0.75 } &{\small $0.35\times 10^{-2}$ }&{\small 0.65}  \\
  & &  {\scriptsize{($0.33\times 10^{-2}$)}}  & {\scriptsize{($0.05$)}}& {\scriptsize{($0.15\times 10^{-2}$)}}&{\scriptsize{($0.001  $)}}&{\scriptsize{($ 0.11\times 10^{-2} $)}} &{\scriptsize{($0.02$)}} \\
\hline
\end{tabular}\caption{\label{Table4} \footnotesize{Risks and selected cutoff of the penalized procedure with an additional logarithmic term in the penalty. Notation $\mathcal{M}$ stands for the mixture $0.7\mathcal{N}(4,1)+0.3\Gamma(4,\frac12).$ }}
\end{center}

\end{table}

\section{Decompounding\label{sec:exP}}

\subsection{Statistical setting }

Let $Z$ be a compound Poisson process with intensity $\lambda>0$ and jump density $f$, i.e.
$$Z_{t}:=\sum_{j=1}^{N_{t}}X_{j},\quad t\geq 0$$ where $N$ is an homogeneous Poisson process with intensity $\lambda$ and independent of the i.i.d. variables $(X_{j})$ with common density $f$. One trajectory of $Z$ is observed at sampling rate $\Delta$ over $[0,T]$, $T=n\Delta$, $n\in \N$. non-parametric estimation of $f$, or more generally of the Lévy density $\lambda f$ has been the subject of many papers, among others, \cite{MR2001642,coca2017efficient,comte2014nonparametric,Duval,van2007kernel} for decompounding, \cite{gugushvili2014nonparametric} for the multidimensional setting, and \cite{bec2015adaptive,comte2010nonparametric,gugushvili2012nonparametric,Kappus14,NR09}; a review is also available in the textbook \cite{BCGM} for the non-parametric estimation of the Lévy density. 

We observe $Z$ at the time points $j\Delta, \ j=1, \ldots, n$, for $\Delta>0$, denote the $j-$th increment by $Y_{j\Delta}=Z_{j\Delta}-Z_{(j-1)\Delta}$. We aim at estimating $f$ from the increments $(Y_{j\Delta},j=1,\ldots,n)$. Consider $\phi$ the characteristic function of $X_{1}$ and $\phi_\Delta$ the characteristic function of $Z_{\Delta}=Y_{\Delta}$. The Lévy-Kintchine formula relates them as follows
\[
\phi_\Delta(u) =\exp\big(\Delta  \lambda( \phi(u) - 1)    \big),\quad u\in\R. 
\] As $Z$ is a compound Poisson process, $|\phi_{\Delta}|$ is  bounded from below by $e^{-2  \Delta}$, which remains bounded away from 0. Moreover, if $\E[|X_{1}|]<\infty$ it holds that $\phi$ is differentiable and we can then define the distinguished logarithm of $\phi_{\Delta}$ (see Lemma 1 in \cite{duval2017nonparametric})  \begin{align}
\label{eq:Ff}\phi(u)=1+\frac{1}{ \lambda\Delta}\Log\big(\phi_{\Delta}(u)\big),\quad\mbox{where }\Log(\phi_{\Delta}(u))=\int_{0}^{u}\frac{\phi_{\Delta}^{\prime}(z)}{\phi_{\Delta}(z)}\d z,\quad u\in\R.
\end{align} 
For simplicity, we assume that the intensity $\lambda$ is known: $\lambda=1$. 
Following \eqref{eq:Ff}, an estimator of $\phi$ is hence given by 
\begin{align}\label{eq fhatstar1}
\hat{\phi}_{n}(u)  =1+ \frac{1 }{ \Delta}\Log (\hat{\phi}_{\Delta,n}(u) ),\quad u\in\R
\end{align}
with 
\[
\Log( \hat{\phi}_{\Delta,n}(u) ): = \int_0^u  \frac{\hat{\phi}_{\Delta,n}'(z)}{ \hat{\phi}_{\Delta,n}(z)         }  \d z,\quad \hat{\phi}_{\Delta,n}(z)=\frac{1}{n}\sum_{j=1}^{n}e^{izY_{j\Delta}}\ \mbox{and}\ \hat{\phi}_{\Delta,n}'(z)=\frac{1}{n}\sum_{j=1}^{n}iY_{j\Delta}e^{izY_{j\Delta}}.
\]
The quantity $\Log(\hat\phi_{\Delta,n})$ appearing in $\hat\phi_{n}$ might be unbounded: if $\phi_{\Delta}$ never cancels, it may not be the case of its estimator $\hat\phi_{\Delta,n}$. Usually, to prevent this issue a local threshold is used and $\frac{1}{\hat\phi_{\Delta,n}(v)}$ is replaced with $\frac{1}{\hat\phi_{\Delta,n}(v)}\mathds{1}_{|\hat\phi_{\Delta,n}(v)|>r_{n}}$, for some vanishing sequence $r_{n}$ (see {e.g.} Neumann and Rei\ss~\cite{NR09}). Here we do not use a local threshold inside the integral, but a global threshold so that $\widehat{\Log(\phi_{\Delta,n})}=\Log(\hat\phi_{\Delta,n})$. Define \begin{align}\label{eq fhatstar2}
\widetilde\phi_{n}(u):=\hat\phi_{n}(u)\mathds{1}_{|\hat\phi_{n}(u)|\leq 4},\quad u\in\R,
\end{align} where $\hat\phi_{n}$ is given by \eqref{eq fhatstar1}. The choice of a threshold equal to 4 is technical (see the proof of Theorem \ref{thm Poisson}).  Cutting off in the spectral domain an applying a Fourier inversion provides the estimator if $f$
\begin{align}\label{eq:fhatP}
\hat{f}_{m,\Delta} (x) =\frac{1}{2\pi}  \int_{-m}^{m} e^{-iux}  \tilde{\phi}_{n}(u) \d u,\quad x\in\R. \end{align}

\subsection{Adaptive upper bound}

\subsubsection{Upper bound and discussion on the rate.}

\begin{theorem}  \label{thm Poisson} 
Assume that $\E[X_{1}^{2}]<\infty$, $\Delta\leq\frac14\log(n\Delta)$ and $n\Delta\to\infty$ as $n\to\infty.$ Then, for any  $m\geq 0$  it holds
\begin{align*}\E\big[\| \widehat{f}_{m,\Delta}-f\|^2\big]&\leq\|f_m-f\|^2+\frac{2}{{n\Delta}}\int_{-m}^{m}\frac{\d u}{|\phi_{\Delta}(u)|^{2}}+2\ 5^{2}\E[X_{1}^{2}]\frac{m}{n\Delta}+2^{3}5^{2} \frac{m^{2}}{(n\Delta)^{2}}. \end{align*}\end{theorem}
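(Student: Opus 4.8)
The plan is to reduce the statement, via Parseval's formula, to a pointwise-in-$u$ control of the variance of $\widetilde\phi_n$. Since $\Fourier\widehat{f}_{m,\Delta}=\widetilde\phi_n\mathds{1}_{[-m,m]}$ and $\Fourier f_m=\phi\,\mathds{1}_{[-m,m]}$, so that $\Fourier(f-f_m)$ is supported on $[-m,m]^c$, one gets
\[
\E\big[\|\widehat{f}_{m,\Delta}-f\|^2\big]=\|f_m-f\|^2+\frac1{2\pi}\int_{-m}^{m}\E\big[|\widetilde\phi_n(u)-\phi(u)|^2\big]\d u .
\]
First I would cash in the global threshold at level $4$: because $|\widetilde\phi_n|\le4$, $|\phi|\le1$, and $\widetilde\phi_n(u)=\hat\phi_n(u)$ on $\{|\hat\phi_n(u)|\le4\}$, one has $|\widetilde\phi_n(u)-\phi(u)|\le5$ for every $u$ and $|\widetilde\phi_n(u)-\phi(u)|\le|\hat\phi_n(u)-\phi(u)|$ in general, hence $\E[|\widetilde\phi_n(u)-\phi(u)|^2]\le\E[\min(5^2,|\hat\phi_n(u)-\phi(u)|^2)]$. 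This is exactly the point of a global rather than a local threshold inside the integral defining $\Log(\hat\phi_{\Delta,n})$: on the realisations where $\hat\phi_{\Delta,n}$ dips very close to zero somewhere on $[0,u]$ or winds many times around the origin, so that $\hat\phi_n(u)$ is wild, the estimator is simply zeroed out and contributes at most $5^2$ times a probability.

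Next I would linearise the distinguished logarithm. With $\eta(v):=\big(\hat\phi_{\Delta,n}(v)-\phi_\Delta(v)\big)/\phi_\Delta(v)$ and $\phi_\Delta'/\phi_\Delta=\Delta\phi'$, one has a.s.
\[
\hat\phi_n(u)-\phi(u)=\frac1\Delta\big(\Log\hat\phi_{\Delta,n}(u)-\Log\phi_\Delta(u)\big)=\frac1\Delta\int_0^u\frac{\eta'(v)}{1+\eta(v)}\d v ,
\]
which, on the event $G_u:=\{\sup_{v\in[0,|u|]}|\eta(v)|\le\tfrac12\}$, equals $\Delta^{-1}\Log(1+\eta(u))$, so that $|\hat\phi_n(u)-\phi(u)|\le\Delta^{-1}(|\eta(u)|+|\eta(u)|^2)\le\tfrac32\Delta^{-1}|\eta(u)|$ there. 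Taking expectations, on $G_u$ the leading contribution is the natural variance
\[
\E\big[|\hat\phi_n(u)-\phi(u)|^2\mathds{1}_{G_u}\big]\le\frac{9}{4\Delta^2}\E|\eta(u)|^2=\frac{9}{4\Delta^2}\cdot\frac{1-|\phi_\Delta(u)|^2}{n|\phi_\Delta(u)|^2}\le\frac{9}{n\Delta|\phi_\Delta(u)|^2},
\]
where I used $1-|\phi_\Delta(u)|^2\le-2\log|\phi_\Delta(u)|=2\Delta(1-\Re\phi(u))\le4\Delta$; integrating over $[-m,m]$ and accounting for $1/(2\pi)$ this is dominated by $\frac2{n\Delta}\int_{-m}^{m}|\phi_\Delta(u)|^{-2}\d u$. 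Throughout I also use the L\'evy--Khinchine lower bound $|\phi_\Delta(u)|\ge e^{-2\Delta}$, which together with the hypothesis $\Delta\le\frac14\log(n\Delta)$ gives $|\phi_\Delta(u)|^{-2}\le e^{4\Delta}\le n\Delta$.

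The crux is then the contribution of $G_u^c$, i.e.\ of the realisations where $\hat\phi_{\Delta,n}$ wanders more than $\tfrac12|\phi_\Delta|$ away from $\phi_\Delta$ somewhere on $[0,|u|]$. There one keeps only $\min(5^2,\cdot)\le5^2$ and, on $G_u^c\cap\{|\hat\phi_n(u)|\le4\}$, uses that $\hat\phi_n(u)-\phi(u)$ is still of the controlled form $\Delta^{-1}\big(\Log(1+\eta(u))+2\pi i\,k(u)\big)$ with $k(u)$ the number of windings of $1+\eta$ around the origin along $[0,|u|]$; the relevant probabilities and moments are estimated with only $\E[X_1^2]<\infty$, via $\E|\hat\phi_{\Delta,n}'(v)-\phi_\Delta'(v)|^2\le n^{-1}\E[Z_\Delta^2]\le n^{-1}(\Delta+\Delta^2)\E[X_1^2]$ and $|\phi'(v)|\le\E|X_1|\le\E[X_1^2]^{1/2}$, the oscillations of $\hat\phi_{\Delta,n}$ and $\phi_\Delta$ being controlled by $n^{-1}\sum_j|Y_{j\Delta}|$ and $\Delta|\phi'|$ respectively, while the stray powers of $|\phi_\Delta|^{-1}$ are absorbed through $|\phi_\Delta|^{-1}\le(n\Delta)^{1/2}$ (this is where $\Delta\le\frac14\log(n\Delta)$ is used). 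After the outer integration over $u\in[-m,m]$ these produce the two remaining error terms $2\cdot5^2\E[X_1^2]\,\frac{m}{n\Delta}$ and $2^3 5^2\,\frac{m^2}{(n\Delta)^2}$, and collecting everything yields the claimed inequality. I expect this last step to be the main obstacle: the distinguished logarithm is a nonlinear functional of the empirical characteristic function whose denominator $\hat\phi_{\Delta,n}$ can be arbitrarily small, and since only a second moment of $X_1$ is available, no higher moments of $Z_\Delta$ can be invoked and a naive perturbation argument on the whole probability space fails; the threshold $4$ (to dispose of the pathological realisations by a trivial bound) and the constraint $\Delta\le\frac14\log(n\Delta)$ (to keep every surviving power of $|\phi_\Delta|^{-1}$ bounded by $(n\Delta)^{1/2}$) are precisely the two ingredients that make the estimate close.
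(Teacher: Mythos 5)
Your overall skeleton matches the paper's: Parseval, the trivial bound $|\tilde\phi_n-\phi|\le 5$ coming from the global threshold at $4$, a good event on which the distinguished logarithm is linearised by a series expansion giving $|\hat\phi_n(u)-\phi(u)|\lesssim \Delta^{-1}|\hat\phi_{\Delta,n}(u)-\phi_\Delta(u)|/|\phi_\Delta(u)|$, and the exact variance identity $\E|\hat\phi_{\Delta,n}(u)-\phi_\Delta(u)|^2=(1-|\phi_\Delta(u)|^2)/n\le (2\Delta\wedge 1)/n$ to recover the term $\frac{2}{n\Delta}\int_{-m}^m|\phi_\Delta(u)|^{-2}\d u$; that part of your argument is sound (and your inequality $|\tilde\phi_n-\phi|\le|\hat\phi_n-\phi|$ conveniently covers the fact that with your relative level $1/2$ the threshold at $4$ may still trigger when $\Delta$ is small, an issue the paper avoids by taking the relative level $(1\wedge\Delta)/2$).

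The genuine gap is the bad event. You define $G_u=\{\sup_{v\in[0,|u|]}|\hat\phi_{\Delta,n}(v)-\phi_\Delta(v)|/|\phi_\Delta(v)|\le\frac12\}$ and assert that $\int_{-m}^m 5^2\,\PP(G_u^c)\d u$ produces the terms $2\cdot5^2\E[X_1^2]\frac{m}{n\Delta}+2^35^2\frac{m^2}{(n\Delta)^2}$, with the probabilities "estimated with only $\E[X_1^2]<\infty$" and stray powers of $|\phi_\Delta|^{-1}$ absorbed via $|\phi_\Delta|^{-1}\le(n\Delta)^{1/2}$. This is precisely where the work lies, and in the form you set it up it fails in part of the regime the theorem covers. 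The event $G_u^c$ involves the deviation exceeding $\frac12|\phi_\Delta(v)|$, which can be as small as $\frac12 e^{-2\Delta}$; when $\Delta\to\infty$ at the allowed rate $\Delta\asymp\frac14\log(n\Delta)$ this is of order $(n\Delta)^{-1/2}$, i.e.\ \emph{below} the $n^{-1/2}$ noise level of $\hat\phi_{\Delta,n}$ as soon as $\Delta>1$: Hoeffding then gives an exponent of order $n e^{-4\Delta}\asymp 1/\Delta\to0$, and Chebyshev gives $\E|\eta(v)|^2\asymp \Delta e^{4\Delta}/n$, which is not small either; so $\PP(G_u^c)$ need not be $O\big(\frac{1}{n\Delta}+\frac{m}{(n\Delta)^2}\big)$ — for densities with $\Re\phi$ close to $-1$ somewhere it is in fact of order one — and your error terms cannot be recovered. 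The paper avoids this by decoupling the two scales: the good event $\Omega_{\zeta,\Delta}(m)$ is defined by the \emph{absolute} threshold $\zeta\sqrt{\log(n\Delta)/(n\Delta)}$ with $\zeta\asymp\sqrt{\Delta}$, whose complement is controlled (Lemma \ref{lem Omega}) by a discretisation on a grid of mesh $\asymp(n\Delta)^{-1/2}$, Hoeffding at the grid points (this is where the $\sqrt{\Delta\log(n\Delta)}$ factor is indispensable), and a Chebyshev bound on the Lipschitz constant $\frac1n\sum_j|Y_{j\Delta}|$ — the source of the constants $\E[X_1^2]\frac{m}{n\Delta}$ and $\frac{m^2}{(n\Delta)^2}$; only afterwards is the hypothesis $\Delta\le\frac14\log(n\Delta)$ used, deterministically, to guarantee $|\phi_\Delta(u)|\ge e^{-2\Delta}>\gamma_\Delta\sqrt{\log(n\Delta)/(n\Delta)}$, which converts the absolute bound into the relative bound $(1\wedge\Delta)/2$ needed for the Neumann series. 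Your sketch would go through for bounded $\Delta$ (with some constant-chasing to justify the stated constants), but as written it does not prove the theorem in the macroscopic range it claims; replacing $G_u$ by the paper's absolute-threshold event, and supplying the grid-plus-Lipschitz concentration lemma, is the missing ingredient.
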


The contraint $\Delta\leq \frac14\log (n\Delta)$ is fulfilled for any bounded $\Delta$ as $n\Delta\to\infty$. Moreover it allows $\Delta$ to be such that $\Delta:=\Delta_{n}\to 0$ and $\Delta_{n}\to \infty$, not too fast. This last point is interesting. To the knowledge of the author, an estimator that is optimal simultaneously when $\Delta$ is fixed or  vanishing and consistant, presumably optimal up to a logarithmic loss, when $\Delta$ tends to infinity, has not been investigated. Moreover, there are no results on the estimation of the jump density of a compound Poisson process when the sampling rate goes to infinity. In the remaining of this paragraph, we discuss the different rates of convergence implied by Theorem \ref{thm Poisson} according to the behavior of $\Delta$.

\paragraph{Discussion on the rates. } 
The upper bound derived in Theorem \ref{thm Poisson} is the sum of four terms: a bias, plus two variance terms $V\asymp\frac{e^{4\Delta}m}{n\Delta}$ (using that $|\phi_{\Delta}(u)|\geq e^{-2\Delta}$) and $V^{\prime}\asymp\frac{m}{n\Delta}$, which is always smaller or of the same order as $V$, and a remainder. 
Assume that $f$ lies is the Sobolev ball $\mathscr{S}(\beta,L)$ (see \eqref{eq:Sob}). Then, the bias $\|f-f_{m}\|^{2}$ has asymptotic order $m^{-2\beta}$ and we may derive the following rates of convergence.
\begin{itemize}
\item \textbf{Microscopic and mesoscopic regimes.} Let $\Delta=\Delta_{n}$ be such that $\Delta_{n}\rightarrow \Delta_{0}\in[0,\infty)$ such that $n\Delta_{n}\to \infty$. Then, the bias variance compromise leads to the choice $m^{\star}=(e^{-4\Delta_{0}}n\Delta_{0})^{\frac{1}{2\beta+1}}$ and to the rate of convergence $\big(e^{-4\Delta_{0}}n\Delta_{0})^{-\frac{2\beta}{2\beta+1}}$ that matches the optimal rates of convergence as $\Delta_{0}$ is fixed or tending to 0. Indeed, the rate is in $T^{-\frac{2\beta}{2\beta+1}}$ , with $T=n\Delta_{n}$ denoting the time horizon, it is clearly rate optimal as it corresponds to the optimal rate of convergence to estimate the jump density of a compound Poisson process from continuous observations ($\Delta=0$). The constant $e^{-4\Delta_{0}}$ appearing in the rate depends exponentially on $\Delta_{0}$, which asymptotically as little effect but in practice deteriorates the numerical performances. \item\textbf{Macroscopic regime.}
Let $\Delta=\Delta_{n}\rightarrow \infty$ such that $\Delta_{n}\leq\frac14\log (n\Delta_{n})$ The variance term $V$ tends to 0, so that the estimator is consistent. Heuristically, if $\Delta$ goes to infinity the central limit theorem states that $Y_{\Delta}$ is close in law to a parametric Gaussian variable, e.g. if $f$ is centered and with unit variance it holds that:
$\sqrt{\Delta}^{-1}{Y_{\Delta}}\xrightarrow[\Delta\to\infty]{d}\mathcal{N}(0,1).$ Consequently, the fact that $f$ can be constantly estimated is non trivial. Duval \cite{duval2014no} establishes that if $\Delta=O((n\Delta)^{\delta})$, for some $\delta\in(0,1)$, i.e. when $\Delta_{n}$ goes rapidly to infinity, there exists no consistent non-parametric estimator of $f$. The fact that estimation is impossible when $\Delta$ goes too rapidly to infinity was established through an asymptotic equivalence result. In this case it is always possible to build two different compound Poisson processes for which the statistical experiments generated by their increments are asymptotically equivalent.  Therefore, the result of Theorem \ref{thm Poisson} is new in that context.  We may distinguish two additional regimes:

\begin{enumerate}
\item
{Slow macroscopic regime.  } If ${\Delta_{n}=o\big(\log(n\Delta_{n})\big) },$ the choice $m^{\star}=\big(e^{-4\Delta_{n}}n\Delta_{n})^{\frac{1}{2\beta+1}}$ leads to the rate of convergence $\big(e^{-4\Delta_{n}}n\Delta_{n}\big)^{-\frac{2\beta}{2\beta+1}}.$ There is no lower bound in the literature to ensure if this rate is optimal. However if $\Delta$ goes slowly to infinity, for example if $\Delta_{n}=\log(\log(n\Delta_{n}))$, then the rate is $\big({(\log(n\Delta_{n}))^{-4}n\Delta_{n}}\big)^{-\frac{2\beta}{2\beta+1}},$ which is rate optimal, up to the logarithmic loss that may not be optimal.

\item
{Intermediate macroscopic regime.} Let $\Delta_{n}=\delta \log(n\Delta_{n})$, $0<\delta<1/4$, then $m^{\star}=(n\Delta_{n})^{\frac{1-4\delta}{2\beta+1}},$ leading to the rate $(n\Delta_{n})^{-\frac{2\beta(1-4\delta)}{2\beta+1}}.$ This rate deteriorates as $\delta$ increases. The limit $\delta=1/4$ imposed by Theorem \ref{thm Poisson} may not be optimal, no lower bound adapted to this case exists in the literature.\end{enumerate} 
The interest of the macroscopic regime is mainly theoretical as in practice if $\Delta$ is a large constant to get $e^{-4\Delta}n\Delta$ large one should consider a huge amount $n$ of observations. However, this regime enlightens the role of the sampling rate $\Delta$ in the non-parametric estimation of the jump density.
\end{itemize}

\subsubsection{Adaptive choice of the cutoff parameter}

We consider the optimal cutoff $\overline m_{n}$ given by
\[\overline m_{n}\in\underset{m\geq0}{\mbox{arginf}}\Big\{\|f_m-f\|^2+\frac{2}{{n\Delta}}\int_{-m}^{m}\frac{\d u}{|\phi_{\Delta}(u)|^{2}}+2\ 5^{2}\E[X_{1}^{2}]\frac{m}{n\Delta}+2^{3}5^{2} \frac{m^{2}}{(n\Delta)^{2}}\Big\}. \]
Following the previous strategy, the upper bound given by Theorem \ref{thm Poisson} is optimal, at least for $\Delta\to[0,\infty)$. The leading variance terms is in $\frac{me^{4\Delta}}{n\Delta}$, we differentiate in $m$ the upper bound to find that the optimal cutoff $m^{\star}\asymp\overline m_{n}$ is such that:
\[
|\phi(\overline m_{n})|^2 = \frac{e^{4\Delta} }{\Delta n}    \Leftrightarrow |\phi(\overline m_{n})|^2 = \frac{e^{4\Delta}}{ \Delta n},
\] which has an empirical version, we select $\hat{m}_{n}$ accordingly. As in the deconvolution setting, we modify the estimator $\tilde \phi_{n}$ in \eqref{eq fhatstar2} which is set to 0 when the estimator of $|\phi |$ is smaller that $1/\sqrt{n\Delta},$ meaning that the noise is dominant. 
Define \[
\overline\phi_{n}(u):=\tilde\phi_{n}(u)\mathds{1}_{|\tilde\phi_{n}(u)|\geq \kappa_{n,\Delta}/\sqrt{n\Delta}},\quad u\in\R
\] where $\kappa_{n,\Delta}:=(e^{2\Delta}+\kappa\sqrt{\log (n\Delta)})$, $\kappa>0$, and the new the estimator if $f$
\[
\overline{f}_{m,\Delta} (x) =\frac{1}{2\pi}  \int_{-m}^{m} e^{-iux}  \overline{\phi}_{n}(u) \d u,\quad x\in\R.
\]
Again, Lemma \ref{lem:tildehat} ensures that Theorem \ref{thm Poisson} holds for the estimator $\overline f_{m,\Delta}.$
Finally, we introduce the empirical threshold, for some $\alpha\in(0,1]$ and $\kappa>0$
\[\hat{m}_{n} = \max\Big\{m\geq0: |\overline{\phi}_{n}(m)|= \frac{\kappa_{n,\Delta}}{\sqrt{n\Delta}}  \Big\}\wedge (n\Delta)^\alpha.
\]

\begin{theorem} \label{thm:AP}Assume that $\E[X_{1}^{4}]<\infty$, $\Delta\leq\frac14\log(n\Delta)$ and $n\Delta\to\infty$ as $n\to\infty.$ Then, for a positive constant $C_{1}$, depending on $\kappa$, $\E[X_{1}^{2}]$ and $\E[X_{1}^{4}]$, and $C_2$ a constant depending on  $\E[X_{1}^{2}]$ and $\E[X_{1}^{4}]$, it holds
\begin{align*}
\E[  \| \overline{f}_{\hat m_{n},\Delta}- f\|^2  ]  \leq   C_{1}\underset{m\in[0,(n\Delta)^{\alpha}]}{\inf}&\Big\{\|{f}_{{m} }- f\|^2+  \frac{\log(n\Delta)m}{n\Delta} +\frac{1}{{n\Delta}}\int_{-m}^{m}\limits\frac{\d u}{|\phi_{\Delta}(u)|^{2}}+ \frac{{m}^{2}}{(n\Delta)^{2}}\Big\}\\&+C_{2}\Big(\frac{1}{n\Delta}+(n\Delta)^{\alpha-{\kappa^{2}\Delta^{2}}e^{-4\Delta}}\Big). 
\end{align*} \end{theorem}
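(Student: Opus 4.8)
The plan is to follow the strategy used for Theorem \ref{thm:AD}, adapted to the non-linear decompounding operator. Write $\overline{m}_n$ for the deterministic arginf appearing in the definition above (now with the $\log(n\Delta)$ factor and restricted to $[0,(n\Delta)^\alpha]$), so that by Theorem \ref{thm Poisson} and Lemma \ref{lem:tildehat} the risk of $\overline{f}_{\overline{m}_n,\Delta}$ is bounded by the infimum on the right-hand side. It then suffices to compare $\hat m_n$ with $\overline{m}_n$ on a high-probability event. First I would introduce the event
\[
\Omega_n:=\Big\{\sup_{|u|\le (n\Delta)^\alpha}\big|\hat\phi_{\Delta,n}(u)-\phi_\Delta(u)\big|\le c_1\tfrac{\sqrt{\log(n\Delta)}}{\sqrt{n\Delta}}e^{-2\Delta},\ \sup_{|u|\le (n\Delta)^\alpha}\big|\hat\phi'_{\Delta,n}(u)-\phi'_\Delta(u)\big|\le c_2\tfrac{(n\Delta)^\alpha\sqrt{\log(n\Delta)}}{\sqrt{n\Delta}}\Big\},
\]
and show, via Hoeffding's inequality applied pointwise together with a chaining or covering argument over $[-(n\Delta)^\alpha,(n\Delta)^\alpha]$ (using $\E[X_1^4]<\infty$ to control $\hat\phi'_{\Delta,n}$), that $\PP(\Omega_n^c)\lesssim (n\Delta)^{-p}$ for any polynomial order; the factor $e^{-2\Delta}$ and the scaling $\kappa_{n,\Delta}^2/(n\Delta)$ in the exponent $(n\Delta)^{\alpha-\kappa^2\Delta^2 e^{-4\Delta}}$ come precisely from the deviation bound for $|\overline\phi_n(m)-\phi(m)|$, which through \eqref{eq:Ff} inherits a $1/\Delta$ and a $1/|\phi_\Delta|\ge e^{2\Delta}$ amplification of the empirical error of $\hat\phi_{\Delta,n}$.

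On $\Omega_n$ one controls the distance between the estimated characteristic function and the true one: from $\hat\phi_n(u)=1+\Delta^{-1}\Log(\hat\phi_{\Delta,n}(u))$ and $\phi(u)=1+\Delta^{-1}\Log(\phi_\Delta(u))$, a Taylor expansion of $\Log$ (exactly as in Lemma 1 of \cite{duval2017nonparametric}) gives $|\hat\phi_n(u)-\phi(u)|\lesssim \Delta^{-1}\int_0^{|u|}|\phi_\Delta|^{-1}\,(\text{empirical errors})$, hence on $\Omega_n$ a bound of the form $|\overline\phi_n(u)-\phi(u)|\lesssim \sqrt{\log(n\Delta)/(n\Delta)}\,e^{2\Delta}|u|\wedge 1$ for $|u|\le (n\Delta)^\alpha$ (the truncation at level $4$ and at level $\kappa_{n,\Delta}/\sqrt{n\Delta}$ only helping). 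The next step is the two-sided comparison of the bandwidths. Since $|\phi|$ is continuous and, by the reasoning around \eqref{eq:equalD} transposed here, $\overline{m}_n$ essentially solves $|\phi(\overline m_n)|^2=e^{4\Delta}/(\Delta n)$ up to the extra $\log$ and the lower-order terms: on $\Omega_n$, $|\overline\phi_n(m)|$ is within $c\sqrt{\log(n\Delta)/(n\Delta)}e^{2\Delta}$ of $|\phi(m)|$ uniformly, so (i) at $m$ slightly above $\overline{m}_n$ one has $|\overline\phi_n(m)|<\kappa_{n,\Delta}/\sqrt{n\Delta}$, forcing $\hat m_n\le C\overline{m}_n$ (up to adjusting constants, using monotonicity of the relevant majorants in $m$ and the definition of $\hat m_n$ as a maximum), and (ii) at $m$ slightly below $\overline m_n$ one has $|\overline\phi_n(m)|\ge \kappa_{n,\Delta}/\sqrt{n\Delta}$, so $\hat m_n\ge c\overline{m}_n$. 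One then splits $\E[\|\overline f_{\hat m_n,\Delta}-f\|^2]$ according to whether $\hat m_n\le\overline m_n$ or $\hat m_n>\overline m_n$ (the usual oracle-inequality dichotomy: bias term is non-increasing in $m$, variance-type terms non-decreasing), bounding $\|\overline f_{\hat m_n,\Delta}-f_{\hat m_n}\|^2$ by $\frac1{2\pi}\int_{-\hat m_n}^{\hat m_n}|\overline\phi_n-\phi|^2$ and $\|f_{\hat m_n}-f\|^2$ by $\|f_{\overline m_n}-f\|^2$ in one case, and symmetrically in the other, so that everything is dominated by $C_1$ times the infimum evaluated at $\overline m_n$.

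Finally one collects the contribution of $\Omega_n^c$: there $\|\overline f_{\hat m_n,\Delta}-f\|^2\le \|\overline f_{\hat m_n,\Delta}\|^2+\|f\|^2\lesssim (n\Delta)^{2\alpha}\cdot 16 + \|f\|^2$ (since $|\overline\phi_n|\le 4$ and $\hat m_n\le (n\Delta)^\alpha$, Plancherel gives $\|\overline f_{\hat m_n,\Delta}\|^2\le \tfrac{1}{2\pi}\cdot 16\cdot 2(n\Delta)^\alpha$, so the crude bound is $\lesssim (n\Delta)^\alpha$), and multiplying by $\PP(\Omega_n^c)\lesssim (n\Delta)^{-\kappa^2\Delta^2 e^{-4\Delta}}$ (with the covering-number correction and the constants from Hoeffding absorbed, which is why the exponent is $\alpha-\kappa^2\Delta^2 e^{-4\Delta}$ rather than $2\alpha-\dots$) produces the remainder term $C_2(n\Delta)^{\alpha-\kappa^2\Delta^2 e^{-4\Delta}}$; the additional $C_2/(n\Delta)$ absorbs the $m^2/(n\Delta)^2$ and other lower-order pieces evaluated at the worst admissible $m\le (n\Delta)^\alpha$ together with the expectation of the squared empirical errors off $\Omega_n$ that are not already polynomially small. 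The main obstacle, as in \cite{duval2017nonparametric}, will be step two: getting a clean uniform deviation bound for $\Log(\hat\phi_{\Delta,n})$ — equivalently for $\overline\phi_n-\phi$ — with the correct $e^{2\Delta}$ dependence, because the distinguished logarithm is an integral of a ratio and one must simultaneously control $\hat\phi_{\Delta,n}$ away from $0$ (which is why the global threshold $|\hat\phi_n|\le 4$ in \eqref{eq fhatstar2} and the constraint $\Delta\le\frac14\log(n\Delta)$ are needed) and control $\hat\phi'_{\Delta,n}$, the latter requiring the fourth-moment assumption $\E[X_1^4]<\infty$ to make the empirical process governing $\hat\phi'_{\Delta,n}$ sub-Gaussian enough for the union bound over the covering of $[-(n\Delta)^\alpha,(n\Delta)^\alpha]$.
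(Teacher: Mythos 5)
Your plan does not go through as written, and the central obstacle is the two-sided bandwidth comparison. You assert that on the good event one can force $c\,\overline m_n\le \hat m_n\le C\,\overline m_n$ and then conclude by the usual monotonicity dichotomy. But $\hat m_n$ is defined as the \emph{largest} solution of $|\overline\phi_{n}(m)|=\kappa_{n,\Delta}/\sqrt{n\Delta}$ (capped at $(n\Delta)^{\alpha}$) precisely because $\phi$ and its estimator may oscillate (e.g.\ a uniform jump density, $|\phi(u)|\asymp|\sin u|/|u|$): $|\phi|$ can re-cross the noise level far beyond $\overline m_n$, so even on your event $\Omega_n$ the largest crossing point can be of much larger order than $\overline m_n$, and conversely $\overline m_n$ is an arginf of the risk bound whose identification with a crossing point of $|\phi|$ is only heuristic. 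Moreover, even granting $\hat m_n\ge c\,\overline m_n$ with $c<1$, monotonicity of the bias only gives $\|f_{\hat m_n}-f\|\le\|f_{c\overline m_n}-f\|$, which is not comparable to $\|f_{\overline m_n}-f\|$ without extra regularity. The paper's proof never compares $\hat m_n$ with $\overline m_n$: for an \emph{arbitrary} $m$ it splits on $\{\hat m_n<m\}$ and $\{\hat m_n\ge m\}$; on the first event the surplus bias over $[\hat m_n,m]$ is controlled using that $|\overline\phi_n|\le\kappa_{n,\Delta}/\sqrt{n\Delta}$ beyond $\hat m_n$ together with Theorem \ref{thm Poisson}, and on the second the surplus variance over $[m,\hat m_n]$ is split according to whether $|\phi(u)|$ exceeds $e^{2\Delta}/\sqrt{n\Delta}$, the first part being absorbed into the bias at $m$ and the second handled by a pointwise Hoeffding bound transferred through Lemma \ref{lem DL}. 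Some version of this surplus argument is needed; the bandwidth-sandwich route fails.

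Two further gaps concern your remainder accounting. First, you require a uniform deviation bound for $\hat\phi'_{\Delta,n}$ over $[-(n\Delta)^{\alpha},(n\Delta)^{\alpha}]$ with failure probability of arbitrary polynomial order under $\E[X_1^4]<\infty$; with only four moments the moment-driven part of such a bound cannot be pushed below order $(n\Delta)^{-2}$ (this is exactly what Lemma \ref{lem Omega}(2) delivers, and only for $\hat\phi_{\Delta,n}$), and in fact no control of $\hat\phi'_{\Delta,n}$ is needed at all: in Lemma \ref{lem DL} the derivative terms in the Neumann series integrate exactly, so the deviation of the distinguished logarithm is bounded by the pointwise deviation $|\hat\phi_{\Delta,n}(u)-\phi_\Delta(u)|/|\phi_\Delta(u)|$; the fourth moment is used only to improve $\PP(A(c)^c)$ in Lemma \ref{lem Omega}. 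Second, your last step multiplies the crude bound $\|\overline f_{\hat m_n,\Delta}-f\|^{2}\lesssim(n\Delta)^{\alpha}$ by $\PP(\Omega_n^c)$; with a Hoeffding-plus-covering probability this produces an exponent $2\alpha-\kappa^{2}\Delta^{2}e^{-4\Delta}$ (plus the non-improvable moment terms), and the extra factor $(n\Delta)^{\alpha}$ cannot be ``absorbed into constants''. The exponent $\alpha-\kappa^{2}\Delta^{2}e^{-4\Delta}$ in the statement arises differently: the pointwise Hoeffding probability $\PP\big(|\hat\phi_{\Delta,n}(u)-\phi_\Delta(u)|\ge|\phi_\Delta(u)|\kappa\Delta\sqrt{\log(n\Delta)/(n\Delta)}\big)\le 2(n\Delta)^{-\kappa^{2}\Delta^{2}e^{-4\Delta}}$ is integrated over the band $\{|u|\in[m,(n\Delta)^{\alpha}]\}$ with a bounded integrand (there $|\overline\phi_n-\phi|\le 5$), not multiplied by a crude risk bound on a global bad event. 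As written, your argument would yield a strictly worse remainder and relies on a deviation event whose claimed probability is unattainable under the stated moment assumption.
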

 If $\kappa>\frac{\sqrt{2}e^{2\Delta}}{\Delta}$ the last additional term is negligible, regardless the value $\alpha\leq 1$ and Theorem \ref{thm:AP} ensures that the adaptive estimator $\overline{f}_{\hat m_{n},\Delta}$ satisfies the same upper bound as in Theorem \ref{thm Poisson}.  Therefore, it is adaptive and rate optimal, up to a logarithmic term and the multiplicative constant $C_{1}$, in the microscopic and mesoscopic regimes defined above. In the macroscopic regimes such that $\Delta:=\Delta_{n}\to\infty$ such that $\Delta_{n}<\frac14\log(n\Delta_{n})$ as $n\to\infty$ the estimator is consistent. Note that to establish the adaptive upper bound we imposed a stronger assumption that $\E[X_{1}^{4}]<\infty$. 
 In the following numerical study, we recover that the procedure is stable in the choice of $\kappa$.

\subsection{Numerical results}

As for the deconvolution problem, we illustrate the performance of this adaptive estimator  for different densities $f$. We consider the same densities as for the deconvolution problem, the Cauchy density excepted as it is not covered by our procedure: it has infinite moments. We compute the adaptive $\lk^2$-risks of our procedure over 1000 Monte Carlo iterations for various values of $\kappa$. We consider $n=5000$ and the sampling interval $\Delta=1$. The results are represented on Figure \ref{Fig:Poiss}, we observe that the rates are small and stable regardless the value of $\kappa$ and the density considered.

\begin{figure}\begin{center}
\includegraphics[width=10cm,height=5cm]{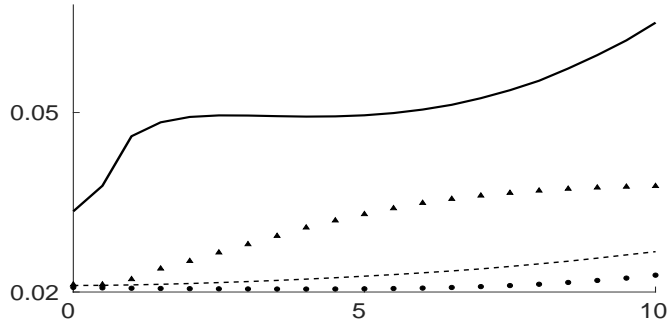}\end{center}
\caption{\label{Fig:Poiss}\footnotesize{\textbf{Decompounding:} Computations by $M=1000$ Monte Carlo iterations of the $\lk^2$-risks ($y$ axis) for different values of $\kappa\leq\frac12(\sqrt{n}-1)\sqrt{\log(n)}^{-1}=11.9$ ($x$ axis). Estimation of $f$ from $n=5000$ ($T=
5000$ and $\Delta=1$) increments of a compound Poisson process with intensity $\lambda=1$ and jump density $f$: Uniform $\mathcal{U}[1,3]$ (plain line), Gaussian $\mathcal{N}(2,1)$ (dots),  Gamma $\Gamma(2,1)$ (dotted line) and the mixture $0.7\mathcal{N}(4,1)+0.3 \Gamma(2,\frac{1}{2})$ (triangles). }}
\end{figure}

\section{Concluding remarks}

\paragraph*{Comments on the adaptive procedure.} In the present paper we develop an adaptive procedure that was successfully used in of Duval and Kappus \cite{duval2017nonparametric} which considers the problem of grouped data estimation. One observes i.i.d. realizations of $Y_{j}=X^{(1)}_{j}+\ldots+X^{(K)}_{j}$ where $K$ is a fixed and known integer and where the random variables $(X_{j}^{(1)},\ldots,X_{j}^{(K)})_{1\leq j\leq n}$ are i.i.d. This problem is a particular deconvolution problem where the density of the noise is unknown and depends on the density of interest. Here we adapt the procedure to two other classical inverse problems: deconvolution and decompounding. In each cases the resulting adaptive estimator is proven rate optimal, up to a logarithmic factor, for the $\lk^{2}$-risk.

Both in the grouped data setting (see  \cite{duval2017nonparametric}) and in the deconvolution setting (see Section \ref{sec:exD}) the computation of the adaptive cutoff, after simplifications, involves the set \begin{align}\hat m_{n}\in\big\{|\hat\phi_{Y,n}(u)|=\frac1{\sqrt{n}}(1+\kappa\sqrt{\log n})\big\},\quad \kappa>0\label{mdiscuss}\end{align}
 and not the characteristic function of $X_{1}$, nor the one of the errors in the deconvolution problem, nor the inverse of the operator relation $\phi_{Y}$ to $\phi_{X}$. However there are some differences between the grouped data setting where a uniform control on the characteristic function was needed and the deconvolution framework.  We only have pointwise control here, this difference is due to the fact that in the grouped data setting the density $f$ both played the role of the quantity of interest and the density of the noise; in \cite{duval2017nonparametric} we needed to bound it from above and below.
 
 In the decoumpounding setting the computation of the adaptive cutoff involves the em\-pi\-rical characteristic function of the jump density, which is more challenging than in the latter case. It does not seem obvious to have a simplified version as \eqref{mdiscuss} even though numerically selecting $\hat m_{n}$ this way seems relevant.

\paragraph*{Decompounding setting.} In the present paper we exhibit an adaptive rate optimal (up to a logarithmic factor) estimator of the jump density of a compound Poisson process from the discrete observation of its increments at sampling rate $\Delta=\Delta_{n}\to[0,\infty).$ It allows $\Delta_{n}\to \infty$ such that $\Delta_{n}<1/4\log(n\Delta_{n})$, our estimator remains consistent and optimal up to a logarithmic factor in some cases, e.g. if $\Delta_{n}=O\big(\log \log(n\Delta_{n})\big)$. Using \cite{duval2014no}, consistent non-parametric estimation of the jump density is impossible if $\exists \delta>0,\ \Delta_{n}=O(n\Delta_{n})^{\delta}$, the remaining questions are what happens in between and if the log loss in the upper bound that appears when $\Delta_{n}\to\infty$ is avoidable or not. The constant $1/4$ in the constant $\Delta_{n}<1/4\log(n\Delta_{n})$ of Theorem \ref{thm Poisson} can probably be improved.

	\section{Proofs\label{sec:proof}}

\subsection{Proof of Theorem \ref{thm:AD}}
Let $m\geq 0$ be fixed. First, consider the event  $\mathcal{E}=\{\hat{m}_{n} <m\}$, on this event we control the surplus in the bias of the estimator $\tilde f_{\hat m_{n}}$. Using the inequality 
\[
|\phi_X|^2   \leq 2 \frac{|\hat{\phi}_{Y,n}|^2}{|\phi_\eps|^2} 
+ 2\frac{ |\phi_Y- \hat{\phi}_{Y,n}|^2}{|\phi_\eps|^2} ,
\]
along with the definition of $\hat{m}_{n}$, gives 
\begin{align}
 \E\Big[\mathds{1}_{\mathcal{E}}\hspace{-0.5cm}  \int_{|u|\in[\hat{m}_{n},m]} \limits \hspace{-0.5cm} |\phi_X(u)|^2 \d u  \Big]
&\leq 2 \E\Big[ \mathds{1}_{\mathcal{E}}\hspace{-0.5cm}  \int_{|u|\in[\hat{m}_{n},m]} \limits\hspace{-0.5cm} \frac{  \kappa_n^2 n^{-1} }{|\phi_\eps(u)|^2} \d u\,      \Big] +   \int_{-m}^{m}
\frac{  \E[ |\hat{\phi}_{Y,n}(u)- \phi_Y(u)|^2 ] } {|\phi_\eps(u)|^2 } \d u\nonumber  \\
&\leq    \int_{-m}^{m} \frac{  2(\kappa_n^2+1) n^{-1} }{|\phi_\eps(u)|^2} \d u.\nonumber
\end{align}
Recall that $\kappa_{n}=1+\kappa\sqrt{\log(n)}$ and \eqref{eq:UBD}. This implies immediately that, on the event $\mathcal{ E}$, for a positive constant $C$ depending only on the choice of $\kappa$, 
\[
\E[  \|\tilde{f}_{\hat{m}_{n} }- f\|^2 \mathds{1}_\mathcal{E}  ] \leq   \|{f}_{{m} }- f\|^2+ C \frac{\log n}n \int_{-m}^{m}  \frac{ \d u.}{|\phi_\eps(u)|^2}.
\]
Second,  consider the complement set $\mathcal{E}^c$, where we control the surplus in the variance of $\tilde f_{\hat m_{n}}$.  By the definition of $\hat{m}_{n}$, it holds
\begin{align*}
 \E  \Big[ \int_{|u|\in[m,\hat{m}_{n}] }\limits \hspace{-0.5cm}  |\tilde{\phi}_{X,n}(u)&- \phi_X(u)|^2 \mathds{1}_{\{ |\phi_Y(u)| > n^{-1/2} \} } \d u  \mathds{1}_{\mathcal{E}^c}  \Big]
  \\&\leq   \int_{|u|\in[m,n^\alpha] }\limits \hspace{-0.5cm}  \frac{ \E[|\tilde{\phi}_{Y,n}(u)- \phi_Y(u)|^2]}{|\phi_\eps(u)|^2} \mathds{1}_{\{ |\phi_Y(u)| > n^{-1/2} \}} \d u.
\end{align*} On the event $\{ |\phi_Y(u)| > n^{-1/2} \}$, we derive that 
\[ \E[|\tilde{\phi}_{Y,n}(u)- \phi_Y(u)|^2]\leq|\phi_{Y}(u)|^{2}+ \E[|\hat{\phi}_{Y,n}(u)- \phi_Y(u)|^2] \leq|\phi_{Y}(u)|^{2}+\frac1n\leq 2|\phi_{Y}(u)|^{2}. \]
Consequently, we get
\[
\E  \Big[ \hspace{-0.5cm} \int_{|u|\in[m,\hat{m}_{n}] } \limits\hspace{-0.5cm}  |\tilde{\phi}_{X,n}(u)- \phi_X(u)|^2\,  \mathds{1}_{ \{|\phi_Y(u)| > n^{-1/2}\}}  \d u \,  \mathds{1}_{\mathcal{E}^c}  \Big]   \leq 2  \int_{[-m,m]^{c}} |\phi_X(u)|^2 \d u. 
\]Next, using that $|\tilde{\phi}_{X,n}(u)|\leq 1$, we derive that \begin{align*}
\E  \Big[ \hspace{-0.5cm} \int_{|u|\in[m,\hat{m}_{n}] } \limits\hspace{-0.5cm}  |&\tilde{\phi}_{X,n}(u)- \phi_X(u)|^2 \mathds{1}_{\{|\phi_Y(u)| \leq  n^{-1/2}\}}\d u \,  \mathds{1}_{\mathcal{E}^c}  \Big] \\
\leq &   \int_{|u|\in[m,n^\alpha] }\limits  \hspace{-0.5cm}  |\phi_X(u)|^2 \d u  + 4 \hspace{-0.5cm} \int_{|u|\in[m,n^\alpha] }\limits \hspace{-0.5cm} \PP(|\hat\phi_{Y,n}(u)|\geq \kappa_{n}n^{-1/2})\mathds{1}_{\{|\phi_{Y}(u)|\leq n^{-1/2}\}} \d u  \\
\leq &   \int_{|u|\in[m,n^\alpha] }\limits  \hspace{-0.5cm}  |\phi_X(u)|^2 \d u  + 4\hspace{-0.5cm}  \int_{|u|\in[m,n^\alpha] }\limits\hspace{-0.5cm}  \PP( |\hat{\phi}_{Y,n}(u) - \phi_Y(u)| > \kappa (\log n/n)^{1/2}  )   \d u\\   \leq& \int_{u\in[m,m]^{c} }\limits\hspace{-0.5cm}   |\phi_X(u)|^2 \d u +  8  n^{ \alpha - \kappa^2/2}. 
 \end{align*}The last inequality is a direct consequence of the Hoeffding inequality. 
Putting the above together, we have shown that for universal positive constants $C_1$ and  $C_3$ and  a constant $C_2$ depending only on $ \kappa$, for all $m\geq 0$,
\[
\E[  \| \tilde{f}_{\hat m_{n}}- f\|^2  ]  \leq  C_1 \|f-f_{m}\|^{2}   + C_2\frac{ \log n}n \int_{-m }^{m}  \frac{{\d u} }{|\phi_\eps(u)|^2}   +  C_3 n^{\alpha - \kappa^2/2} .   
\]
Taking the infimum over $m$ completes the proof.   \hfill$ \Box$

\subsection{Proof of Theorem \ref{thm Poisson}}
Proof of Theorem \ref{thm Poisson} uses similar arguments as the proof of Theorem 1 of Duval and Kappus \cite{duval2017nonparametric}. However, estimator \eqref{eq:fhatP} is different from the estimator studied in  \cite{duval2017nonparametric} and we need to take into account the additional parameter $\Delta$ that needs to be carefully handled.

\subsubsection{Preliminaries}

We establish two technical Lemmas used in the proof of Theorem \ref{thm Poisson}.

\begin{lemma}\label{lem Omega}
Let $m>0$ and $\zeta>0$ and define the event
\[
\Omega_{\zeta,\Delta}(m):=\Big\{\forall u\in[-m,m],\ \big|\hat\phi_{\Delta,n}(u)-\phi_{\Delta}(u)\big|\leq \zeta\sqrt{\frac{\log(n\Delta)}{n\Delta}}\Big\}.
\]
\begin{enumerate}
\item
If $\E[X_{1}^{2}]$ is finite, then, the following holds for $\eta>0$ and any $\zeta>\sqrt{\Delta(1+2\eta)}$,
\[\PP\big(\Omega_{\zeta,\Delta}(m)^{c}\big)\leq \frac{\E[X_{1}^{2}]}{n\Delta}+4\frac {m}{(n\Delta)^{\eta}}.\]
\item
If $\E[X_{1}^{4}]$ is finite, then, the following holds for $\eta>0$ and any $\zeta>\sqrt{\Delta(1+2\eta)}$,
\[\PP\big(\Omega_{\zeta,\Delta}(m)^{c}\big)\leq \frac{C}{(n\Delta)^{2}}+4\frac {m}{(n\Delta)^{\eta}},\]
\end{enumerate} where $C$ depends on $\E[X_{1}^{4}]$ and $\E[X_{1}^{2}]$.
\end{lemma}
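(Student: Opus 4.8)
The plan is to bound the probability of the complement event $\Omega_{\zeta,\Delta}(m)^c$ by first passing from a uniform (over $u\in[-m,m]$) deviation bound to a pointwise one via a chaining/grid argument, and then controlling the pointwise deviation of $\hat\phi_{\Delta,n}$ by a Bernstein- or Hoeffding-type inequality. The key observation is that $\hat\phi_{\Delta,n}(u)-\phi_\Delta(u)=\frac1n\sum_{j=1}^n\big(e^{iuY_{j\Delta}}-\phi_\Delta(u)\big)$ is an average of i.i.d., centered, bounded complex random variables with modulus at most $2$, and with variance bounded by $\mathbb{V}(e^{iuY_{j\Delta}})\le 1$. Since the summands are bounded by $2/n$ and have variance of order $1/n$ per term, applying Bernstein's inequality (to real and imaginary parts separately, then combining) yields, for each fixed $u$,
\[
\PP\Big(\big|\hat\phi_{\Delta,n}(u)-\phi_\Delta(u)\big|> t\Big)\le 4\exp\!\Big(-\frac{c\, n t^2}{1+t}\Big)
\]
for a universal constant $c$; at the scale $t=\zeta\sqrt{\log(n\Delta)/(n\Delta)}$ this is of order $\exp(-c'\zeta^2\log(n\Delta)/\Delta)=(n\Delta)^{-c'\zeta^2/\Delta}$, which is summable once $\zeta$ is large enough relative to $\Delta$. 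This explains the threshold $\zeta>\sqrt{\Delta(1+2\eta)}$: it is exactly what is needed so that the pointwise bound is $\lesssim (n\Delta)^{-(1+2\eta)}$ or so, leaving room for the union-bound loss over the grid.

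Next I would handle the passage from pointwise to uniform. The standard device is to discretize $[-m,m]$ into a grid of $O((n\Delta)^{a} m)$ points for a suitable power $a$, control the deviation at each grid point by the union bound (paying a factor equal to the grid cardinality against the pointwise exponential bound), and then control the oscillation of $u\mapsto \hat\phi_{\Delta,n}(u)-\phi_\Delta(u)$ between consecutive grid points. The derivative in $u$ of the empirical characteristic function is $\frac1n\sum_j iY_{j\Delta}e^{iuY_{j\Delta}}$, whose modulus is bounded by $\frac1n\sum_j|Y_{j\Delta}|$; here is where the moment assumption enters, since $\E|Y_{j\Delta}|$ (respectively a higher moment) is finite because $\E[X_1^2]<\infty$ (respectively $\E[X_1^4]<\infty$) and $Y_\Delta$ is a compound Poisson increment with finitely many jumps in expectation. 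Using Markov's inequality on $\frac1n\sum_j|Y_{j\Delta}|$ — or on $\frac1n\sum_j Y_{j\Delta}^2$ in the fourth-moment case — produces the term $\E[X_1^2]/(n\Delta)$ in case (i) and $C/(n\Delta)^2$ in case (ii); a grid fine enough makes the Lipschitz oscillation negligible on the event that this empirical average is controlled, while the surviving contribution of the grid points gives the $4m/(n\Delta)^\eta$ term.

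The main obstacle I anticipate is bookkeeping the interplay between the grid fineness, the moment-based control of the empirical derivative, and the Bernstein exponent so that all three contributions collapse to exactly the two stated terms with the stated constant $4$ and exponent $\eta$ — in particular, getting the factor multiplying $m$ to be clean and making sure the first term is $\E[X_1^2]/(n\Delta)$ and not merely $O(1/(n\Delta))$. A secondary subtlety is that $\Delta$ appears both in the variance scale (through $n\Delta$ being the effective sample size, since $T=n\Delta$) and in the tail exponent, so the condition $\zeta>\sqrt{\Delta(1+2\eta)}$ must be tracked carefully; the constraint $\Delta\le\frac14\log(n\Delta)$ from Theorem \ref{thm Poisson} is presumably what guarantees that such a $\zeta$ can be taken while keeping $\kappa_{n,\Delta}$ of a reasonable size later, but within this lemma itself $\zeta$ is a free parameter, so the inequality is purely a matter of the concentration estimate. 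Once these pieces are assembled, the two claimed bounds follow by taking the grid power $a$ large enough and invoking the respective moment assumption for the oscillation term.
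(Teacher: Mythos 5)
Your plan is essentially the paper's own argument: discretize $[-m,m]$ into a grid, apply Hoeffding (the paper does not even need Bernstein, since $|e^{iuY_{j\Delta}}|\leq 1$) with a union bound at the grid points -- which is exactly where the condition $\zeta>\sqrt{\Delta(1+2\eta)}$ arises, since the pointwise bound is $(n\Delta)^{-\tau^{2}/(2\Delta)}$ -- and control the oscillation between grid points through the Lipschitz bound $|\hat\phi_{\Delta,n}'(u)|\leq \frac1n\sum_j |Y_{j\Delta}|$, whose deviation is handled by a Chebyshev-type moment bound yielding the $\E[X_1^2]/(n\Delta)$ term (and $C/(n\Delta)^2$ under four moments). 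The only small correction: in case (ii) the paper bounds the \emph{fourth} moment of the centered sum $\sum_j(|Y_{j\Delta}|-\E|Y_{\Delta}|)$ to reach $(n\Delta)^{-2}$, rather than a plain second-moment Markov bound on $\frac1n\sum_j Y_{j\Delta}^2$, which would not give that rate.
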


\begin{proof}[Proof of Lemma \ref{lem Omega}]
Consider the events
\begin{align*}
A(c):=&\Big\{\Big|\frac{1}{n}\sum_{j=1}^{n}|Y_{j\Delta}|-\E[|Y_{\Delta}|]\Big|\leq c\Big\}\\
B_{h,\tau}(m):=&\Big\{\forall |k|\leq \Big\lceil\frac{m}{h}\Big\rceil,\ \big|\hat\phi_{\Delta,n}(kh)-\phi_{\Delta}(kh)\big|\leq \tau\sqrt{\frac{\log(n\Delta)}{n\Delta}}\Big\}
\end{align*} for some positive constants $c,h$ and $\tau$ to be determined. First, using that $x\rightarrow e^{iux}$ is 1-Lipschitz and that $\E[|Y_{\Delta}|]\leq \Delta\E[|X_{1}|]$ we get on the event $A(c)$ 
\begin{align}
\label{eq lemO1}\big|\hat\phi_{\Delta,n}(u)-\hat\phi_{\Delta,n}(u+h)\big|\mathds{1}_{A(c)}\leq h\big(\Delta\E[|X_{1}|]+c\big),\quad \forall u\in\R,\ h>0.
\end{align} If $\E[X_{1}^{2}]$ is finite the Markov inequality and the bound $\V[|Y_{\Delta}|]\leq \V[Y_{\Delta}]= \Delta\E[X_{1}^{2}]$ lead to 
\begin{align}
\label{eq lemO2}\PP\big(A(c)^{c}\big)\leq \frac{\Delta\E[X_{1}^{2}]}{c^{2}n}.
\end{align} If $\E[X_{1}^{4}]$ is finite \eqref{eq lemO2} can be improved using that \[\E\Big[\Big(\sum_{j=1}^{n}(|Y_{j\Delta}|-\E[|Y_{\Delta}|])\Big)^{4}\Big]\leq n\Delta^{2}\E[X_{1}^{4}]+3n(n-1)\Delta^{2}\E[X_{1}^{2}] \], leading to 
\begin{align}
\label{eq lemO3}\PP\big(A(c)^{c}\big)\leq C\frac{\Delta^{2}}{c^{4}n^{2}},
\end{align} where $C$ is a constant depending on $\E[X_{1}^{4}]$ and $\E[X_{1}^{2}]$.
 Second, we have that
\begin{align}
\PP\big(B_{h,\tau}(m)^{c}\big)&\leq\PP\bigg(\exists\ |k|\leq \Big\lceil\frac{m}{h}\Big\rceil,\ \big|\hat\phi_{\Delta,n}(kh)-\phi_{\Delta}(kh)\big|> \tau\sqrt{\frac{\log(n\Delta)}{n\Delta}}\bigg)\nonumber\\
&\leq \sum_{k=-\lceil{m}/{h}\rceil}^{\lceil{m}/{h}\rceil}\PP\Big(\big|\hat\phi_{\Delta,n}(kh)-\phi_{\Delta}(kh)\big|> \tau\sqrt{\frac{\log(n\Delta)}{n\Delta}}\Big)\nonumber\\
&\leq \sum_{k=-\lceil{m}/{h}\rceil}^{\lceil{m}/{h}\rceil} 2\exp\Big(-\frac{\tau^{2}\log(n\Delta)}{2\Delta}\Big)=4\Big\lceil\frac{m}{h}\Big\rceil (n\Delta)^{-\tau^{2}/(2\Delta)}\nonumber
\end{align}where the last inequality is obtained applying the Hoeffding inequality. 
Let $|u|\leq m$, there exists $k$ such that $u\in[kh-\frac{h}{2},kh+\frac{h}{2}]$ and we can write that
\begin{align*}
\mathds{1}_{A(c)\cap B_{h,\tau}(m)}\big|\hat\phi_{\Delta,n}(u)-\phi_{\Delta}(u)\big|&\leq \mathds{1}_{A(c)\cap B_{h,\tau}(m)}\Big(\big|\hat\phi_{\Delta,n}(u)-\hat\phi_{\Delta,n}(kh)\big|+\big|\hat\phi_{\Delta,n}(kh)-\phi_{\Delta}(kh)\big|\\&\hspace{3cm}+\big|\phi_{\Delta}(kh)-\phi_{\Delta}(u)\big|\Big).
\end{align*}
Using \eqref{eq lemO1}, the definition of $B_{h,\tau}(m)$ and that $x\rightarrow e^{iux}$ is 1-Lipschitz, lead to
\begin{align}\label{eq lemO4}
\mathds{1}_{A(c)\cap B_{h,\tau}(m)}\underset{u\in[-m,m]}{\sup}\big|\hat\phi_{\Delta,n}(u)-\phi_{\Delta}(u)\big|&\leq 2h\Delta\E[|X_{1}|]+hc+\tau\sqrt{\frac{\log(n\Delta)}{n\Delta}}.
\end{align}
Taking $c=\Delta$, $h=o\big(\sqrt{\frac{\log(n\Delta)}{n\Delta}}\big)$ such that $h>1/\sqrt{n\Delta}$ and $\zeta>\tau$, \eqref{eq lemO4} shows that  $A(c)\cap B_{h,\tau}(m)\subset \Omega_{\zeta,\Delta}(m).$ Moreover, it follows from $h>1/\sqrt{n\Delta}$, \eqref{eq lemO1} and \eqref{eq lemO2} that, for all $\eta>0$
\begin{align*}
\PP\big(\Omega^{c}_{\zeta,\Delta}(m)\big)&\leq \PP\big(A^{c}(\Delta)\big)+\PP\big(B^{c}_{h,\tau}(m)\big)\leq \frac{\E[X_{1}^{2}]}{n\Delta}+4\Big\lceil\frac{m}{h}\Big\rceil (n\Delta)^{-\frac{\tau^{2}}{2\Delta}}\leq \frac{\E[X_{1}^{2}]}{n\Delta}+4m(n\Delta)^{\frac{\Delta-\tau^{2}}{2\Delta}}.
\end{align*} Finally, choosing $\tau^{2}=\Delta(1+2\eta)$ leads to the result. The second inequality is obtained follows from similar arguments using \eqref{eq lemO2} instead of \eqref{eq lemO3}.
\end{proof}

\begin{lemma}\label{lem DL}Let  $\gamma>0$, define 
\[M_{n,\Delta}^{ (\gamma)}:= \min\big\{m\geq 0: |\phi_{\Delta}(m)|= \gamma\sqrt{{\log(n\Delta)}/{(n\Delta)}} \big\},
 \]with the convention $\inf\{\emptyset\}=+\infty$.
Take $\gamma> \zeta>0$, then, we have 
\[
\mathds{1}_{|u|\leq M_{n,\Delta}^{\gamma}\wedge m ,\Omega_{\zeta,\Delta}(m)}\Big|\Log(\hat\phi_{\Delta,n}(u))-\Log(\phi_{\Delta}(u))\Big| \leq \frac{\gamma}{\zeta} \log\Big(\frac{\gamma}{\gamma-\zeta}\Big)\frac{|\hat\phi_{\Delta,n}(u)-\phi_{\Delta}(u)|}{|\phi_{\Delta}(u)|}.\]
\end{lemma}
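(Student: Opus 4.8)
The plan is to recognise the difference of the two distinguished logarithms as the principal logarithm of the ratio $g:=\hat\phi_{\Delta,n}/\phi_{\Delta}$ evaluated at $u$, and then to bound that logarithm by an elementary scalar inequality; the only subtlety is checking that no spurious integer multiple of $2\pi i$ is produced. First I would record that, on $\Omega_{\zeta,\Delta}(m)$ and for $|u|\le M_{n,\Delta}^{(\gamma)}\wedge m$, the ratio stays in a small disk around $1$ along the whole segment from $0$ to $u$. Indeed, for $|z|\le M_{n,\Delta}^{(\gamma)}$ the continuity of $|\phi_{\Delta}|$, the value $|\phi_{\Delta}(0)|=1$ (which exceeds the threshold $\gamma\sqrt{\log(n\Delta)/(n\Delta)}$ for $n\Delta$ large), and the definition of $M_{n,\Delta}^{(\gamma)}$ as a \emph{first} crossing of that threshold force $|\phi_{\Delta}(z)|\ge \gamma\sqrt{\log(n\Delta)/(n\Delta)}$; combined with $|\hat\phi_{\Delta,n}(z)-\phi_{\Delta}(z)|\le \zeta\sqrt{\log(n\Delta)/(n\Delta)}$ on $\Omega_{\zeta,\Delta}(m)$, this yields $|g(z)-1|\le \zeta/\gamma<1$ and $|\hat\phi_{\Delta,n}(z)|\ge (\gamma-\zeta)\sqrt{\log(n\Delta)/(n\Delta)}>0$. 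Hence $g$ maps the segment into the disk $D:=\{w\in\C:|w-1|<1\}$, which avoids $(-\infty,0]$, and neither $\phi_{\Delta}$ nor $\hat\phi_{\Delta,n}$ vanishes along it.

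Next I would use the definition of $\Log$ from \eqref{eq:Ff}, the normalisations $\phi_{\Delta}(0)=\hat\phi_{\Delta,n}(0)=1$, and the quotient rule to write
\[
\Log(\hat\phi_{\Delta,n}(u))-\Log(\phi_{\Delta}(u))=\int_{0}^{u}\Big(\frac{\hat\phi_{\Delta,n}'(z)}{\hat\phi_{\Delta,n}(z)}-\frac{\phi_{\Delta}'(z)}{\phi_{\Delta}(z)}\Big)\d z=\int_{0}^{u}\frac{g'(z)}{g(z)}\d z .
\]
Since $g$ takes values in $D$ along the path and the principal logarithm is a holomorphic primitive of $w\mapsto 1/w$ on $D$ with $\log 1=0$, the fundamental theorem of calculus identifies this integral with $\log(g(u))=\log(1+w)$, where $w:=(\hat\phi_{\Delta,n}(u)-\phi_{\Delta}(u))/\phi_{\Delta}(u)$ satisfies $|w|\le \zeta/\gamma<1$.

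Finally I would conclude with the scalar estimate: from $\log(1+w)=\int_{0}^{1}w/(1+tw)\,\d t$ and $|1+tw|\ge 1-t|w|$ one gets $|\log(1+w)|\le -\log(1-|w|)$, and since $s\mapsto -\log(1-s)/s=\sum_{k\ge 1}s^{k-1}/k$ is nondecreasing on $(0,1)$, taking $s=|w|\le \zeta/\gamma$ gives
\[
|\log(1+w)|\le -\log(1-|w|)\le \frac{-\log(1-\zeta/\gamma)}{\zeta/\gamma}\,|w|=\frac{\gamma}{\zeta}\log\Big(\frac{\gamma}{\gamma-\zeta}\Big)\frac{|\hat\phi_{\Delta,n}(u)-\phi_{\Delta}(u)|}{|\phi_{\Delta}(u)|},
\]
which is the asserted bound on $\{|u|\le M_{n,\Delta}^{(\gamma)}\wedge m\}\cap\Omega_{\zeta,\Delta}(m)$.

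The step I expect to be the genuine obstacle is the identification $\int_0^u g'/g=\log(g(u))$ in the second paragraph: a priori the distinguished logarithm of the quotient could differ from the principal branch by a nonzero integer multiple of $2\pi i$, and excluding this is precisely what the disk confinement of the first paragraph buys. Everything else is routine; one should only keep in mind that finiteness of $\E[X_1^2]$ (or $\E[X_1^4]$, as needed later) guarantees that $\phi_{\Delta}$ is continuously differentiable so the integrands above are meaningful, and that $\phi_{\Delta}$ never vanishes because $Z$ is compound Poisson.
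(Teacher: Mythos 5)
Your proof is correct and follows essentially the same route as the paper: both hinge on the event-based bounds $|\phi_\Delta(u)|\geq\gamma\sqrt{\log(n\Delta)/(n\Delta)}$ and $|\hat\phi_{\Delta,n}(u)-\phi_\Delta(u)|\leq\frac{\zeta}{\gamma}|\phi_\Delta(u)|$, and both bound $\int_0^u\bigl(\hat\phi_{\Delta,n}'/\hat\phi_{\Delta,n}-\phi_\Delta'/\phi_\Delta\bigr)$ by the logarithmic series, yielding the constant $\frac{\gamma}{\zeta}\log\frac{\gamma}{\gamma-\zeta}$. The only difference is cosmetic: you integrate first (identifying the integral with the principal $\log$ of the ratio via disk confinement) and then use the scalar estimate $|\log(1+w)|\leq-\log(1-|w|)$ with monotonicity of $-\log(1-s)/s$, whereas the paper expands the integrand in a Neumann series and integrates term by term, which amounts to the same power-series bound.
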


\begin{proof}[Proof of Lemma \ref{lem DL}]
First note that for $|u|\leq M_{n,\Delta}^{\gamma} ,$ the ratio $\frac{\phi'_{\Delta}}{\phi_{\Delta}}$ is well defined. Moreover, on the event $\Omega_{\zeta,\Delta}(m)$ then we have that $$|\hat\phi_{\Delta,n}(u)|\geq|\phi_{\Delta}(u)|-|\hat\phi_{\Delta,n}(u)-\phi_{\Delta}(u)|\geq (\gamma-\zeta)\sqrt{\frac{\log(n\Delta)}{n\Delta}}>0,\quad \forall |u|\leq m.$$ Then, the quantity $\frac{\hat\phi'_{\Delta,n}}{\hat\phi_{\Delta,n}}$ is also well defined if $\gamma>\zeta$. For $v\in \R$, notice that \begin{align}
\frac{\dephi_{\Delta,n}(v)}{\ephi_{\Delta,n}(v)}  -\frac{\dphi_{\Delta}(v)}{\phi_{\Delta}(v)} \label{eq:der}=& \dfrac{   \Big(- \frac{(\ephi_{\Delta,n}(v)-\phi_{\Delta}(v))}{\phi_{\Delta}(v)}   \Big)' }{\Big(1- \frac{(\ephi_{\Delta,n}(v)-\phi_{\Delta}(v))}{\phi_{\Delta}(v)}    \Big)   }.
\end{align} On the event $\Omega_{\zeta,\Delta}(m)$, it holds $\forall u\in [-m\wedge  M_{n,\Delta}^{\gamma} , m \wedge M_{n,\Delta}^{\gamma} ] $
\begin{equation}  \label{Bm}
|\hat{\phi}_{\Delta,n} (u) -  \phi_{\Delta}(u) | \leq \zeta \sqrt{\log (n\Delta)} (n\Delta)^{-\frac{1}{2} } \leq \frac{\zeta}{\gamma}  |\phi_{\Delta}(u) |, 
\end{equation} where $\gamma>\zeta$.
Then, a  Neumann series expansion, with \eqref{eq:der} and $\eqref{Bm}$ gives for $|v|\leq m\wedge  M_{n,\Delta}^{\gamma} $, 
\begin{align*}
  \frac{\dephi_{\Delta,n}(v)}{\ephi_{\Delta,n}(v)}  -\frac{\dphi_{\Delta}(v)}{\phi_{\Delta}(v)}  =- \sum_{\ell=0}^{\infty} \Big( \frac{\ephi_{\Delta,n}(v)-\phi_{\Delta}(v)}{\phi_{\Delta}(v)}   \Big)' \Big( \frac{\ephi_{\Delta,n}(v)-\phi_{\Delta}(v)}{\phi_{\Delta}(v)}    \Big)^{\ell},
\end{align*}  
where
\begin{align*}   \Big( \frac{\ephi(v)-\phi_{\Delta}(v)}{\phi_{\Delta}(v)}   \Big)' \Big( \frac{\ephi_{\Delta,n}(v)-\phi_{\Delta}(v)}{\phi_{\Delta}(v)}    \Big)^{\ell} = \frac{1}{\ell +1} \Big[ \Big( \frac{\ephi_{\Delta,n}(v)-\phi_{\Delta}(v)}{\phi_{\Delta}(v)}    \Big)^{\ell+1}\Big]'.
\end{align*}
Using $\hat{\phi}_{\Delta}(0)- \phi_{\Delta}(0)=0$ and \eqref{Bm}, we get 
\begin{align}
 \mathds{1}_{|u|\leq m\wedge M_{n,\Delta}^{\gamma} ,\Omega_{\zeta,\Delta}(m)}\Big| \int_{0}^{u} \Big( \frac{\dephi_{\Delta,n}(v)}{\ephi_{\Delta,n}(v)}  -\frac{\dphi_{\Delta}(v)}{\phi_{\Delta}(v)}   \Big) \d v  \Big|&
\leq \sum_{\ell=0}^{\infty} \frac{1}{\ell+1}   \frac{|\hat{\phi}_{\Delta,n}(u) - \phi_{\Delta}(u) |^{\ell +1}  }{|\phi_{\Delta}(u)|^{\ell+1}  }\nonumber\\
 &\leq   \frac{|\hat{\phi}_{\Delta,n}(u) - \phi_{\Delta}(u) |}{|\phi_{\Delta}(u)|  }  \sum_{\ell=0}^{\infty} \frac{\big(\zeta/\gamma\big)^{\ell} }{\ell+1}  \nonumber\\&= \frac{\gamma}{\zeta} \log\big(\frac{\gamma}{\gamma-\zeta}\big)\frac{|\hat{\phi}_{\Delta,n}(u) - \phi_{\Delta}(u) |}{|\phi_{\Delta}(u)|  } , \label{eq lemDL}
\end{align}  
which completes the proof.
\end{proof}

\subsubsection{Proof of Theorem \ref{thm Poisson}}
We have the decomposition $$\| \widehat{f}_{m,\Delta}-f\|^2=\|f_m-f\|^2+\| \widehat{f}_{m,\Delta}-f_m\|^2 = \|f_m-f\|^2+\frac{1}{2\pi}\int_{- m}^{ m} |\widetilde\phi_{n}(u)-\phi(u)|^2\d u.$$ Let $\gamma>\zeta$, we decompose the second term on the events $\{m\leq M_{n,\Delta}^{\gamma} \}$ and $\Omega_{\zeta,\Delta}(m)$ of  Lemma \ref{lem DL},\begin{align*}
\intm |\widetilde\phi_{n}(u)-\phi(u)|^2\d u&=\int_{-m\wedge M_{n,\Delta}^{\gamma} }^{m\wedge M_{n,\Delta}^{\gamma} }\mathds{1}_{\Omega_{\zeta,\Delta}(m)}|\widetilde\phi_{n}(u)-\phi(u)|^2\d u\\&\quad+\mathds{1}_{m> M_{n,\Delta}^{\gamma},\Omega_{\zeta,\Delta}(m)}\int_{|u|\in [M_{n,\Delta}^{\gamma} ,m]}|\widetilde\phi_{n}(u)-\phi(u)|^2\d u\\&\quad+\mathds{1}_{\Omega_{\zeta,\Delta}(m)^{c}}\intm|\widetilde\phi_{n}(u)-\phi(u)|^2\d u\\
:&= T_{1,n}+T_{2,n}+T_{3,n}.
\end{align*}
Fix $\gamma_{\Delta}=\frac{2\zeta}{1\wedge\Delta}>\zeta$. On the event $\{|u|\leq m\wedge M_{n,\Delta}^{\gamma_{\Delta}},\Omega_{\zeta,\Delta}(m)\}$, Lemma \ref{lem DL} and equations \eqref{eq:Ff}, \eqref{eq fhatstar1}  and \eqref{eq lemDL}, along with \eqref{Bm}, imply \[\big|\hat\phi_{n}(u)\big|\leq 1+\frac{|\Log(\hat\phi_{\Delta,n}(u))-\Log(\phi_{\Delta}(u))|+|\Log(\phi_{\Delta}(u))|}{\Delta}\leq 3+\frac{1}{\Delta}\log\Big(\frac{\gamma}{\gamma-\zeta}\Big)\leq 4,\]
 consequently $\widetilde\phi_{n}(u)=\hat\phi_{n}(u)$. Then, we get from Lemma \ref{lem DL} and the definition of $\gamma_{\Delta}$, that
\begin{align*}\E[T_{1,n}]&=\frac{1}{\Delta^{2}}\int_{-m\wedge  M_{n,\Delta}^{\gamma_{\Delta}} }^{m\wedge  M_{n,\Delta}^{\gamma_{\Delta}} }\E\Big[\mathds{1}_{\Omega_{\zeta,\Delta}(m)}\big|\Log(\hat\phi_{\Delta,n}(u))-\Log(\phi_{\Delta}(u))\big|^{2}\Big]\d u\\ & 
\leq \frac{1}{\Delta^{2}}\int_{-m}^{m}\frac{\E\big[|\hat\phi_{\Delta,n}(u)-\phi_{\Delta}(u)|^{2}\big]}{|\phi_{\Delta}(u)|^{2}}\d u.
\end{align*} Direct computations together with the Lévy-Kintchine formula lead to
\[\E\big[|\hat\phi_{\Delta,n}(u)-\phi_{\Delta}(u)|^{2}\big]=\frac{1-|\phi_{\Delta}(u)|^{2}}{n}\leq \frac{\big(2\Delta|\mbox{Re}(\phi(u))-1|\big)\wedge 1}{n}\leq \frac{2\Delta\wedge 1}{n}.\]
We derive that \[
\E[T_{1,n}]\leq \frac{2\Delta\wedge 1}{n\Delta^{2}}\intm\frac{\d u}{|\phi_{\Delta}(u)|^{2}}\leq \frac{2}{n\Delta}\intm\frac{\d u}{|\phi_{\Delta}(u)|^{2}}.\]
Next, fix $\zeta>\sqrt{5\Delta}$, Lemma \ref{lem Omega} with $\eta=2$ gives
\begin{align*}
\E[T_{3,n}]&\leq 2 \ 5^{2}m\Big( \frac{\E[X_{1}^{2}]}{n\Delta}+4\frac{m}{(n\Delta)^{2}}\Big).\end{align*} Moreover, using  $|\phi_{\Delta}(u)|\geq e^{-2\Delta}$, $\forall u\in\R$ together with the constraint $\Delta\leq \delta\log(n\Delta)$, $\delta<\frac{1}{4}$, we get \[|\phi_{\Delta}(u)|\geq (n\Delta)^{-2\delta}>\gamma_{\Delta}\sqrt{\log(n\Delta)/(n\Delta)}, \quad\forall u\in \R.\] Finally, $M_{n,\Delta}^{\gamma_{\Delta}}=+\infty$, $\forall \zeta>0$ and $T_{2,n}=0$ almost surely. Gathering all terms completes the proof. \hfill$\Box$

\subsection{Proof of Theorem \ref{thm:AP}}
Let $m\geq 0$ be fixed. Consider the event  $\mathcal{E}=\{\hat{m}_{n} <m\}$, on this event we control the surplus in the bias of the estimator $\tilde f_{\hat m_{n}}$. Using the inequality 
$
|\phi|^2   \leq 2 {|\tilde{\phi}_{n}|^2}
+ 2{ |\phi- \tilde{\phi}_{n}|^2},
$
along with the definition of $\hat{m}_{n}$ and Theorem \ref{thm Poisson}, gives 
\begin{align}
 \E\Big[\mathds{1}_{\mathcal{E}} \hspace{-0.5cm}  \int_{|u|\in[\hat{m}_{n},m]} \limits\hspace{-0.5cm}  |\phi(u)|^2 \d u  \Big]
&\leq 2 \E\Big[\mathds{1}_{\mathcal{E}}  \hspace{-0.5cm}  \int_{|u|\in[\hat{m}_{n},m]} \limits\hspace{-0.5cm}  \frac{  \kappa_{n,\Delta}^2}{n\Delta} \d u  \Big] + 2  \int_{-m}^{m}
  \E[ |\tilde{\phi}_{n}(u)- \phi(u)|^2 ]  \d u\nonumber  \\
&\leq  4\frac{\kappa_{n,\Delta}^{2}m}{n\Delta} +\frac{4}{{n\Delta}}\int_{-m}^{m}\frac{\d u}{|\phi_{\Delta}(u)|^{2}}+2^{2} 5^{2}\E[X_{1}^{2}]\frac{m}{n\Delta}+2^{4}5^{2} \frac{{m}^{2}}{(n\Delta)^{2}}.\nonumber
\end{align}
Recall that $\kappa_{n,\Delta}={e^{2\Delta}+\kappa\sqrt{\log(n\Delta)}}$ together with Theorem \ref{thm Poisson}, this implies immediately that, on the event $\mathcal{ E}$, for a positive constant $C$ depending on the choice of $\kappa$ and $\E[X_{1}^{2}]$, 
\[
\E[  \|\overline{f}_{\hat{m}_{n},\Delta }- f\|^2 \mathds{1}_\mathcal{E}  ] \leq   \|{f}_{{m} }- f\|^2+ C \frac{{\log(n\Delta)}m}{n\Delta} +\frac{5}{{n\Delta}}\int_{-m}^{m}\frac{\d u}{|\phi_{\Delta}(u)|^{2}}+2^{5}5^{2} \frac{{m}^{2}}{(n\Delta)^{2}} .
\]Second,  consider the complement set $\mathcal{E}^c$, where we control the surplus in the variance of $\tilde f_{\hat m_{n}}$.  By the definition of $\hat{m}_{n}$, it holds
\[
 \E  \Big[  \hspace{-0.5cm} \int_{|u|\in[m,\hat{m}_{n}] }\limits  \hspace{-0.5cm}  |\overline{\phi}_{n}(u)- \phi(u)|^2  \d u  \mathds{1}_{\mathcal{E}^c}  \Big]
  \leq   \int_{|u|\in[m,(n\Delta)^\alpha] }\limits   \hspace{-0.5cm} { \E[|\overline{\phi}_{n}(u)- \phi(u)|^2]}\d u.
\]  Let $\eta>2$, such that $\alpha-\eta<-1$, and $\zeta>\sqrt{\Delta(1+2\eta)}$ and $\gamma_{\Delta}$ as in the proof of Theorem \ref{thm Poisson} (leading to $M_{n,\Delta}^{\gamma_{\Delta}}=+\infty$). Then, Lemmas \ref{lem Omega} (decomposing on $\Omega_{\zeta,\Delta}((n\Delta)^\alpha)$) and \ref{lem DL} lead to \[ \E[|\overline{\phi}_{n}(u)- \phi(u)|^2]\leq|\phi(u)|^{2}+ \E[|\hat{\phi}_{n}(u)- \phi(u)|^2] \leq|\phi(u)|^{2}+\frac{2}{n\Delta|\phi_{\Delta}(u)|^{2}}+\frac{\E[X_{1}^{2}]}{n\Delta}+4\frac{1}{n\Delta}. \]
First, on the event $\{ |\phi(u)| > e^{2\Delta}/\sqrt{n\Delta} \}$, we obtain 
\[ \E[|\overline{\phi}_{n}(u)- \phi(u)|^2]\leq|\phi(u)|^{2}\big(6+\E[X_{1}^{2}]\big). \]
Consequently, define $C_{0}:=6+\E[X_{1}^{2}]$, then,
\begin{align*}
 \E  \Big[\hspace{-0.5cm} \int_{|u|\in[m,\hat{m}_{n}] }\limits  \hspace{-0.5cm}  |\overline{\phi}_{n}(u)- \phi(u)|^2  \,  \mathds{1}_{ \{|\phi(u)| > e^{2\Delta}/\sqrt{n\Delta}\}}  \d u \,  \mathds{1}_{\mathcal{E}^c}  \Big]   \leq C_{0} \hspace{-0.5cm}\int_{[-m,m]^{c}} \limits|\phi(u)|^2 \d u. 
\end{align*}
Next, using that $|\overline{\phi}_{n}(u)|\leq 4$ and the definition of $\hat m_{n}$, we derive that \begin{align*}
 \E  \Big[\hspace{-0.5cm} \int_{|u|\in[m,\hat{m}_{n}] }\limits  \hspace{-0.5cm}  |\overline{\phi}_{n}(u)&- \phi(u)|^2  \,  \mathds{1}_{ \{|\phi(u)| \leq e^{2\Delta}/\sqrt{n\Delta}\}}  \d u \,  \mathds{1}_{\mathcal{E}^c}  \Big]    \\
\leq &   \hspace{-0.5cm} \int_{|u|\in[m,(n\Delta)^\alpha] }\limits   \hspace{-0.5cm} |\phi(u)|^2\d u  + 5^{2} \hspace{-0.5cm} \int_{|u|\in[m,(n\Delta)^\alpha] }\limits  \hspace{-0.5cm}\PP\big(|\hat \phi_{n}(u)|\geq \kappa_{n,\Delta}/\sqrt{n\Delta}\big)\mathds{1}_{\{|\phi(u)|\leq e^{2\Delta}/\sqrt{n\Delta}\}}  \d u  \\
\leq &  \hspace{-0.5cm}  \int_{|u|\in[m,(n\Delta)^\alpha] }\limits  \hspace{-0.5cm}  |\phi(u)|^2 \d u  + 5^{2}\hspace{-0.5cm} \int_{|u|\in[m,(n\Delta)^\alpha] }\limits \hspace{-0.5cm} \PP\big( |\hat\phi_{n}(u)- \phi(u)| > \kappa \sqrt{\log (n\Delta)/(n\Delta)}  )   \d u\\   \leq& \int_{u\in[m,m]^{c} }\limits  |\phi(u)|^2 \d u +  5^{2}T_{n}. 
 \end{align*} Finally, we give a bound for $T_{n}$ using Lemmas \ref{lem Omega} and \ref{lem DL} with $\gamma_{\Delta}$ and $\zeta>0$ as above,
 \begin{align*}
\PP\Big( |\hat{\phi}_{n}(u)-\phi(u)| \geq {\kappa}&\sqrt{\frac{\log (n\Delta)}{n\Delta}}\Big)
 =   \PP\Big( |\Log(\hat{\phi}_{\Delta,n}(u))-\Log(\phi_{\Delta}(u))| \geq {\kappa\Delta}\sqrt{\frac{\log (n\Delta)}{n\Delta}}\Big)\\
&\leq  \PP\Big( |\hat{\phi}_{\Delta,n}(u) - \phi_\Delta(u)|\geq |\phi_{\Delta}(u)|{\kappa\Delta}\sqrt{\frac{\log (n\Delta)}{n\Delta}}\Big)+\PP\big(\Omega_{\zeta,\Delta}^{c}((n\Delta)^\alpha)\big).
\end{align*}Define $c({\Delta}):={\kappa{\Delta} e^{-2\Delta}}$, then, we derive from the Hoeffding inequality and Lemma \ref{lem Omega} that
\[
\PP\Big( |\hat{\phi}_{n}(u)-\phi(u)| \geq {\kappa}\sqrt{\frac{\log (n\Delta)}{n\Delta}}\Big)\leq {2}{(n\Delta)}^{-c({\Delta})^{2}}+\frac{C}{(n\Delta)^{2}}+4(n\Delta)^{\alpha-\eta} ,
\] where $C$ depends on $\E[X_{1}^{2}]$ and $\E[X_{1}^{4}]$.
Fix $\eta>3$, such that $2\alpha-\eta<-1$ and $\zeta>\sqrt{\Delta(1+2\eta)}$, it follows that
\[
T_{n}\leq {4}{(n\Delta)}^{\alpha-c({\Delta})^{2}}+\frac{C^{\prime}}{n\Delta},
\]where $C^{\prime}$ depends on $\E[X_{1}^{2}]$ and $\E[X_{1}^{4}]$.
Putting the above together, we have shown that for a positive constant $C_{1}$, depending on $\kappa$, $\E[X_{1}^{2}]$ and $\E[X_{1}^{4}]$, and $C_2$ a constant depending on  $\E[X_{1}^{2}]$ and $\E[X_{1}^{4}]$
\begin{align*}
\E[  \| \overline{f}_{\hat m_{n},\Delta}- f\|^2  ]  \leq   C_{1}&\Big( \|{f}_{{m} }- f\|^2+  \frac{\log(n\Delta)m}{n\Delta} +\frac{1}{{n\Delta}}\int_{-m}^{m}\frac{\d u}{|\phi_{\Delta}(u)|^{2}}+ \frac{{m}^{2}}{(n\Delta)^{2}}\Big)\\&+C_{2}\Big(\frac{1}{n\Delta}+(n\Delta)^{\alpha-c(\Delta)^{2}}\Big).   
\end{align*} Taking the infimum in $m$ completes the proof.   \hfill$ \Box$

\printbibliography

\end{document}